\documentclass[10pt,a4paper]{article}
\usepackage[numbers,sort&compress]{natbib}
\usepackage[T1]{fontenc}
\usepackage[utf8]{inputenc}
\usepackage{authblk}
\usepackage{amsmath,amssymb,amsthm,esint,bm}
\usepackage{mathrsfs}
\usepackage{bookmark}
\usepackage{amsmath}
\usepackage{enumitem}
\allowdisplaybreaks[4]

\newtheorem{definition}{Definition}[section]
\newtheorem{theorem}[definition]{Theorem}
\newtheorem{lemma}[definition]{Lemma}
\newtheorem{proposition}[definition]{Proposition}

\theoremstyle{remark}
\newtheorem{remark}[definition]{Remark}
\numberwithin{equation}{section}

\newcommand{\abs}[1]{\lvert#1\rvert}
\newcommand{\Abs}[1]{\left\lvert#1\right\rvert}

\newcommand{\rn}{{\mathbb{R}^d}}

\allowdisplaybreaks[4]

\title{Regularity for degenerate/singular normalized $p$-Laplacian equations with Hamiltonian terms}

\author[a]{Wentao Huo}
\affil[a]{School of Mathematical Sciences, Nankai University, Tianjin 300071, P.R. China}
\date{\today}

\usepackage{hyperref}
\begin{document}
\maketitle
\footnotetext[1]{E-mail: huowentaoouc@163.com (W. Huo). 
}

\begin{abstract}
This paper focuses on the regularity of viscosity solutions to normalized $p$-Laplacian equations with variable-exponent double phase type degeneracy/singularity and Hamiltonian terms. Based on a new improved oscillation-type estimate combined with a localized analysis, we establish sharp interior $C^{1,\alpha}$ regularity estimates in a unified way. In addition, in the degenerate case, we obtain improved gradient H\"{o}lder regularity results at points where the Hamiltonian coefficient and source term vanish, and establish a Schauder-type estimate at local extrema. Notably, our results are still novel even restricted to single power-type singularity or degeneracy law.

Mathematics Subject classification (2020):  35B65; 35J92; 35J70; 35J75; 35D40.

Keywords: Sharp regularity; degenerate/singular quasilinear equations; viscosity solution; Hamiltonian terms. \\

\end{abstract}


\section{Introduction}\label{section1}
In this paper, we are concerned with the local regularity of viscosity solutions to the following quasilinear elliptic equations with nonhomogeneous degeneracy/singularity and Hamiltonian term
\begin{equation}\label{55mainmodel}
-\Phi({Du}, x)\Delta_{p}^{N} u+\mathcal{H}({Du}, x)=f(x) \quad  \text{in} \quad B_{1},
\end{equation}
where $B_{1} \subset \mathbb{R}^d$ $(d\geq 2)$, $p\in(1,\infty)$, the source term $f\in C({B_{1}}) \cap L^{\infty}(B_{1})$, and $\Delta_{p}^{N}$ denotes normalized or game-theoretic $p$-Laplacian, defined by
\begin{equation*}
	\Delta_{p}^{N}  u:=|Du|^{2-p}\Delta_{p} u
	=\Delta u+\left\langle D^2 u \frac{Du}{|Du|},\frac{Du}{|Du|} \right\rangle.
\end{equation*}
The function $\Phi:\rn\times B_{1}\rightarrow [0,\infty)$ enjoys an appropriate variable degeneracy/singularity law, that is,
\begin{equation}\label{12}
	K_{1}\Upsilon(|\xi|,x)\leq	{\Phi}(\xi,x) \leq K_{2} \Upsilon(|\xi|,x)
\end{equation}
for constants $0<K_{1}\leq K_{2}<\infty$,
where
\begin{equation}\label{13}
	\Upsilon(|\xi|,x):=|\xi|^{p(x)}+{a}(x)|\xi|^{q(x)},\quad (\xi,x)\in \rn\times B_{1}
\end{equation}
with functions $p(\cdot),q(\cdot)\in C(B_{1})$ and a modulating function $a(\cdot)$ fulfilling
\begin{equation}\label{14}
	-1<p_{\rm min}\leq p(x)\leq q(x)\leq q_{\rm max}<\infty\quad {\rm and}\quad 0\leq a(\cdot)\in C({B_{1}}).
\end{equation}
The Hamiltonian term $\mathcal{H}:\mathbb{R}^{d} \times B_{1}\rightarrow \mathbb{R}$ is continuous and there exist constants $\mathcal{K},\mathcal{M}>0$ and $0<m\leq 1+p_{\rm min}$ such
that
\begin{equation}\label{15}
	|\mathcal{H}(\xi,x)|\leq \mathcal{K}+\mathcal{M}|\xi|^{m}
\end{equation}
for every $\xi\in\mathbb{R}^{d}$, $x\in B_{1}$. 

Over the past years, the normalized $p$-Laplacian operator has attracted increasing attention, partly because of its application in image processing \cite{Elmoataz2015SIAM,Does2011CPAA,Kawohl2008AMS} and in the description of tug-of-war games \cite{Manfredi2010SIAMJMA,Peres2008Duke,Peres2009JAMS}. We remark that the normalized $p$-Laplacian can be regarded as the one-homogeneous version of standard $p$-Laplacian or as a 
mixture of the Laplacian and normalized infinity Laplacian, $\Delta_{\infty} u:=\left\langle D^2 u \frac{Du}{|Du|},\frac{Du}{|Du|} \right\rangle$. 
The lower regularity for solutions of the homogeneous or nonhomogeneous elliptic normalized $p$-Laplace equation was obtained in \cite{Luiro2013CPDE,Ruosteenoja2016Advcalc}. The ﬁrst contribution on the $C^{1,\alpha}$-regularity for $-\Delta_{p}^{N} u=f(x)$ is due to the seminal work of Attouchi–Parviainen–Ruosteenoja \cite{Attouchi2017JMPA}, where they showed that the solutions are locally $C^{1,\alpha}$ regular 
under the condition that $f\in L^{q}$ with $q\leq \infty$ possessing a suitably large lower bound; see also \cite{Banerjee2020CCM} for the case that $f\in L(d,1)$, where $L(d,1)$ denotes the standard Lorentz space. 

Mathematical models like \eqref{55mainmodel} are also strongly inspired by certain variational integrals of the calculus of variations with nonstandard growth satisfying a sort of double-phase structure \cite{Marcellini1989,Mingione2015,Mingione201522,Zhikov1993,Zhikov1995}. One of the principal features of model \eqref{55mainmodel} is its interplay between two distinct types of degeneracy/singularity rate, in accordance with the values of the modulating function. Observe that \eqref{55mainmodel} is a 
natural extension of canonical quasilinear elliptic equations with singularity/degeneracy, whose highly celebrated prototype is
\begin{equation}\label{eq1}
	-|Du|^{\gamma}\Delta_{p}^{N} u=f(x) \quad  \text{in} \quad B_{1}.
\end{equation}
In the restricted case $p\geq 2$ and $-1<\gamma\leq 0$, Birindelli and Demengel \cite{Birindelli2010JDE} showed the local H\"{o}lder regularity of the gradient for solutions to \eqref{eq1} by using approximations and a ﬁxed point argument. Afterwards, Attouchi \cite{Attouchi2018JDE} extends such result to the full range $p>1$ and $\gamma>-1$. A generalisation of the degeneracy law $\xi\rightarrow |\xi|^{\gamma}$ appeared in \cite{Santos2025AdvCalc}. In that paper, Andrade et al. considered viscosity solutions to degenerate fully nonlinear equations of the form
$$-\sigma(|Du|)\Delta_{p}^{N} u=f(x) \quad  \text{in} \quad B_{1}$$
under the assumptions that $\sigma:[0,\infty)\rightarrow [0,\infty)$ is a general modulus of continuity whose inverse $\sigma^{-1}$ is Dini continuous, and $f\in L^{\infty}(B_{1})$. In this setting, they proved that solutions are locally of class $C^{1}$. In the context of normalized $p$-Laplacian elliptic equations with nonhomogeneous variable exponent degeneracy, Wang et al. \cite{Jiang2025Potential} considered equations of the form
\begin{equation*}
	-\bigg[|Du|^{p(x,u)}+a(x)|Du|^{q(x,u)}\bigg]\Delta_{p}^{N} u=f(x) \quad  \text{in} \quad B_{1}
\end{equation*}
for $0<p_{1}\leq p(x,u)\leq q(x,u)\leq p_{2}<\infty$, $0 \leq a(\cdot)\in C(B_{1})$ and $f\in L^{\infty}(B_{1})$. Under such conditions, the local $C^{1,\alpha}$
regularity of viscosity solutions was established. In addition, they also obtain almost optimal pointwise H\"{o}lder regularity of gradient for degenerate free transmission problem related to normalized $p$-Laplacian. Very recently, \cite{Santos2026JDE} considered the general singular/degenerate operator $\Phi$ and derived local optimal $C^{1,\alpha}$ regularity of solutions. For the related regularity results in the fully nonlinear elliptic context, we refer to \cite{Imbert1,Silva2020,Silva2023,Fang,Fili,Baasandorj,Andrade2022,Bronzi2020,Ricarte} and the references therein.

However, there are very few regularity results with regard to the singular or degenerate quasilinear elliptic equations governed by the normalized $p$-Laplacian involving  Hamiltonian terms. To the best of our knowledge, the only systematic study in this direction is due to the recent work \cite{Junior Bessa20025ARXIV}, where they investigated \eqref{55mainmodel} with $\Phi(Du,x)=|Du|^{\gamma}$ and $\mathcal{H}(Du,x)=\left\langle \mathcal{B}(x),Du\right\rangle|Du|^{\gamma}+h(x)|Du|^{m}$. They proved interior $C^{1,\alpha}$ regularity of solutions under the conditions that $\gamma>0$, $\mathcal{B}\in C(\overline{B}_{1}.;\rn)$, $h\in C(\overline{B}_{1})$, $0<m<1+\gamma$, and $f\in C({B}_{1})\cap L^{\infty}(B_{1})$. In addition,  several extra aspects of \eqref{55mainmodel} have been explored as well, such as existence and uniqueness of solutions, the comparison principles, Aleksandrov–Bakelman–Pucci type estimate, the strong maximum principle, the Hopf Lemma, and Liouville-type theorems, please refer to  \cite{Junior Bessa20026ARXIV,Junior Bessa20025ARXIV} and the references therein. For the related regularity results of fully nonlinear elliptic equations, see \cite{Birindelli2014ESAIM,Birindelli2015,Huo2026,B-Demengel2016,Andrade} and the references therein. It should be emphasized that $\Delta_{p}^{N}u$ is gradient dependent and discontinuous, so the results of \cite{Andrade,Huo2026} cannot be applied directly to the  normalized $p$-Laplacian operator. This challenge necessitates the development of a new technique to address the problem effectively.

Inspired by the work mentioned above, we naturally consider  normalized $p$-Laplacian equation with variable-exponent double phase type degeneracy/singularity and Hamiltonian terms of the form \eqref{55mainmodel}. To the best of our knowledge, the sharp gradient H\"{o}lder regularity of viscosity solutions for \eqref{55mainmodel} is unknown in the current literature. Based on a new improved oscillation-type estimate combined with a localized analysis, we establish the sharp $C^{1,\alpha}$ regularity of solutions to \eqref{55mainmodel} in a unified way.

We now state the first main result concerning local H\"{o}lder continuity for gradient of viscosity solutions to \eqref{55mainmodel}.
\begin{theorem}\label{thm1}
	Let $u\in C(B_{1})$ be a viscosity solution of \eqref{55mainmodel} under hypotheses \eqref{12}-\eqref{15}. Then there exists $\alpha^{\prime}=\alpha^{\prime}(d,p,p_{\rm min},q_{\rm max})\in(0,1)$ such that $u\in C_{\rm loc}^{1,\alpha^{\prime}}(B_{1})$ with the following
	estimates:
	\begin{itemize}
		\item [{\rm$({{\rm i}})$}] if $0<m<1+p_{\rm min}$, then
		\begin{equation*}
			\|u\|_{C^{1, \alpha^{\prime}}(B_{1/2})}\leq C\left(1+\|u\|_{L^{\infty}\left(B_{1}\right)}+\left(\|f\|_{L^{\infty}\left(B_{1}\right)}+\mathcal{K}\right)^{\frac{1}{1+p_{\rm min}}}+\mathcal{M}^{\frac{1}{1+p_{\rm min}-m}}\right),
		\end{equation*}
		where the constant $C$ depends on $d,p,\alpha^{\prime},m,p_{\rm min}$;
		\item [{\rm$({{\rm ii}})$}] if $m=1+p_{\rm min}$, then
		\begin{equation*}
			\|u\|_{C^{1, \alpha^{\prime}}(B_{1/2})}\leq C\left(1+\|u\|_{L^{\infty}(B_{1})}\right),
		\end{equation*}
		where the constant $C$ depends in addition on $\|f\|_{L^{\infty}\left(B_{1}\right)}$, $\mathcal{K}$ and $\mathcal{M}$.
	\end{itemize}
\end{theorem}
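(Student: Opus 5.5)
The plan is the standard scheme for degenerate/singular problems: normalize, prove an oscillation-type approximation lemma, and iterate it with a localization that treats separately the regimes of small and large gradient.

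\textbf{Normalization.} Given $x_0\in B_{1/2}$, set
\[
v(x)=\frac{u(x_0+rx)}{\kappa},\qquad \kappa=1+\|u\|_{L^\infty(B_1)}+\big(\|f\|_{L^\infty}+\mathcal K\big)^{\frac{1}{1+p_{\min}}}+\mathcal M^{\frac{1}{1+p_{\min}-m}} .
\]
The normalized $p$-Laplacian is one-homogeneous and $\Upsilon$ is controlled from below by $|\xi|^{p(x)}$ near zero. So, for $r$ small, $v$ solves an equation of the same form with $\|v\|_\infty\le1$ and with $\|\tilde f\|_\infty+\tilde{\mathcal K}+\tilde{\mathcal M}\le\varepsilon_0$, where $\varepsilon_0$ is a smallness parameter still to be chosen. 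The exponent $\frac{1}{1+p_{\min}-m}$ is exactly the one that makes the Hamiltonian coefficient scale correctly when $m<1+p_{\min}$. When $m=1+p_{\min}$ the $\mathcal M$-term is scale invariant, so it cannot be made small by $\kappa$; this is why the constant in case (ii) depends on $\mathcal M$, $\|f\|_\infty$ and $\mathcal K$. In that case I would only make $\mathcal K$ and $f$ small by shrinking $r$, and treat $\mathcal M|\xi|^{1+p_{\min}}\le \mathcal M\,\Upsilon$ as a perturbation absorbed into the degenerate operator.

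\textbf{Compactness and approximation.} First I would show that normalized solutions are uniformly Lipschitz, or at least uniformly Hölder. The tool is an Ishii–Lions doubling argument: where $|Du|$ is large, $\Phi\ge K_1|Du|^{p_{\min}}$ dominates $\mathcal K+\mathcal M|Du|^m$ because $m\le1+p_{\min}$. Next comes the key improved oscillation estimate: for any vector $\xi$, the function $w=v-\xi\cdot x$ solves
\[
-\Phi(Dw+\xi,x)\Delta_p^N(w+\xi\cdot x)+\mathcal H=f,
\]
and if $\varepsilon_0$ is small then $w$ is uniformly close in $B_{1/2}$ to a solution $h$ of $\Delta_p^N h=0$, or of $\Delta_p^N(h+\xi\cdot x)=0$. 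This is proved by contradiction and stability of viscosity solutions. The crucial point is the "cutting" lemma: if $-\Upsilon(|Dw+\xi|,x)\,\Delta_p^N(\cdot)=o(1)$, then the limit solves the homogeneous equation. At points where a test function has gradient $\ne-\xi$ one divides by $\Upsilon$; at the remaining points one uses the Imbert–Silvestre style argument. Both steps must hold uniformly in $\xi$, including large $|\xi|$, and uniformly over the variable exponents. The limit profiles are $C^{1,\beta_p}$ by Attouchi–Parviainen–Ruosteenoja, with $\beta_p=\beta_p(d,p)$.

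\textbf{Iteration and the main obstacle.} Fix $\alpha'<\min\{\beta_p,\frac{1}{1+q_{\max}}\}$ (using $p_{\min}$ in the singular range), so that it depends on $d,p,p_{\min},q_{\max}$. Iterate the approximation lemma to build affine functions $\ell_k$ with
\[
\sup_{B_{\rho^k}}|v-\ell_k|\le\rho^{k(1+\alpha')},\qquad |D\ell_{k+1}-D\ell_k|\le C\rho^{k\alpha'} .
\]
At each step the rescaled function $v_k(x)=\rho^{-k(1+\alpha')}(v-\ell_k)(\rho^kx)$ solves an equation with degeneracy $\Upsilon\big(\rho^{k\alpha'}|Dv_k+\rho^{-k\alpha'}D\ell_k|,\cdot\big)$. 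The main obstacle is showing that the rescaled right-hand side and Hamiltonian remain below $\varepsilon_0$. This is where the localized analysis enters. If $|D\ell_k|\lesssim\rho^{k\alpha'}$, the smallness of $\alpha'$ relative to $1+p_{\min}$ and $1+q_{\max}$ controls the factor $\rho^{k(1-\alpha'(1+\cdot))}$. If instead $|D\ell_k|\ge c\rho^{k\alpha'}$ at some first scale $k_0$, the gradient is bounded away from zero in $B_{\rho^{k_0}}$. There the equation becomes uniformly elliptic, with $\Phi$ comparable to a constant and a bounded Hamiltonian, so classical $C^{1,\beta}$ theory for normalized $p$-Laplacian equations with bounded right-hand side applies. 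Gluing the two regimes yields $|Dv(x)-Dv(x_0)|\le C|x-x_0|^{\alpha'}$. Undoing the normalization by $\kappa$ then gives the estimate of case (i). In case (ii) the $\mathcal M$-term is scale-invariant under the iteration and is kept inside the perturbed operator, which gives the stated dependence of $C$.
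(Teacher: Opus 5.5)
Your plan for the critical case $m=1+p_{\rm min}$ would fail. Normalizing the amplitude by $\kappa$ indeed does not shrink the $\mathcal M$-term, but the spatial dilation does. For $\tilde u(x)=u(\tau x)/K$ with $\tau\le 1\le K$ one gets $\tilde{\mathcal M}\le \tau^{2+p_{\rm min}-m}K^{m-1-p_{\rm min}}\mathcal M$, which equals $\tau\mathcal M$ when $m=1+p_{\rm min}$: a Hamiltonian of critical growth scales like a first-order term. The paper takes $\tau\le\delta/\mathcal M$ and $\tau\le(\delta/(\|f\|_{L^\infty(B_1)}+\mathcal K))^{1/(2+p_{\rm min})}$. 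This choice of radius is why $C$ depends on $\|f\|_{L^\infty(B_1)}$, $\mathcal K$, $\mathcal M$ in (ii).

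Your substitute does not work. The bound $\mathcal M|\xi|^{1+p_{\rm min}}\le\mathcal M\Upsilon(|\xi|,x)$ is false (take $a\equiv0$, $p\equiv p_{\rm min}$, $|\xi|>1$). In any case a first-order term cannot be absorbed into the second-order degenerate operator. If $\tilde{\mathcal M}$ stays of order one, the compactness limit in your approximation lemma still carries a critical Hamiltonian. It is then not a solution of a homogeneous normalized $p$-Laplacian-type equation, and you have no a priori $C^{1,\beta}$ bound for it; that bound is exactly what is being proved. The term is also not scale-invariant along the iteration: there $\mathcal M_k=\rho^{k(1-\alpha'(1+q_{\rm max}-m))}\mathcal M$, which decays since $\alpha'(q_{\rm max}-p_{\rm min})<1$.

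For case (i), your stopping-time dichotomy is a different route from the paper's and is legitimate in principle, but its large-slope branch rests on something you only hedge on. The bounds $\sup_{B_{\rho^{k_0}}}|v-\ell_{k_0}|\le\rho^{k_0(1+\alpha')}$ and $|D\ell_{k_0}|\ge c\rho^{k_0\alpha'}$ do not force $|Dv|$ away from zero. You need a universal Lipschitz bound $L$ for the rescaled $v_{k_0}$ on the range $|\xi_{k_0}|\in[c,(c+C)\rho^{-\alpha'}]$, and you must then choose $c\ge2L$. Uniform H\"older bounds do not suffice for this.

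In the degenerate case the paper proves Lipschitz bounds only for $0<m\le p_{\rm min}$ and $|\xi|\ge\mathcal J_0$ (Proposition~\ref{prop3.1}); for $m>p_{\rm min}$ it proves only H\"older bounds (Proposition~\ref{prop3.2}). Your uniformly elliptic step also has a right-hand side depending on $Dv$. So the $C^{1,\alpha}$ result for $-\Delta_p^N u=f(x)$ does not apply verbatim before $Dv$ is known to be continuous.

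The paper avoids the dichotomy altogether. Lemma~\ref{bijin} holds uniformly in $\xi$ under the coupled smallness $\mathcal M(|\xi|^{(m-p_{\rm min})_+}+1)\le\sigma$. With smallness of $\mathcal M$ alone, as you state it, uniformity in large $\xi$ is false for $m>p_{\rm min}$, because $\mathcal H/\Phi$ is of size $\mathcal M|\xi|^{m-p_{\rm min}}$. Along the iteration one has $\mathcal M_k|\xi_k|^{m-p_{\rm min}}\le\mathcal M\rho^{k(1-\alpha'(1+q_{\rm max}-p_{\rm min}))}|b_k|^{m-p_{\rm min}}$. This stays small since $|b_k|\le2C$ and $1-\alpha'(1+q_{\rm max}-p_{\rm min})\ge0$, so the iteration never has to stop.

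Finally, ``using $p_{\rm min}$ in the singular range'' is not a mechanism. The paper first proves Lipschitz bounds for $-1<p_{\rm min}<0$ (Proposition~\ref{jin2}). It then multiplies by $|Du|^{-p_{\rm min}}$ (Proposition~\ref{qiyizhuantuihua}) to reduce to a degenerate problem with exponents shifted by $-p_{\rm min}$.
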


Since the solutions of \eqref{55mainmodel} should not be expected to be more regular than $p$-harmonic functions, the maximal exponent $\alpha_{0}$ (see Remark \ref{ptiaohezhengzexing} below) is a natural upper bound for $C^{1,\alpha}$ regularity for equation \eqref{55mainmodel}. Our next result establishes an optimal geometric estimate of \eqref{55mainmodel}.
\begin{theorem}\label{thm2}
	  Let $u\in C(B_{1})$ be a viscosity solution of \eqref{55mainmodel} under hypotheses \eqref{12}-\eqref{15}.  Then for every $x_{0}\in B_{1/2}$, $u$ is $C^{1,\alpha}$ at $x_{0}$ with 
	\begin{equation}\label{exponent}
			\alpha\in 
		\begin{cases}
				(0,\alpha_{0})\cap \left(0,\frac{1}{1+q_{\rm max}}\right]&  \text{if}\;\;  p_{\rm min}\geq 0, \\
			(0,\alpha_{0})\cap \left(0,\frac{1}{1+q_{\rm max}-p_{\rm min}}\right] & \text{if}\;\;  -1<p_{\rm min}<0.
		\end{cases}
	\end{equation}
More precisely, for all $0<r<\frac{1}{2}$, there holds
\begin{itemize}
	\item [{\rm$({{\rm i}})$}] if $0<m<1+p_{\rm min}$, then
	\begin{equation*}
		\|u\|_{C^{1, \alpha}(B_{r}({x_0}))}\leq C\left(1+\|u\|_{L^{\infty}\left(B_{1}\right)}+\left(\|f\|_{L^{\infty}\left(B_{1}\right)}+\mathcal{K}\right)^{\frac{1}{1+p_{\rm min}}}+\mathcal{M}^{\frac{1}{1+p_{\rm min}-m}}\right),
	\end{equation*}
	where the constant $C$ depends on $d,p,\alpha,m,p_{\rm min}$;
	\item [{\rm$({{\rm ii}})$}] if $m=1+p_{\rm min}$, then
	\begin{equation*}
		\|u\|_{C^{1, \alpha}(B_{r}({x_0}))}\leq C\left(1+\|u\|_{L^{\infty}(B_{1})}\right),
	\end{equation*}
	where the constant $C$ depends in addition on $\|f\|_{L^{\infty}\left(B_{1}\right)}$, $\mathcal{K}$ and $\mathcal{M}$.
\end{itemize}
\end{theorem}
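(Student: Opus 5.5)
We prove Theorem~\ref{thm2} by an iterative geometric flatness argument at a fixed point $x_0\in B_{1/2}$, built on Theorem~\ref{thm1}.

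\emph{Normalization.} After translating $x_0$ to the origin and rescaling $v(x)=u(rx)/\kappa$ with suitable $r,\kappa$, we may assume
\begin{equation*}
\|u\|_{L^\infty(B_1)}\le 1,\qquad \|f\|_{L^\infty}+\mathcal K\le\varepsilon,\qquad \mathcal M\le \varepsilon .
\end{equation*}
The equation keeps its structure under this scaling. The coefficient becomes $\kappa^{p(x)}r^{-p(x)-1}|\xi|^{p(x)}+\dots$, so the degeneracy law~\eqref{12} survives with a new modulating function $\tilde a$. The Hamiltonian bound~\eqref{15} survives with new constants. The choice of $\kappa$ is exactly where the exponents $\frac{1}{1+p_{\rm min}}$ and $\frac{1}{1+p_{\rm min}-m}$ in (i) come from. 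In case (ii), $m=1+p_{\rm min}$, $\mathcal M$ is invariant under this scaling, so smallness must instead come from shrinking $r$; this is why $C$ then depends on $\mathcal K,\mathcal M,f$.

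\emph{Approximation lemma.} Next we prove a compactness statement. For every $\delta>0$ there is $\varepsilon>0$ such that the following holds: if $v$ solves a normalized equation with data as above, $\|v\|_\infty\le1$, and $\xi\in\mathbb R^d$ is an arbitrary vector, then $w(x)=v(x)-\xi\cdot x$ is $\delta$-close in $B_{1/2}$ to some $h$ solving $-\Delta_p^N h=0$. The proof is by contradiction. Theorem~\ref{thm1}, or rather its equicontinuity version, which must hold uniformly in $\xi$, gives compactness. We pass to the limit in the equation
\begin{equation*}
-\Phi(Dw+\xi,x)\Delta_p^N(w+\xi\cdot x)+\mathcal H=f .
\end{equation*}
The key point is a cancellation argument. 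Where test functions have $|D\varphi+\xi|$ bounded below, the factor $\Phi\ge K_1|D\varphi+\xi|^{p(x)}$ can be divided out, and we obtain $-\Delta_p^N h=0$ in the limit. Where $|D\varphi+\xi|$ is small, we use the standard trick of \cite{Attouchi2018JDE}: show that the limit is a solution by checking only test functions with nonvanishing gradient, or use the Imbert–Silvestre alternative. This step, uniform in $\xi$ and with unbounded $|\xi|$, is the main obstacle. I would handle it through the improved oscillation estimate: Lipschitz bounds for $w$ independent of $\xi$, obtained by splitting into the cases $|\xi|$ large and $|\xi|$ small.

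\emph{Iteration.} Since $h$ is $C^{1,\alpha_0}$, we have $|h-h(0)-Dh(0)\cdot x|\le C\rho^{1+\alpha_0}$. Fix $\alpha<\alpha_0$ and choose $\rho,\delta$ so that $C\rho^{1+\alpha_0}+\delta\le\rho^{1+\alpha}$. Induction then produces affine functions $\ell_k$ with
\begin{equation*}
\sup_{B_{\rho^k}}|u-\ell_k|\le\rho^{k(1+\alpha)},\qquad |D\ell_{k+1}-D\ell_k|\le C\rho^{k\alpha} .
\end{equation*}
The induction step applies the lemma to
\begin{equation*}
u_k(x)=\frac{u(\rho^kx)-\ell_k(\rho^kx)}{\rho^{k(1+\alpha)}} .
\end{equation*}
The equation for $u_k$ has coefficient $\Phi$ evaluated at $\rho^{k\alpha}Du_k+D\ell_k$ and multiplied by $\rho^{k(1-\alpha)}\rho^{-k}$-type factors. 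The rescaled source is then $\rho^{k(1-\alpha)}f$ divided by $\rho^{k\alpha p(x)}$-type factors from the homogeneity of $\Upsilon$. Requiring these to stay below $\varepsilon$ forces
\begin{equation*}
1-\alpha-\alpha\, q_{\rm max}\ge0 \quad\text{if } p_{\rm min}\ge0,\qquad 1-\alpha(1+q_{\rm max}-p_{\rm min})\ge0 \quad\text{in the singular case}.
\end{equation*}
Here the ratio between the $p(x)$ and $q(x)$ phases must be controlled, since $\tilde a$ picks up a factor $\rho^{k\alpha(q-p)}$. These are exactly the restrictions in~\eqref{exponent}. The Hamiltonian term scales similarly, using $m\le1+p_{\rm min}$.

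\emph{Conclusion.} Summing the geometric series gives convergence $D\ell_k\to Du(x_0)$ and the pointwise estimate $|u-\ell_\infty|\le Cr^{1+\alpha}$. Combining this with Theorem~\ref{thm1} away from $x_0$ and undoing the normalization yields the stated bounds.
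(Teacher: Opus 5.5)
Your argument breaks at the step ``since $h$ is $C^{1,\alpha_0}$''. The function you feed into the compactness lemma is $u_k$, which solves the $\xi_k$-perturbed equation with $\xi_k=\rho^{-k\alpha}D\ell_k$. (It is $u_k$, not $u_k+\xi_k\cdot x$, that is bounded by $1$, so your lemma as stated normalizes the wrong function.) Limits of such functions do not solve $-\Delta_p^N h=0$:
\begin{itemize}
\item If $\xi_j\to\xi_\infty$, they solve $-\Delta h-(p-2)\bigl\langle D^2h\,\tfrac{Dh+\xi_\infty}{|Dh+\xi_\infty|},\tfrac{Dh+\xi_\infty}{|Dh+\xi_\infty|}\bigr\rangle=0$. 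Only $h+\xi_\infty\cdot x$ is $p$-harmonic, and the available bound is $\sup_{B_\rho}|h-h(0)-Dh(0)\cdot x|\le C(1+|\xi_\infty|)\rho^{1+\alpha_0}$.
\item If $|\xi_j|\to\infty$, they solve a constant-coefficient linear equation.
\end{itemize}
Whenever $Du(x_0)\neq0$ you have $|\xi_k|\to\infty$, while $\rho$ must be fixed once and for all before the iteration. So you need a sharp-exponent estimate for this whole limit class that is independent of $\xi$. You flag uniformity in $\xi$ as the main obstacle, but your remedy (Lipschitz/H\"older bounds uniform in $\xi$) only gives equicontinuity, i.e.\ compactness. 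It says nothing about the regularity exponent of the limits. With only the uniform estimate the paper records, Lemma~\ref{111raodongzhengzexing}, your scheme is essentially the paper's Proposition~\ref{prop3.4} built on Lemma~\ref{bijin}, and it stops at the non-sharp range $\alpha'<\beta_0$.

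The missing ingredient can be supplied. By Lemma~\ref{111raodongzhengzexing} (rescaled), every perturbed limit satisfies $|Dh|\le C_0$ and $[Dh]_{C^{0,\beta_1}}\le C_0$ on a smaller ball, uniformly in $\xi$. If $|\xi|\ge 2C_0+2$, then $|Dh+\xi|\ge|\xi|/2$, so $h$ solves a linear uniformly elliptic equation whose coefficients $A(Dh+\xi)$ have universally bounded $C^{0,\beta_1}$ norm; Schauder theory then gives a universal $C^{2,\beta_1}$ bound. If $|\xi|<2C_0+2$, the $p$-harmonic estimate for $h+\xi\cdot x$ is universal. With this lemma added, your one-shot flatness iteration becomes a valid alternative route.

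The paper avoids perturbed limits at the sharp level altogether:
\begin{itemize}
\item It uses Theorem~\ref{thm1} only to get $C^1$-approximation by $p$-harmonic functions (Lemma~\ref{lem5.1}).
\item It iterates an oscillation bound without subtracting affine functions, so no $\xi$ ever appears (Lemma~\ref{lem5.4}: $\sup_{B_r}|u-u(0)|\le C_0r^{1+\alpha}(1+|Du(0)|r^{-\alpha})$).
\item It then splits into $|Du(0)|\le r^\alpha$, which is immediate, and $|Du(0)|>r^\alpha$. In the second case, rescaling at $r_0=|Du(0)|^{1/\alpha}$ makes $|Du_{r_0}|\simeq1$ near $0$, so $-\Delta_p^N u_{r_0}$ equals a bounded function and Lemma~\ref{111tiaohezhengzexing} applies.
\end{itemize}
Both routes rest on the same fact: a large gradient makes the equation non-degenerate. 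The paper applies it to the solution; the repaired version of your argument applies it to the limit profiles.

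For the singular case, the paper does not iterate the singular perturbed equation. It reduces to the degenerate case via Proposition~\ref{qiyizhuantuihua} and the Lipschitz bound of Proposition~\ref{jin2}, and $\frac{1}{1+q_{\rm max}-p_{\rm min}}$ is just $\frac{1}{1+\hat q_{\rm max}}$ for the shifted exponents. A direct singular iteration, as you suggest, would need compactness for the singular equation with $\xi\neq0$, which you do not provide.
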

Due to the generality of degeneracy/singularity, our main 
results Theorems \ref{thm1} and \ref{thm2} above embraces and improves the regularity results previously obtained in \cite{Attouchi2018JDE}.  It is worth emphasizing that establishing sharp gradient H\"{o}lder estimates for normalized $p$-Laplacian equations simultaneously involving double-phase type degeneracy/singularity laws and Hamiltonian terms is highly nontrivial. The key novelty lies in developing a systematic approach to control the growth of the Hamiltonian and providing a new refined analysis adapted to the present framework. In addition, our findings are new even for the case $a(x)\equiv 0$. 

Next, we impose additional conditions on the source term and Hamiltonian term to give higher regularity results of \eqref{55mainmodel} for the degenerate case.  
More precisely, we assume that
\begin{equation}\label{21}
	\mathcal{H}(Du,x)=h(x)|Du|^{m}
\end{equation}
with $0<m\leq 1+p_{\rm min}$ and $h\in C(B_{1})\cap L^{\infty}(B_{1})$. Additionally, we assume 
\begin{equation}\label{22}
	f(0)=h(0)=0,
\end{equation}
and for some $K_{3},K_{4}>0$ and $\theta_{1},\theta_{2}\in (0,1)$, there holds
\begin{equation}\label{23}
	|f(x)|\leq {K}_{3}|x|^{\theta_{1}},\quad |h(x)|\leq {K}_{4}|x|^{\theta_{2}}
	,\quad x\in B_{1}.
\end{equation}
 
The result stated in Theorem \ref{thm2} above is sharp, however, this $C^{1,\alpha}$ regularity result alone does not give a complete description. This raises the question of whether the regularity can be improved, at least at some meaningful points. A first observation  comes from  an important example. For any $\alpha\in (0,1)$, the function $u(x)=|x|^{1+\alpha}$ satisfies
$$|Du|^{\gamma}\Delta_{p}^{N} u=(1+\alpha)^{1+\gamma}(\alpha(p-1)+d-1)|x|^{\alpha(1+\gamma)-1}.$$
Letting $f(x)=C|x|^{\theta_{1}}$ with $0<\theta_{1}:=\alpha(1+\gamma)-1$, one can see that,
$u$ is $C^{1,\frac{1+\theta_{1}}{1+\gamma}}$ at the origin (a critical point of $u$), and thus surpassing the optimal regularity exponent of $\frac{1}{1+\gamma}$. 

Another heuristic analysis comes from a scaling argument. To be precise, consider a simple case of $\Phi(Du,x)=|Du|^{\gamma}$, that is
\begin{equation*}
	-|Du|^{\gamma}\Delta_{p}^{N} u+h(x)|Du|^{m}=f(x) \quad {\rm in}\;\, B_{1}.
\end{equation*}
For any $r\in(0,1)$, define $v(x):=u(rx)/r^{\vartheta}$ for $x\in B_{1}$. It is clear that $v$ solves
\begin{equation*}
	-|Dv|^{\gamma}\Delta_{p}^{N} v+\tilde{h}(x)|Dv|^{m}=\tilde{f}(x) \quad {\rm in}\;\, B_{1},
\end{equation*}
where
\begin{equation*}
	\tilde{h}(x)=r^{2+\gamma-m-\vartheta(1+\gamma-m)}h(rx)\quad {\rm and}\quad \tilde{f}(x)=r^{2+\gamma-\vartheta(1+\gamma)}f(rx).
\end{equation*}
Thus, if we select
\begin{equation*}
	0<\vartheta\leq \min\left\{\frac{2+\gamma+\theta_{1}}{1+\gamma},\frac{2+\gamma-m+\theta_{2}}{1+\gamma-m}\right\},
\end{equation*}
then it follows from $|f(x)|\leq {K}_{3} |x|^{\theta_{1}}$, $|h(x)|\leq {K}_{4} |x|^{\theta_{2}}$ and $r\in (0,1)$ that
\begin{equation*}
	\abs{\tilde{f}(x)}\leq {K}_{3} r^{2+\gamma-\vartheta(1+\gamma)+\theta_{1}}|x|^{\theta_{1}}\leq {K}_{3} |x|^{\theta_{1}},
\end{equation*}
\begin{equation*}
	\abs{\tilde{h}(x)}\leq K_{4} r^{2+\gamma-m-\vartheta(1+\gamma-m)+\theta_{2}}|x|^{\theta_{2}}\leq {K}_{4} |x|^{\theta_{2}},
\end{equation*}
that is, $\tilde{f}$ and $\tilde{h}$ satisfy the same structural assumptions as $f$ and $h$.
Therefore, $v$ satisfies an equation with the same structure as $u$. Given $0<m\leq 1+\gamma$, we find
\begin{equation*}
	\min\left\{\frac{2+\gamma+\theta_{1}}{1+\gamma},\frac{2+\gamma-m+\theta_{2}}{1+\gamma-m}\right\}=1+\min\left\{\frac{1+\theta_{1}}{1+\gamma},\frac{1+\theta_{2}}{1+\gamma-m}\right\}=:1+\varsigma.
\end{equation*}
Hence, the optimal regularity can be expected to be $C^{1,\varsigma}$. In particular, if $\gamma<\min\left\{\theta_{1},\theta_{2}+m\right\}$, then it follows $\varsigma>1$ and so the optimal regularity can be expected to be $C^{2,\varsigma-1}$. Therefore, the H\"{o}lder continuity of source term and Hamiltonian coefficient directly influences, in a quantitative way, the regularity estimates for solutions to degenerate equations. The first breakthrough in this direction was made by Nascimento in \cite{Nascimento2025}, who investigated the degenerate fully nonlinear equation $\abs{Du}^{\gamma}F(D^2 u, x)=f(x)$ and showed a Schauder-type regularity estimate at local extrema by means of a geometric approach. Subsequently, a recent paper \cite{Huo22026} considered the following degenerate fully nonlinear equations with Hamiltonian terms
\begin{equation*}
	\Phi({Du}, x)F(D^2 u, x) +h(x)|Du|^{m}=f(x) \quad  \text{in} \quad B_{1},
\end{equation*}
where $\Phi$ satisfies \eqref{12}-\eqref{14} with $p_{\rm min}\geq 0$. They obtained improved gradient H\"{o}lder regularity results at points where the Hamiltonian coefficients and source terms vanish. Furthermore, they established Hessian continuity at local extrema, which is sharp with respect to the vanishing 
rate of the Hamiltonian coefficient and source term. Motivated by the analysis and work mentioned above, here we aim to establish similar higher regularity estimates for viscosity solutions to \eqref{55mainmodel}. 

The following result is an improved H\"{o}lder regularity of gradient at the origin point, at which the source term $f$ and Hamiltonian coefficient $h$ vanish at a prescribed rate.
\begin{theorem}\label{main}
	Let $u\in C(B_{1})$ be a bounded viscosity solution of \eqref{55mainmodel} under hypotheses \eqref{12}-\eqref{14} and \eqref{21}-\eqref{23} with $p_{\rm min}\geq 0$. Then $u$ is of class $u\in C^{1,\min\left\{\alpha_{0}^{-},\alpha_{1}\right\}}$ at the origin with
	\begin{equation*}
		\alpha_{1}:=
			\min\left\{\frac{1+\theta_{1}}{1+q_{\rm max}},\frac{1+\theta_{2}}{1+q_{\rm max}-m}\right\}
	\end{equation*}
	That is, given any
	\begin{equation}\label{3aerfadefanwei}
		\alpha\in
		(0,\alpha_{0})\cap \left(0,\alpha_{1}\right],
	\end{equation}
	there exists universal constants
	$0<r<\frac{1}{4}$ and $C>0$ such that
	\begin{equation*}
		\Abs{u(x)-u(0)-Du(0)\cdot x}\leq C|x|^{1+\alpha}\quad {\rm for\; all \;} x\in B_{r}.
	\end{equation*}
\end{theorem}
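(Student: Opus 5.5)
We prove Theorem \ref{main} by a geometric iteration at the origin: we approximate $u$ by affine functions at dyadic scales, with errors decaying like $\rho^{k(1+\alpha)}$. We split according to whether $|Du(0)|$ is large compared with the scale. The basic tools are Theorem \ref{thm1}, which gives compactness in $C^{1,\alpha'}$, and the regularity of $p$-harmonic functions, namely $C^{1,\alpha_0}$ for solutions of $-\Delta_p^N w=0$ (Remark \ref{ptiaohezhengzexing}).

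\textbf{Step 1: normalization and approximation lemma.} By scaling $v(x)=u(\tau x)/\kappa$ we may assume $\|u\|_{L^\infty(B_1)}\le 1$, and that $K_3,K_4$ are as small as a prescribed $\delta$. This is possible because $f$ and $h$ carry the factors $\tau^{\theta_1}$ and $\tau^{\theta_2}$. The key approximation lemma reads as follows. Given $\varepsilon>0$, there is $\delta>0$ such that if $v$ solves
\[
-\Phi(Dv+\vec{q},x)\Delta_p^N(v+\vec{q}\cdot x)+\tilde h(x)|Dv+\vec{q}|^m=\tilde f(x)
\]
with $\|v\|_\infty\le1$ and $\|\tilde f\|_\infty+\|\tilde h\|_\infty\le\delta$, for arbitrary $\vec q\in\rn$, then $\|v-w\|_{L^\infty(B_{1/2})}\le\varepsilon$ for some $w$ satisfying $\|w\|_{C^{1,\alpha_0^-}(B_{1/4})}\le C$.

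We prove this by contradiction. Uniform $C^{1,\alpha'}$ bounds for $v$, independent of $\vec q$, come from Theorem \ref{thm1} applied to the shifted problem; here $p_{\min}\ge0$ is used so that these bounds remain valid. Along a contradicting sequence, $v_j\to v_\infty$, and we show that $v_\infty$ solves either $-\Delta_p^N(v_\infty+\vec q_\infty\cdot x)=0$ or a limit of the form $\Delta_p^N$ with a frozen direction. The standard argument (as in Attouchi, and Imbert–Silvestre) handles the degenerate factor. We cancel $\Phi$ when testing with $|D\varphi+\vec q_j|$ bounded away from $0$. Otherwise we use the "cutting" trick with test functions whose gradient vanishes, which forces $D^2\varphi$ to satisfy the limiting inequality anyway. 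In every case the limit is $C^{1,\alpha_0^-}$.

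\textbf{Step 2: iteration.} Fix $\alpha$ as in \eqref{3aerfadefanwei}. We construct affine $\ell_k=a_k+b_k\cdot x$ with
\[
\sup_{B_{\rho^k}}|u-\ell_k|\le\rho^{k(1+\alpha)},\qquad |a_{k+1}-a_k|+\rho^k|b_{k+1}-b_k|\le C\rho^{k(1+\alpha)} .
\]
Given $\ell_k$, we set $v_k(x)=(u-\ell_k)(\rho^kx)/\rho^{k(1+\alpha)}$. Then $v_k$ solves an equation of the Step 1 type with $\vec q=b_k\rho^{-k\alpha}$ and with $\Phi$ rescaled by $\rho^{-k\alpha(\cdot)}$. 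The rescaled data satisfy
\[
|\tilde f_k|\le \rho^{k(1-\alpha)}\rho^{-k\alpha q_{\max}}\rho^{k\theta_1}\,|f|\text{-bound},
\]
and the exponent $1+\theta_1-\alpha(1+q_{\max})$ is $\ge0$ precisely because $\alpha\le\frac{1+\theta_1}{1+q_{\max}}$. Analogously, $\tilde h_k$ stays below $\delta$ precisely because $\alpha\le\frac{1+\theta_2}{1+q_{\max}-m}$. The degeneracy $\Upsilon(|\xi|\rho^{k\alpha},\rho^kx)$ is normalized by dividing by $\rho^{k\alpha p_{\min}}$ (or by the appropriate power). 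The rescaled operator then still satisfies \eqref{12} after renormalizing, and the lower bound stays at least $|\xi|^{p(x)}$ up to a factor $\rho^{k\alpha(p(x)-p_{\min})}\le1$. This is where the worst case must be estimated with $q_{\max}$, which explains the form of $\alpha_1$.

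\textbf{Main obstacle and closing.} Apply Step 1 to $v_k$, and choose $\rho$ so that $C\rho^{\alpha_0^- -\alpha}\le\frac12$ and $\varepsilon=\frac12\rho^{1+\alpha}$. Then the first-order Taylor polynomial of $w$ at $0$ yields $\ell_{k+1}$. Since $\alpha<\alpha_0$, this closes the induction. Standard summation then gives that $b_k\to Du(0)$ and that $|u-u(0)-Du(0)\cdot x|\le C|x|^{1+\alpha}$ on $B_r$.

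The delicate step is the uniformity of Step 1 in $\vec q$ once the degeneracy has been rescaled. The coefficient in front of $\Delta_p^N$ becomes $\rho^{-k\alpha(\cdot)}\Upsilon(\rho^{k\alpha}|\cdot|,\rho^kx)$, which may become nearly degenerate in different ways at different points. We plan to handle this by proving Step 1 for the whole class of operators $\Phi$ satisfying $|\xi|^{p(x)}\lesssim \Phi\lesssim$ (bounded by a constant times $1+|\xi|^{q_{\max}}$ on bounded gradients). We use only the fact that $\Phi>0$ away from $\xi=0$ to recover the $p$-harmonic limit, and we rely on $m\le1+p_{\min}$ to absorb the Hamiltonian, exactly as in Theorem \ref{thm1}.
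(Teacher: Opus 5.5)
There is a genuine gap in your Step 1. As stated, with $\vec q$ arbitrary and only $\|\tilde f\|_\infty+\|\tilde h\|_\infty\le\delta$, the approximation lemma is false when $m>p_{\rm min}$. The quantity that has to be small is $|\tilde h(x)|\,|Dv+\vec q|^{m-p(x)}$, and a small $\|\tilde h\|_\infty$ does not control it when $|\vec q|$ is large.

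Here is a counterexample. Take $p=2$, $\Phi\equiv 1$ (so $p(\cdot)\equiv q(\cdot)\equiv 0$), $m=1$, $\tilde f\equiv 0$, $\vec q=\omega^3 e_1$ and $v_\omega(x)=\varepsilon_0\sin(\omega x_1)$.
\begin{itemize}
\item Then $v_\omega$ solves your shifted equation with $\tilde h=\Delta v_\omega/|Dv_\omega+\vec q|$.
\item One has $|\tilde h(x)|\le 2\varepsilon_0\omega^{\theta_2-1}|x|^{\theta_2}\to 0$, so the data vanish even in the sense of \eqref{23}.
\item Yet for large $\omega$, no function with a fixed $C^{1,\beta}(B_{1/4})$ bound is $\varepsilon_0/4$-close to $v_\omega$.
\end{itemize}
This is why Lemma \ref{bijin} and Proposition \ref{prop3.2} assume $\mathcal M\big(|\xi|^{(m-p_{\rm min})_+}+1\big)\le\sigma$. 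The absorption ``exactly as in Theorem \ref{thm1}'' uses this hypothesis.

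If you add that hypothesis, you must verify it along the iteration. With $\vec q_k=\rho^{-k\alpha}b_k$, the structural bound $\|\tilde h_k\|_\infty\le K_4\rho^{k(1+\theta_2-\alpha(1+q_{\rm max}-m))}$ only yields $K_4\rho^{k(1+\theta_2-\alpha(1+q_{\rm max}-p_{\rm min}))}|b_k|^{m-p_{\rm min}}$. This stays bounded only when $\alpha\le\frac{1+\theta_2}{1+q_{\rm max}-p_{\rm min}}$. When $m>p_{\rm min}$ and $Du(0)\neq 0$, that cuts into the range \eqref{3aerfadefanwei}. For example, $p=2$, $p_{\rm min}=0$, $q_{\rm max}=m=1$, $\theta_1=0.9$, $\theta_2=0.1$ gives the bound $0.55$, against $\alpha_1=0.95$. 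To reach $\alpha_1$ along your route, you would have to redo the Ishii--Lions estimates of Propositions \ref{prop3.1}--\ref{prop3.2} with the exponents $p_k(x)$ matched pointwise; your plan does not contain this.

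Two further steps are unsupported.
\begin{itemize}
\item Theorem \ref{thm1} applied to $v+\vec q\cdot x$ gives bounds that grow with $\|v+\vec q\cdot x\|_{L^\infty}\sim|\vec q|$. So it does not give $\vec q$-uniform $C^{1,\alpha'}$ compactness.
\item Your contradiction argument needs a $C^{1,\alpha_0^-}$ bound on the limit that is independent of $\vec q_\infty$. Lemma \ref{111raodongzhengzexing} only gives some exponent $\beta_1$, and you assert the stronger bound without proof.
\end{itemize}
Conversely, the obstacle you single out is not one. Normalizing by $\rho^{-k\alpha p_k(x)}$ pointwise, as in Proposition \ref{prop3.4}, preserves \eqref{12} with the same constants.

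The paper avoids the shifted equation entirely, through a dichotomy.
\begin{itemize}
\item It rescales $u(\rho^k x)/\rho^{k(1+\alpha)}$ with no affine correction. It compares with $p$-harmonic $v$ satisfying $v(0)=|Dv(0)|=0$. This is legitimate while $|Du(0)|\le\delta\rho^{k\alpha}$ and needs only sup-norm smallness of $f,h$ (Lemmas \ref{lem3.1}--\ref{lem3.3}).
\item At the switching scale $\kappa=(|Du(0)|/\delta)^{1/\alpha}$ it passes to $u_\kappa(x)=u(\kappa x)/\kappa^{1+\alpha}$, which has $|Du_\kappa(0)|=\delta$.
\item The $C^{1,\alpha'}$ estimate gives $|Du_\kappa|\ge\delta/2$ on a universal ball. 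There the equation reads $-\Delta_p^N u_\kappa=g$ with $g$ universally bounded, and Lemma \ref{111tiaohezhengzexing} finishes the argument.
\end{itemize}
This dichotomy is the idea your proposal is missing.
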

\begin{remark}\label{coro1}
	It should be noted that, in contrast to the sharp  $C^{1,\min\left\{\alpha_{0}^{-},\frac{1}{1+q_{\rm max}}\right\}}$ regularity for solutions with bounded source term and bounded Hamiltonian coefficient $h$ (see Theorem \ref{thm2}),
	Theorem \ref{main} exhibits a significant improvement in smoothness. Moreover, it is noteworthy that
	$$\min\left\{\frac{1+\theta_{1}}{1+q_{\rm max}},\frac{1+\theta_{2}}{1+q_{\rm max}-m}\right\}\geq 1,$$
	provided that $q_{\rm max}\leq \min\left\{\theta_{1},m+\theta_{2}\right\}$. Therefore, the solution $u$ to \eqref{55mainmodel} is of class $ C^{1,\alpha_{0}^{-}}$ at the origin, in other words, the solution of \eqref{55mainmodel} is asymptotically as regular as $p$-harmonic function.
\end{remark}

The final result provided in this work is a Hessian continuity at a local extrema.
\begin{theorem}\label{3aamain2}
	Let $u\in C(B_{1})$ be a bounded viscosity solution of \eqref{55mainmodel} under hypotheses \eqref{12}-\eqref{14} and \eqref{21}-\eqref{23} with $p_{\rm min}\geq 0$. Assume further $q_{\rm max}<\min\left\{\theta_{1},m+\theta_{2}\right\}$, and that origin is a local extrema of $u$, i.e., $u(0)\leq u(x)$ or $u(0)\geq u(x)$ in $B_{\nu}(0)$ for some $\nu\in \left(0,\frac{1}{4}\right)$. Then $u$ is twice differentiable at the origin and
	\begin{equation}\label{3ajielun2}
		|u(x)-u(0)|\leq C|x|^{2+\mu} \quad {\rm for\; all \;} x\in B_{r},
	\end{equation}
	where
	\begin{equation*}
		\mu:=\min\left\{\frac{\theta_{1}-q_{\rm max}}{1+q_{\rm max}},\frac{m+\theta_{2}-q_{\rm max}}{1+q_{\rm max}-m}\right\}
	\end{equation*}
	and universal constants $0<r<\nu$ and $C>0$. That is, $u$ is
	of class $u\in C^{2,\mu}$ at the origin with $|Du(0)|=|D^{2}u(0)|=0$.
\end{theorem}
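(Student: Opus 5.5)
Since the origin is a local extremum, Theorem~\ref{main} applies. Under $q_{\rm max}<\min\{\theta_1,m+\theta_2\}$ we have $\alpha_1>1\geq\alpha_0$ (Remark~\ref{coro1}), so it gives in particular that $u$ is differentiable at $0$. At an interior local extremum this forces $Du(0)=0$. What remains is to upgrade the flat $C^{1,\alpha}$ behaviour to the decay $|u(x)-u(0)|\le C|x|^{2+\mu}$.

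We argue by geometric iteration on dyadic balls, following the scheme of \cite{Nascimento2025,Huo22026}. Set $\beta:=2+\mu$. Unwinding the definition of $\mu$ gives
\[
\beta=\min\left\{\tfrac{2+q_{\rm max}+\theta_1}{1+q_{\rm max}},\ \tfrac{2+q_{\rm max}-m+\theta_2}{1+q_{\rm max}-m}\right\}=1+\alpha_1 .
\]
This is exactly the scaling exponent $\vartheta$ from the heuristic in the introduction, with $\gamma$ replaced by $q_{\rm max}$. WLOG take $u(0)=0$ and assume $0$ is a local minimum, so $u\ge0$ on $B_\nu$. By normalisation (dividing $u$ by a constant and scaling $x\mapsto \rho x$) we may assume $\|u\|_{L^\infty(B_1)}\le1$, and that $K_3,K_4,\|a\|_\infty$ are as small as needed.

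\textbf{Key step (approximation lemma).} There exists $\lambda\in(0,1/2)$ such that every such normalised solution with $u\ge0$ and $u(0)=0$ satisfies $\sup_{B_\lambda}u\le\lambda^{\beta}$. We prove it by compactness and contradiction. Take a sequence $u_k$ with $K_3,K_4\to0$ violating the bound. Theorem~\ref{thm1} gives uniform $C^{1,\alpha'}$ bounds, so $u_k\to u_\infty$. Stability of viscosity solutions, together with the cancellation of the degeneracy (equations of the type $\Phi(Du,x)\Delta_p^N u=0$ reduce to $\Delta_p^N u=0$, as in \cite{Attouchi2018JDE}), shows that $u_\infty$ is normalized $p$-harmonic. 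Moreover $u_\infty\ge0$ with $u_\infty(0)=0$. By the strong minimum principle for $\Delta_p^N$, $u_\infty\equiv0$. This contradicts $\sup_{B_\lambda}u_k>\lambda^\beta$ for large $k$.

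\textbf{Iteration.} Define $v_k(x)=u(\lambda^k x)/\lambda^{k\beta}$. Check that $v_k$ solves an equation of the same class:
\begin{itemize}
\item the new degeneracy satisfies $\Phi_k(\xi,x)=\lambda^{-k(\beta-1)(\cdot)}\Phi(\lambda^{k(\beta-1)}\xi,\lambda^kx)$, which is still of the form \eqref{12}--\eqref{13} with a rescaled modulating coefficient;
\item the source becomes $f_k(x)=\lambda^{k(2-\beta)}\,\lambda^{-k(\beta-1)p(\cdot)}\cdots f(\lambda^k x)$;
\item the Hamiltonian coefficient transforms analogously.
\end{itemize}
The choice $\beta\le 1+\alpha_1$, using the vanishing rates \eqref{23}, keeps $|f_k|\le K_3|x|^{\theta_1}$ and $|h_k|\le K_4|x|^{\theta_2}$. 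Also $v_k\ge0$, $v_k(0)=0$, and $\|v_k\|_{L^\infty(B_1)}\le1$ by induction. The key step then yields $\sup_{B_{\lambda^{k}}}u\le\lambda^{k\beta}$ for all $k$. For $|x|\sim\lambda^{k}$ this gives $|u(x)|\le C|x|^{2+\mu}$, which is \eqref{3ajielun2}. Since $\mu>0$, $u$ is twice differentiable at $0$ with $D^2u(0)=0$.

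\textbf{Main obstacle.} The delicate step is controlling the rescaled degeneracy. Because $\beta>2$, we have $\lambda^{k(\beta-1)}\to0$, so after scaling the factor multiplying the exponent $p(x)$ and the one multiplying $q(x)$ differ. We handle this by dividing the whole equation by $\lambda^{k(\beta-1)q_{\rm max}}$, the worst power. The lower bound $\Phi\ge K_1|\xi|^{p(x)}$ then survives, with coefficients bounded below by $\lambda^{k(\beta-1)(p(x)-q_{\rm max})}\ge1$. The right-hand sides are then bounded exactly when $\beta\le1+\alpha_1$ with $q_{\rm max}$ in the denominators. This is why $q_{\rm max}$, not $p_{\rm min}$, appears in $\mu$, and why $p_{\rm min}\ge0$ (degenerate case) is needed. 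The compactness lemma must therefore be proved uniformly over this enlarged family of $\Phi$'s, which lack a uniform upper bound. We use the lower bound and the cancellation argument, and only $L^\infty$-smallness of the right-hand side after dividing by $\Phi$ on the set where $|Du|$ is large.
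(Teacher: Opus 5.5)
Your argument is correct and is essentially the paper's proof. Both use a compactness lemma at the extremum: the limit is $p$-harmonic, nonnegative and vanishes at $0$, so it is constant by the strong maximum principle. This is followed by geometric iteration of $u(\lambda^k x)/\lambda^{k(2+\mu)}$, where $2+\mu=1+\alpha_1$ is exactly what keeps $|f_k|\le K_3|x|^{\theta_1}$ and $|h_k|\le K_4|x|^{\theta_2}$.

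The ``main obstacle'' is not actually one. Divide the rescaled equation pointwise by $\lambda^{k(\beta-1)p(\lambda^k x)}$, as the paper does and as your own first bullet indicates. Then $\Phi_k$ stays in the class \eqref{12}--\eqref{13} with modulating coefficient $\lambda^{k(\beta-1)(q-p)}a\le a$, so Theorem~\ref{thm1} applies verbatim. Your constant normalization just multiplies that equation by $\lambda^{k(\beta-1)(p-q_{\rm max})}\ge 1$, which is what loses the upper bound, and this can simply be undone. Also, the initial rescaling enlarges rather than shrinks $a$, which is harmless because no estimate depends on $a$.
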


The remainder of this paper is organized as follows. In Section \ref{section222}, we introduce the basic notions and some well-known results. In Section \ref{section333}, we present the proof of local $C^{1,\alpha^{\prime}}$ regularity estimate. Section \ref{section444} is dedicated to proving sharp H\"{o}lder regularity of the gradient stated in Theorem \ref{thm2}. In Section \ref{section555}, we establish the improved gradient regularity result. 
 Finally, in the last section, we complete the proof of Theorem \ref{3aamain2} regarding Schauder-type regularity estimate.
\section{Preliminaries}\label{section222}
In this section, we first introduce the notations and give the definition of viscosity solutions to the normalized $p$-Laplacian equation and \eqref{55mainmodel}, and subsequently collect some useful auxiliary results, which are pivotal for proving our main theorems.

\subsection{Notations and basic concepts}
In the entire paper, let $S^{d}$ be the set of all real $d\times d$ symmetric matrices and $B_{r}(x_{0})$ be the open ball with radius $r$ and centred at $x_{0}\in\rn$. 
In particular, we shall simply denote $B_{r}:=B_{r}(0)$. We use both $\langle \eta, \xi \rangle$ and $\eta\cdot \xi$ to denote the inner product of vectors $\eta, \xi\in\rn$. Also, we have the usual partial ordering: $A\leq B$ in $S^{d}$ means that $\left\langle A\xi,\xi\right\rangle\leq \left\langle B\xi,\xi\right\rangle$ for any $\xi \in \mathbb{R}^{d}$. In other words, $B-A$ is positive semidefinite. For $a,b\in \rn$, we denote by $a\otimes b$ the $d\times d$-matrix for which $(a\otimes b)_{ij}=a_{i}b_{j}$. The symbol $C$ denotes a positive constant whose value may vary from line to line, and only the relevant dependencies are specified in parentheses. Besides, a constant is said to be universal if it depends at most upon $d$, $p$ and the structure constants in \eqref{12}-\eqref{15}.

We define the viscosity solutions of the normalized \( p \)-Laplacian equation and \eqref{55mainmodel}.
\begin{definition}
	Let \( 1 < p < \infty \). An upper semicontinuous function \( u \) is a viscosity subsolution of the equation \( -\Delta_p^N u = f \) if for all \( x_0 \in B_1 \) and \( \varphi \in C^2(B_1) \) such that \( u - \varphi \) attains a local maximum at \( x_0 \), one has
	\[
	\begin{cases} 
		-\Delta_p^N \varphi(x_0) \leq f(x_0), & \text{if } D\varphi(x_0) \neq 0, \\ 
		-\Delta \varphi(x_0) - (p-2)\lambda_{\max}(D^2 \varphi(x_0)) \leq f(x_0), & \text{if } D\varphi(x_0) = 0 \text{ and } p \geq 2, \\ 
		-\Delta \varphi(x_0) - (p-2)\lambda_{\min}(D^2 \varphi(x_0)) \leq f(x_0), & \text{if } D\varphi(x_0) = 0 \text{ and } 1 < p < 2.
	\end{cases}
	\]
	A lower semicontinuous function \( u \) is a viscosity supersolution of the equation \( -\Delta_p^N u = f \) if for all \( x_0 \in B_1 \) and \( \varphi \in C^2(B_1) \) such that \( u - \varphi \) attains a local minimum at \( x_0 \), one has
	\[
	\begin{cases} 
		-\Delta_p^N \varphi(x_0) \geq f(x_0), & \text{if } D\varphi(x_0) \neq 0, \\ 
		-\Delta \varphi(x_0) - (p-2)\lambda_{\min}(D^2 \varphi(x_0)) \geq f(x_0), & \text{if } D\varphi(x_0) = 0 \text{ and } p \geq 2, \\ 
		-\Delta \varphi(x_0) - (p-2)\lambda_{\max}(D^2 \varphi(x_0)) \geq f(x_0), & \text{if } D\varphi(x_0) = 0 \text{ and } 1 < p < 2.
	\end{cases}
	\]
	We say that \( u \) is a viscosity solution of \( -\Delta_p^N u = f \) in \( B_1 \) if it is both a viscosity subsolution and a viscosity supersolution.
\end{definition}
\begin{definition} \label{dingyi2}
	Let \( 1 < p < \infty \). An upper semicontinuous function $u$ is a viscosity subsolution of \eqref{55mainmodel}, if for every \( x_0 \in B_1 \),
	\begin{itemize}	
		\item  [{\rm$\bullet$}] either there exists $\delta>0$ such that $u$ is constant in $B_{\delta}(x_{0})$ and $f(x)\geq 0$ for all $x\in B_{\delta}(x_{0})$;
		\item  [{\rm$\bullet$}]  or for all $\varphi \in C^2\left(B_{1}\right)$ such that $ u -\varphi$ attains a local maximum at $x_0$ and $D\varphi(x_{0})\neq 0$, it holds  
		\[
		-\Phi(D\varphi(x_0),x_0) \Delta_p^N \varphi(x_0)+\mathcal{H}(D\varphi(x_0),x_0) \leq f(x_0).
		\]
	\end{itemize}
A lower semicontinuous function \( u \) is a viscosity supersolution of \eqref{55mainmodel}, if for every \( x_0 \in B_1 \),
	\begin{itemize}	
	\item  [{\rm$\bullet$}] either there exists $\delta>0$ such that $u$ is constant in $B_{\delta}(x_{0})$ and $f(x)\leq 0$ for all $x\in B_{\delta}(x_{0})$;
	\item  [{\rm$\bullet$}]  or for all $\varphi \in C^2\left(B_{1}\right)$ such that $ u -\varphi$ attains a local minimum at $x_0$ and $D\varphi(x_{0})\neq 0$, it holds  
	\[
	-\Phi(D\varphi(x_0),x_0) \Delta_p^N \varphi(x_0)+\mathcal{H}(D\varphi(x_0),x_0) \geq f(x_0).
	\]
\end{itemize}
We say that \( u \) is a viscosity solution of \eqref{55mainmodel} if it is both a viscosity subsolution and a viscosity supersolution. 
\end{definition}
This definition of viscosity solution above was proposed by Birindelli and Demengel in \cite{Birindelli2004,Birindelli2006} for 
the singular case $-1<p_{\rm nin}<0$. Note that in the case $p_{\rm min}>0$, Definition \ref{dingyi2} is equivalent to the classical definition of viscosity solution (cf. \cite[Lemma 2.1]{Davil2010}). Throughout this work, we say that \( u \in C(B_1) \) is a normalized viscosity solution if $\|u\|_{L^{\infty}(B_{1})}\leq 1$.

\begin{remark}\label{xianxingyizhituoyuan}
The normalized \( p \)-Laplacian operator can be seen as 
a uniformly elliptic operator in the sense that
\[
\mathcal{P}_{\lambda, \Lambda}^{-}(D^2 u) \leq \Delta_p^{\mathrm{N}} u \leq \mathcal{P}_{\lambda, \Lambda}^{+}(D^2 u),
\]
where
\[
\mathcal{P}_{\lambda, \Lambda}^{-}(D^2 u) := \inf_{A \in A_{\lambda, \Lambda}} \operatorname{Tr}(AD^2 u) \quad {\rm and}\quad \mathcal{P}_{\lambda, \Lambda}^{+}(D^2 u) := \sup_{A \in A_{\lambda, \Lambda}} \operatorname{Tr}(AD^2 u)
\]
are the Pucci extremal operators, and \( A_{\lambda, \Lambda} \subset S^{d}\) is a set of symmetric \( d\times d \) matrices, whose eigenvalues belong to \([ \lambda, \Lambda ]\). Indeed, the normalized \( p \)-Laplacian operator can be written in the form
\[
\Delta_p^{\mathrm{N}} u = \operatorname{Tr} \left( \left(I + (p - 2) \frac{Du}{|Du|} \otimes \frac{Du}{|Du|}\right) D^2 u \right),
\]
then it is easy to verify that \(\lambda = \min \{1, p - 1\}\) and \(\Lambda = \max \{1, p - 1\}\).
\end{remark} 
\subsection{ Auxiliary Results}
We first recall the uniform H\"{o}lder estimate for gradient to the homogeneous normalized $p$-Laplacian equation.
\begin{lemma}\label{111raodongzhengzexing}{\rm(cf. \cite[Lemma 3.2]{Attouchi2017JMPA})} 
	Let \( v \) be a viscosity solution of
	\[
	-\Delta v - (p-2) \left( D^2 v \frac{Dv + \xi}{|Dv + \xi|}, \frac{Dv + \xi}{|Dv + \xi|} \right) = 0 \quad \text{in } B_1, \quad \xi \in \mathbb{R}^n,
	\]
	with \( \|v\|_{L^\infty(B_1)} \leq \frac{1}{2} \). For all \( 0 < r \leq \frac{1}{2} \), there exist constants \( C_0 = C_0(d,p) > 0 \) and \( \beta_1 = \beta_1(d,p) > 0 \) such that
	\[
	\|v\|_{C^{1,\beta_1}(B_r)} \leq C_0.
	\]
\end{lemma}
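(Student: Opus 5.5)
The plan is to treat the equation for $v$ as the normalized $p$-Laplace equation for the shifted function $w(x):=v(x)+\xi\cdot x$. Indeed $Dw=Dv+\xi$ and $D^2w=D^2v$, so $v$ solves the stated equation if and only if $w$ is a viscosity solution of $-\Delta_p^N w=0$ in $B_1$, with the convention at $Dw=0$ taken from the Definition above. The difficulty is that the estimate must be uniform in $\xi\in\mathbb{R}^d$, while $\|w\|_{L^\infty(B_1)}$ grows like $|\xi|$. I therefore split into the cases $|\xi|$ bounded and $|\xi|$ large, with a threshold $\Gamma=\Gamma(d,p)$ fixed below.

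\textbf{Step 1: Lipschitz bound for $v$, uniform in $\xi$.} Using the Ishii--Lions doubling-of-variables method, I would show $|Dv|\leq L(d,p)$ in $B_{3/4}$, independently of $\xi$.
\begin{itemize}
\item Consider $\sup_{x,y}\{v(x)-v(y)-L\omega(|x-y|)-\tfrac{M}{2}|x-x_0|^2-\tfrac{M}{2}|y-x_0|^2\}$ with a concave $\omega$, and suppose this supremum is positive.
\item At a maximum point $(\bar x,\bar y)$, the jets of $v$ at $\bar x$ and $\bar y$ have gradients $\eta_{x}$ and $\eta_{y}$ of size comparable to $L$.
\item The ellipticity from Remark \ref{xianxingyizhituoyuan} holds uniformly whatever the direction $(\eta+\xi)/|\eta+\xi|$ is. So the matrix inequality from the theorem of sums, tested in the direction $\bar x-\bar y$, yields a contradiction for $L$ large. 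This uses only Pucci-type bounds, which do not see $\xi$.
\item When $\eta+\xi=0$, the relaxed definition via $\lambda_{\max}$/$\lambda_{\min}$ is still squeezed between the same Pucci operators.
\end{itemize}
Standard covering then gives $\|Dv\|_{L^\infty(B_{3/4})}\leq L$.

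\textbf{Step 2: the case $|\xi|\leq\Gamma$.} Here $\|w\|_{L^\infty(B_1)}\leq\tfrac12+\Gamma$. Viscosity solutions of $-\Delta_p^N w=0$ coincide with $p$-harmonic functions (Juutinen--Lindqvist--Manfredi), which satisfy interior $C^{1,\beta}$ estimates with constants depending only on $d$, $p$ and $\|w\|_{L^\infty}$. Since $Dv=Dw-\xi$ differs from $Dw$ by a constant vector, $\|v\|_{C^{1,\beta}(B_{1/2})}\leq C(d,p,\Gamma)$.

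\textbf{Step 3: the case $|\xi|>\Gamma$ with $\Gamma:=2L$.} By Step 1, $|Dv+\xi|\geq|\xi|-L\geq|\xi|/2>0$ in $B_{3/4}$, at least at the level of test-function gradients. The equation for $v$ then reads $F(D^2v,Dv)=0$ with
\[
F(X,\eta)=-\operatorname{Tr}\big((I+(p-2)e(\eta)\otimes e(\eta))X\big),\qquad e(\eta)=\frac{\eta+\xi}{|\eta+\xi|}.
\]
On $\{|\eta|\leq L\}$ this operator is uniformly elliptic with constants $\lambda=\min\{1,p-1\}$ and $\Lambda=\max\{1,p-1\}$. It is also Lipschitz in $\eta$, because
\[
|De(\eta)|\leq \frac{C}{|\eta+\xi|}\leq \frac{2C}{|\xi|}\leq \frac{C}{L}.
\]
Moreover $F(0,\eta)=0$. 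I would then invoke the $C^{1,\beta}$ theory for fully nonlinear uniformly elliptic equations with gradient dependence, such as Trudinger's or \'Swi\k{e}ch's $C^{1,\alpha}$ estimates for $F(D^2u,Du)=0$ with $F$ Lipschitz in the gradient. This gives $\|v\|_{C^{1,\beta}(B_{1/2})}\leq C(d,p)$, with constants depending only on $\lambda,\Lambda$, the Lipschitz bound and $\|v\|_{L^\infty}\leq\tfrac12$. An alternative is to freeze the direction $e(0)$ and use a Caffarelli-type perturbation argument, since the coefficient oscillation is controlled by $L/|\xi|$.

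\textbf{Conclusion.} Take $\beta_1$ as the smaller of the exponents from Steps 2 and 3, and $C_0$ as the larger of the constants. This gives the claim for $r=\tfrac12$, and hence for all $0<r\leq\tfrac12$.

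\textbf{Main obstacle.} I expect Step 1 to be the main obstacle: the Ishii--Lions argument has to be carried out carefully at points where $Dv+\xi$ may vanish, using the relaxed definition. Once the Lipschitz bound is uniform in $\xi$, the dichotomy in Steps 2 and 3 is routine.
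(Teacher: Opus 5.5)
\textbf{Verdict.} The paper gives no proof of this lemma; it is quoted from Attouchi--Parviainen--Ruosteenoja. Your architecture is the standard one: a Lipschitz bound uniform in $\xi$, then small $|\xi|$ via $p$-harmonicity of $w=v+\xi\cdot x$, and large $|\xi|$ via uniform ellipticity. Step 2 and the final assembly are fine.

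\textbf{The gap is in Step 1.} It is precisely your claim that the Ishii--Lions argument ``uses only Pucci-type bounds, which do not see $\xi$''. The viscosity inequalities at $\bar x$ and $\bar y$ involve different matrices $A_1=A(\eta_x+\xi)$ and $A_2=A(\eta_y+\xi)$, while the matrix inequality only controls $X-Y$. You are therefore left with the cross term $\operatorname{Tr}\big((A_1-A_2)Y\big)$. In Pucci language, you only get $0\le\mathcal{P}^{+}(X)-\mathcal{P}^{-}(Y)\le\mathcal{P}^{+}(X-Y)+(\Lambda-\lambda)\sum_i|\mu_i(Y)|$, where $\mu_i(Y)$ are the eigenvalues of $Y$. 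Now $\|Y\|$ can be of order $L/|\bar x-\bar y|$, whereas the good term produced in the direction $\bar x-\bar y$ is only of order $L|\omega''(|\bar x-\bar y|)|$, which is much smaller. So for $p\neq 2$ nothing closes unless $\|A_1-A_2\|$ is small.

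Since $\|A_1-A_2\|\lesssim|\eta_x-\eta_y|/|\eta_x+\xi|$, you need $|\eta_x+\xi|$ large compared with $|\eta_x-\eta_y|$. This is exactly where $\xi$ enters. When $|\xi|$ is comparable to $L\approx|\eta_x|$, the vector $\eta_x+\xi$ can be small (or zero), and then $A_1$ and $A_2$ are unrelated. So the real obstacle is the whole regime $|\xi|\approx L$, not only the points where $D\varphi+\xi=0$. What the Pucci bounds do give you, including at those points, is a Krylov--Safonov H\"older estimate uniform in $\xi$.

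\textbf{How to repair it.} The fix fits your own dichotomy. Run the doubling argument only for $|\xi|\ge 4L$, with $M\le L$. Then $|\eta_x|,|\eta_y|\le 2L$ gives $|\eta_x+\xi|,|\eta_y+\xi|\ge 2L$, hence $\|A_1-A_2\|\lesssim|\eta_x-\eta_y|/L$.

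Even so, the crude bound $|\eta_x-\eta_y|\lesssim M$ leaves a term of order $M/|\bar x-\bar y|$. This beats $L|\bar x-\bar y|^{\beta-1}$ as $|\bar x-\bar y|\to 0$. So feed in the uniform H\"older bound $[v]_{C^{0,\alpha}}\le C(d,p)$ first. Positivity of the doubled functional then gives $|\bar x-x_0|+|\bar y-x_0|\lesssim M^{-1/2}|\bar x-\bar y|^{\alpha/2}$. With $\omega(s)=s-\kappa s^{1+\beta}$ and $\beta<\alpha/2$, the argument closes with $L=L(d,p)$. The only use of $\xi$ is the lower bound on $|\eta_x+\xi|$ and $|\eta_y+\xi|$. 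Then take the threshold $\Gamma$ to be a suitable multiple of $L$ (at least $4L$, not $2L$), and use Step 2 below it.

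\textbf{A smaller point in Step 3.} The bound $|F(X,\eta)-F(X,\eta')|\lesssim\|X\|\,|\eta-\eta'|/|\xi|$ is not a Lipschitz bound in $\eta$ uniform in $X$. That uniform bound is the structure condition of the $C^{1,\alpha}$ theorems for $F(D^2u,Du)=0$ you invoke, so those do not apply as stated. Your fallback does work. At touching points the coefficient matrix stays within $O(L/|\xi|)$ of the constant matrix $A(\xi)$, so for $\Gamma\gg L$ Caffarelli's perturbation argument gives the uniform $C^{1,\beta}$ bound.

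More simply, for $|\xi|\ge 2L$ the function $w$ is $p$-harmonic with $|\xi|/2\le|Dw|\le 3|\xi|/2$ in $B_{3/4}$, hence smooth there. Each $D_kv=D_kw-\xi_k$ then solves a divergence-form equation whose ellipticity ratio depends only on $p$. De Giorgi's estimate yields $[Dv]_{C^{0,\beta}(B_{1/2})}\le C\|Dv\|_{L^\infty(B_{3/4})}\le CL$.
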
 
The following result, established by Siltakoski, provides a fundamental connection between viscosity solutions of the homogeneous normalized $p$-Laplacian and weak solutions of the homogeneous $p$-Laplacian.
\begin{lemma}\label{dengjia}{\rm(cf. \cite[Theorem 5.9]{Siltakoski2018CVPDE})} 
	A function $u \in C(B_{1})$ is a viscosity solution to
	\[
	-\Delta_p^N u = 0 \quad \text{in} \quad B_{1}
	\]
	if and only if it is a weak solution to
	\[
	-\Delta_p u = 0 \quad \text{in} \quad B_{1}.
	\]
\end{lemma}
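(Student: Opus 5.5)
The plan is to prove both implications by reducing to two standard facts: the equivalence between weak and viscosity solutions of the (non-normalized) $p$-Laplace equation due to Juutinen--Lindqvist--Manfredi, and uniqueness for the Dirichlet problem of the homogeneous normalized equation $-\Delta_p^N u=0$. Since the equation is homogeneous, the normalized and non-normalized operators differ only by the factor $|D\varphi|^{p-2}$ at points where the test function has nonvanishing gradient. Hence everything hinges on handling test functions with $D\varphi(x_0)=0$.

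\emph{Weak $\Rightarrow$ viscosity.} Let $u$ be a weak solution of $-\Delta_p u=0$. By Juutinen--Lindqvist--Manfredi, $u$ is a viscosity solution of $-\Delta_p u=0$. Take $\varphi\in C^2$ touching $u$ from above at $x_0$.
\begin{itemize}
\item If $D\varphi(x_0)\neq 0$, dividing $-\Delta_p\varphi(x_0)\le 0$ by $|D\varphi(x_0)|^{p-2}>0$ gives $-\Delta_p^N\varphi(x_0)\le 0$.
\item If $D\varphi(x_0)=0$, I would argue by contradiction. Suppose $-\Delta\varphi(x_0)-(p-2)\lambda_{\max}(D^2\varphi(x_0))>0$ (for $p\ge2$; the case $p<2$ is symmetric with $\lambda_{\min}$). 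Then for every unit vector $e$ we have $-\mathrm{Tr}\big((I+(p-2)e\otimes e)D^2\varphi\big)>0$ near $x_0$. So $\varphi$, after adding a small strict quadratic term $\varepsilon|x-x_0|^2$ (or a quartic term, to keep the touching strict), is a classical strict supersolution of $-\Delta_p^N$ wherever its gradient is nonzero. It is then a $p$-supersolution in the weak sense in a small ball $B_\rho(x_0)$, because the set where $D\varphi=0$ is handled by $|D\varphi|^{p-2}D\varphi\in C^0$ when $p\geq 2$.
\item Comparison for weak $p$-super/subsolutions on $B_\rho(x_0)$, applied to $\varphi+\varepsilon|x-x_0|^4-\delta$ and $u$, then contradicts the touching.
\end{itemize}
The supersolution property is proved in exactly the same way.

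\emph{Viscosity $\Rightarrow$ weak.} Let $u\in C(B_1)$ be a viscosity solution of $-\Delta_p^N u=0$, and fix any ball $B\Subset B_1$.
\begin{itemize}
\item Let $w$ be the unique weak solution of $-\Delta_p w=0$ in $B$ with $w=u$ on $\partial B$. It exists by the direct method and is continuous up to the boundary since balls are regular.
\item By the first implication, $w$ is also a viscosity solution of $-\Delta_p^N w=0$ in $B$.
\item Invoke the comparison principle for viscosity sub/supersolutions of the homogeneous normalized $p$-Laplacian in bounded domains, as in Juutinen--Kawohl and Kawohl--Manfredi--Parviainen. This gives $u=w$ in $B$.
\item Since $B$ is arbitrary, $u$ is locally a weak $p$-harmonic function, i.e.\ a weak solution in $B_1$.
\end{itemize}

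The main obstacle is this comparison principle, since $\Delta_p^N$ is discontinuous at $Du=0$. The strategy I would use is the standard one: perturb the subsolution to a strict subsolution $u_\varepsilon=u+\varepsilon|x|^2$ (or $u-\varepsilon|x|^2$ for the supersolution). Then double variables with the penalization $\frac{1}{4\eta}|x-y|^4$, rather than a quadratic penalization. This ensures that at maximum points with $x_\eta=y_\eta$ the test gradient vanishes and its Hessian is zero, so the semicontinuous-envelope conditions with $\lambda_{\min},\lambda_{\max}$ yield a contradiction with strictness. At points with $x_\eta\neq y_\eta$ the gradients $|x_\eta-y_\eta|^2(x_\eta-y_\eta)/\eta$ are equal and nonzero. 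There Ishii's lemma together with the uniform ellipticity of Remark \ref{xianxingyizhituoyuan} (same coefficient matrix $I+(p-2)\hat\xi\otimes\hat\xi$ for both points) gives the contradiction.
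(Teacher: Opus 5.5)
The paper gives no proof of this lemma; it imports Siltakoski's theorem. Your argument therefore has to stand on its own, and its load-bearing step does not.

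\textbf{The gap: your comparison argument.} The viscosity-to-weak direction rests entirely on a comparison principle for viscosity sub/supersolutions of $-\Delta_p^N u=0$. Your sketch breaks at the claim that, when $x_\eta\neq y_\eta$, both viscosity inequalities carry the same matrix $I+(p-2)\hat\xi\otimes\hat\xi$.
\begin{itemize}
\item The equation is satisfied by $u$, not by $u_\varepsilon=u+\varepsilon|x|^2$. The jet of $u$ at $x_\eta$ has gradient $q-2\varepsilon x_\eta$, where $q=|x_\eta-y_\eta|^2(x_\eta-y_\eta)/\eta$.
\item What you actually obtain is $\operatorname{Tr}\big(A(q-2\varepsilon x_\eta)X\big)\geq 2\varepsilon(d+p-2)$ against $\operatorname{Tr}\big(A(q)Y\big)\leq 0$.
\item The mismatch $\operatorname{Tr}\big((A(q-2\varepsilon x_\eta)-A(q))Y\big)$ is only controlled by $(\varepsilon/|q|)\,\|Y\|\lesssim \varepsilon/|x_\eta-y_\eta|$, using $\|Y\|\lesssim|x_\eta-y_\eta|^2/\eta$. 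Nothing prevents $|q|\ll\varepsilon$, so the $O(\varepsilon)$ gain does not beat this.
\item In fact $u+\varepsilon|x|^2$ need not be a subsolution at all near points where $|Du|\lesssim\varepsilon|x|$ and $D^2u$ is large, because the gradient direction, and with it the operator, changes.
\item A direction-preserving perturbation $g(u)$ with $g'>0$ and $g''>0$ removes the mismatch. But its gain is of order $g''|q|^2$, which vanishes exactly when $x_\eta=y_\eta$. There both jets are $(0,0)$ and the envelope conditions give only $0\le 0$.
\end{itemize}
So the doubling argument as outlined does not close. Comparison for the homogeneous normalized equation is genuinely delicate; its $p=\infty$ analogue is Jensen's uniqueness theorem. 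To my knowledge, the literature results you allude to are obtained by identifying such solutions with $p$-harmonic functions, which is the statement you are trying to prove.

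\textbf{How to repair it.} Avoid viscosity uniqueness altogether.
\begin{itemize}
\item One route: show that viscosity supersolutions of $-\Delta_p^N u=0$ are viscosity supersolutions of $-\Delta_p u=0$ in the Juutinen--Lindqvist--Manfredi sense, then invoke their theorem that such functions are $p$-superharmonic.
\begin{itemize}
\item For $p\geq 2$ this is immediate: at a critical point of the test function, $-\Delta_p\varphi(x_0)$ equals $0$ when $p>2$ and is the envelope expression when $p=2$.
\item For $1<p<2$, shift the test function so it touches at nearby non-critical points.
\end{itemize}
This gives the weak-solution property directly and makes the Dirichlet-problem detour unnecessary.
\item Another route: an inf-convolution argument in the spirit of Julin--Juutinen proves viscosity-to-weak with no comparison for $-\Delta_p^N$. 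To my knowledge this is the mechanism behind the theorem the paper cites.
\end{itemize}

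\textbf{Smaller points in your weak-to-viscosity half.}
\begin{itemize}
\item For $p\geq 2$, what you need is that $z\mapsto|z|^{p-2}z$ is $C^1$, not merely continuous, so that $-\Delta_p\tilde\varphi\geq 0$ holds classically.
\item For $1<p<2$ the case is not symmetric. Here $|D\tilde\varphi|^{p-2}D\tilde\varphi$ is only continuous across the critical set, so you must rule out a singular part of its divergence there. Your contradiction hypothesis forces $D^2\varphi(x_0)\neq 0$, so the critical set lies in a $C^1$ hypersurface and $|D\tilde\varphi|^{p-2}\in L^1_{\mathrm{loc}}$. Alternatively, reduce to non-critical touching points by shifting.
\end{itemize}
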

\begin{remark}\label{ptiaohezhengzexing}
	It is well known that $p$-harmonic functions are of class $C^{1,\alpha_{0}}$ for some maximal exponent $0<\alpha_{0}<1$ that depends only upon $d$ and $p$. This was shown independently by Uraltseva \cite{Uraltseva1968} and Uhlenbeck \cite{Uhlenbeck1977ActaMath} in 
	the case $p>2$, and later extended to the case $p > 1$, see \cite{DiBenedetto1983NonlinearAnal,Lewis1983Indiana}. 
\end{remark}

Next, we present the nearly optimal regularity for solutions to the normalized \( p \)-Laplacian equation.  
\begin{lemma}\label{111tiaohezhengzexing}{\rm(cf. \cite[Theorem 1.3]{Attouchi2017JMPA})}
 Fixing an arbitrary constant \(\varsigma \in (0,\alpha_{0}) \), where \(\alpha_{0}\) is the optimal H\"{o}lder exponent for gradients of \( p \)-harmonic functions in terms of a priori estimate. If \(p > 1\) and \( f \in C(B_1) \cap L^\infty(B_1) \), then viscosity solutions to \(-\Delta_p^N u = f\) are in  
$C_{\rm loc}^{1, \hat{\alpha}_0}(B_1)$ with  $\hat{\alpha}_0 = \alpha_{0}-\varsigma$.
In particular, if the equation is homogeneous, namely \( f = 0 \), then viscosity solutions to  $-\Delta_p^N u = 0$ are in $C_{\rm loc}^{1, \hat{\alpha}_0}(B_1)$ with  $\hat{\alpha}_0 = \alpha_{0}-\varsigma$.
\end{lemma}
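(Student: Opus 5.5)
The plan is the standard compactness/iteration scheme in the spirit of Attouchi–Parviainen–Ruosteenoja: approximate $u$ by $p$-harmonic functions at small scales, then transfer the $C^{1,\alpha_0}$ regularity of the latter, which comes from Remark \ref{ptiaohezhengzexing} and Lemma \ref{dengjia}, to $u$ at every exponent $\alpha=\alpha_0-\varsigma<\alpha_0$.

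\emph{Step 1: normalization.} First I reduce to $\|u\|_{L^\infty(B_1)}\le 1$ and $\|f\|_{L^\infty(B_1)}\le\varepsilon$ for a small $\varepsilon$ fixed later. The equation $-\Delta_p^N u=f$ is one-homogeneous and invariant under translations. Hence $v(x)=u(rx)/K$ solves $-\Delta_p^N v=r^2f(rx)/K$, and $r,K$ can be chosen to achieve this.

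\emph{Step 2: approximation lemma.} Fix $\xi\in\mathbb{R}^d$. Then $w=u-\xi\cdot x$ solves
\[
-\Delta w-(p-2)\Big\langle D^2w\,\tfrac{Dw+\xi}{|Dw+\xi|},\tfrac{Dw+\xi}{|Dw+\xi|}\Big\rangle=f .
\]
I will show: for every $\delta>0$ there is $\varepsilon>0$, independent of $\xi$, such that whenever $\|w\|_{L^\infty(B_1)}\le 1$ and $\|f\|_\infty\le\varepsilon$, there is a solution $h$ of the homogeneous $\xi$-equation with $\|w-h\|_{L^\infty(B_{3/4})}\le\delta$. The argument is by contradiction.
\begin{itemize}
\item Take sequences $w_j,f_j\to0,\xi_j$ violating the claim.
\item By Remark \ref{xianxingyizhituoyuan} the operator lies between Pucci operators. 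Krylov–Safonov theory therefore gives uniform interior Hölder bounds, hence a subsequence $w_j\to h$ locally uniformly.
\item If $\xi_j$ stays bounded, then $\xi_j\to\xi$ and stability of viscosity solutions passes the equation to the limit.
\item If $|\xi_j|\to\infty$, the directions $\xi_j/|\xi_j|\to e$, and $h$ solves the constant-coefficient equation $-\Delta h-(p-2)\langle D^2h\,e,e\rangle=0$.
\end{itemize}
The delicate point in the limit is the set where $Dw+\xi=0$. There one uses the relaxed ($\lambda_{\max}/\lambda_{\min}$) definition of viscosity solutions and its stability.

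\emph{Step 3: uniform $C^{1,\alpha_0}$ bounds for the limit profiles.} I need $|h(x)-h(0)-Dh(0)\cdot x|\le C_1|x|^{1+\alpha_0}$ with $C_1=C_1(d,p)$ independent of $\xi$.
\begin{itemize}
\item In the bounded-$\xi$ case, $h+\xi\cdot x$ is a viscosity solution of $-\Delta_p^Nv=0$. By Lemma \ref{dengjia} it is $p$-harmonic, so Remark \ref{ptiaohezhengzexing} gives this bound.
\item In the unbounded case, $h$ solves a linear uniformly elliptic constant-coefficient equation. It is then smooth with universal $C^2$ bounds, which is better than needed.
\end{itemize}
This is where I expect the main obstacle. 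The constant must be uniform over all $\xi$ while the exponent stays arbitrarily close to $\alpha_0$. Lemma \ref{111raodongzhengzexing} alone gives only some $\beta_1$, so it cannot be used for the exponent. I would first handle this by running Step 2 as a dichotomy on $|\xi|$:
\begin{itemize}
\item for $|\xi|\le M$, use the $p$-harmonic estimate, which is uniform on compact sets of $\xi$ once $\|h+\xi\cdot x\|_\infty\le 1+M$;
\item for $|\xi|>M$, with $M$ large, use Lemma \ref{111raodongzhengzexing} to get $|Dh|\le C_0<|\xi|/2$. Then $Dh+\xi$ stays in a cone around $\xi$, the equation is smooth, and $h$ is $C^{2}$ with a universal bound.
\end{itemize}

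\emph{Step 4: iteration.} Set $\alpha=\alpha_0-\varsigma$ and choose $\rho\in(0,1/4)$ with $C_1\rho^{1+\alpha_0}\le\tfrac12\rho^{1+\alpha}$. Then choose $\delta=\tfrac12\rho^{1+\alpha}$, and $\varepsilon$ from Step 2. Inductively I construct affine maps $\ell_k(x)=a_k+b_k\cdot x$ with
\[
\sup_{B_{\rho^k}}|u-\ell_k|\le\rho^{k(1+\alpha)},\qquad |a_{k+1}-a_k|+\rho^k|b_{k+1}-b_k|\le C\rho^{k(1+\alpha)} .
\]
The rescaling $u_k(x)=\rho^{-k(1+\alpha)}(u-\ell_k)(\rho^kx)$ solves the $\xi$-perturbed equation with $\xi=\rho^{-k\alpha}b_k$. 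Its right-hand side is bounded by $\rho^{k(1-\alpha)}\varepsilon\le\varepsilon$, so Steps 2–3 apply with a constant uniform in $k$. This yields the next affine map. Standard arguments then give $b_k\to Du(0)$ and the pointwise $C^{1,\alpha}$ estimate at every point, hence $u\in C^{1,\alpha_0-\varsigma}_{\rm loc}(B_1)$.
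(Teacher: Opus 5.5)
The paper gives no proof of this lemma; it is quoted from Attouchi--Parviainen--Ruosteenoja. So the only comparison available is with the paper's own machinery. Your argument is correct.

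\textbf{Relation to Proposition~\ref{prop3.4}.} Structurally, your Step~4 is the flatness iteration of Proposition~\ref{prop3.4}, with Lemma~\ref{bijin} as the approximation step, specialized to $\Phi\equiv1$, $\mathcal{H}\equiv0$. You add one genuine upgrade. The paper only knows its approximants are $C^{1,\beta_0}$ for some uncontrolled $\beta_0$, and therefore caps the exponent at $\beta_0$. You instead show that every limit profile obeys a $C^{1,\alpha_0}$-type bound uniformly in $\xi$:
\begin{itemize}
\item when $|\xi|\le M$, because $h+\xi\cdot x$ is $p$-harmonic with sup norm at most $1+M$;
\item when $|\xi|>M\ge 4C_0$, because Lemma~\ref{111raodongzhengzexing} applied to $h/2$ keeps $Dh+\xi$ away from $0$;
\item when $|\xi_j|\to\infty$, because the limit equation has constant coefficients.
\end{itemize}

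\textbf{Relation to the proof of Theorem~\ref{thm2}.} When the paper needs exponents near $\alpha_0$, it goes the other way. It first proves $C^{1,\alpha'}$, then approximates $u$ in $C^1$ by $p$-harmonic functions, proves an oscillation bound with a $|Du(x_0)|$ correction, and splits cases on $|Du(x_0)|$ versus $r^{\alpha}$. The nondegenerate case is delegated to this very lemma.

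Your route needs neither an a priori $C^{1,\beta}$ bound for $u$ nor that case analysis, because nondegeneracy is only used for solutions of the homogeneous equation. Your Krylov--Safonov compactness is legitimate here. The operator, including the relaxed $\lambda_{\max}/\lambda_{\min}$ definition on $\{D\varphi+\xi=0\}$, lies between Pucci operators uniformly in $\xi$. In exchange, the paper's two-stage route produces a $C^1$-approximation and an estimate that tracks $|Du(x_0)|$ explicitly, and Section~\ref{section555} reuses exactly this structure.

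\textbf{Two details to tidy.}
\begin{itemize}
\item In Step~2, do not assert that $h$ solves the homogeneous equation with the same $\xi$. When $|\xi_j|\to\infty$, the limit solves $-\Delta h-(p-2)\langle D^2h\,e,e\rangle=0$, and converting it into a solution of the $\xi_j$-equation is not immediate. It suffices to say that $h$ lies in the class of limit profiles, all of which satisfy the Step~3 bound; that bound is all Step~4 uses.
\item In Step~3, justify the $C^2$ bound for large $|\xi|$ by freezing coefficients. Since $h\in C^{1,\beta_1}$, any test function touching $h$ at $x_0$ has gradient $Dh(x_0)$. Hence $h$ is a viscosity solution of the linear equation with coefficients $I+(p-2)\nu(x)\otimes\nu(x)$, where $\nu=(Dh+\xi)/|Dh+\xi|$. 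These coefficients have $C^{\beta_1}$ seminorm $O(|p-2|C_0/|\xi|)$, so Caffarelli's interior $C^{2,\beta_1}$ estimates give a universal bound.
\end{itemize}
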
 

We end this subsection with the following assertion from \cite[Lemma 2.5]{Attouchi2018JDE} or \cite[Proposition 2.1]{Santos2026JDE}, which will be essential for handling the singular case in subsequent proofs.
\begin{proposition}\label{qiyizhuantuihua}
	Assume \eqref{12}-\eqref{15} hold with $-1<p_{\rm min}<0$. If $u$ is a viscosity solution of \eqref{55mainmodel}, then $u$ is a viscosity solution of
	\begin{equation}\label{model1111}
	\abs{Du}^{-p_{\rm min}}\Phi({Du}, x)\Delta_{p}^{N}u+\abs{Du}^{-p_{\rm min}}\mathcal{H}(Du,x) =\abs{Du}^{-p_{\rm min}}f(x) \quad  \text{in} \quad B_{1}.
	\end{equation}
\end{proposition}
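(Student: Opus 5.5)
The plan is to verify the viscosity inequalities for \eqref{model1111} directly from Definition \ref{dingyi2}, treating separately test functions with non-vanishing and vanishing gradient. We only discuss the subsolution property; the supersolution case is symmetric. Put
\[
G(\xi,X,x):=-\abs{\xi}^{-p_{\rm min}}\Phi(\xi,x)\,\mathrm{Tr}\Big(\big(I+(p-2)\tfrac{\xi}{|\xi|}\otimes\tfrac{\xi}{|\xi|}\big)X\Big)+\abs{\xi}^{-p_{\rm min}}\big(\mathcal{H}(\xi,x)-f(x)\big),\qquad \xi\neq 0 .
\]
By \eqref{12}--\eqref{15}, $\abs{\xi}^{-p_{\rm min}}\Phi(\xi,x)\leq K_2(\abs{\xi}^{p(x)-p_{\rm min}}+a(x)\abs{\xi}^{q(x)-p_{\rm min}})$ stays bounded as $\xi\to 0$, because all exponents are nonnegative. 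Moreover $\abs{\xi}^{-p_{\rm min}}(\mathcal{H}-f)\to 0$, since $-p_{\rm min}>0$ and $\mathcal{H}$ is bounded near $\xi=0$. So \eqref{model1111} is a (possibly degenerate) equation whose operator stays bounded at $\xi=0$. At a point with zero gradient, its viscosity inequality must be read through the semicontinuous envelopes $G_*,G^*$ at $(0,X,x_0)$. These envelopes are of the form $-c\,\mathcal{P}^{\pm}(X)$-type expressions with $c\in[0,K_2(1+a(x_0))]$, and $c=0$ whenever $p(x_0)>p_{\rm min}$.

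\textbf{Step 1: nonzero gradient.} Let $u-\varphi$ have a local maximum at $x_0$ with $D\varphi(x_0)\neq0$. If $u$ is constant near $x_0$, then $D\varphi(x_0)=0$, which is excluded. Hence Definition \ref{dingyi2} gives $-\Phi\Delta_p^N\varphi+\mathcal{H}\leq f$ at $x_0$. Multiplying by the positive number $\abs{D\varphi(x_0)}^{-p_{\rm min}}$ yields the required inequality.

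\textbf{Step 2: zero gradient.} Let $D\varphi(x_0)=0$. If $u$ is constant near $x_0$, the inequality is immediate, since $G_*(0,0,x_0)\le 0$ (the Hessian contribution vanishes and the lower-order part tends to $0$). Otherwise, replace $\varphi$ by $\varphi+\delta\abs{x-x_0}^4$ so that the maximum is strict. Then consider $\varphi_k(x)=\varphi(x)+\delta\abs{x-x_0}^4+\langle\xi_k,x\rangle$ with $\xi_k\to0$. Maxima $x_k$ of $u-\varphi_k$ exist and satisfy $x_k\to x_0$.
\begin{itemize}
\item If $D\varphi_k(x_k)\neq0$ along a subsequence, Step 1 gives $G(D\varphi_k(x_k),D^2\varphi_k(x_k),x_k)\le 0$. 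Continuity of $\Phi,\mathcal{H},f$ and the definition of the lower envelope then let us pass to the limit, giving $G_*(0,D^2\varphi(x_0),x_0)\le0$.
\item The remaining case is the main obstacle: $D\varphi_k(x_k)=0$ for every choice of small $\xi_k$, i.e. $D\varphi(x_k)+4\delta\abs{x_k-x_0}^2(x_k-x_0)=-\xi_k$ for all small $\xi_k$. We handle it as in \cite{Attouchi2018JDE,Birindelli2004}. The map $\xi\mapsto x(\xi)$ shows that the gradient map of $\varphi+\delta|x-x_0|^4$ covers a neighbourhood of $0$ at maximum points of $u$ minus an affine function. Combined with the maximum property, a convexity/sup-convolution type argument then yields $D^2\varphi(x_0)\geq 0$ in the appropriate direction. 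The alternative conclusion is that $u$ coincides locally with an affine function touching it, which forces the constant case treated above.
\end{itemize}
With $D^2\varphi(x_0)\geq0$, the required inequality $-c\,\mathcal{P}^{-}(D^2\varphi(x_0))\leq 0$ holds trivially.

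Finally, we note that when $p(x_0)>p_{\rm min}$ the coefficient $c$ vanishes, so the zero-gradient inequality reduces to $0\le0$ and needs no argument. Thus the genuine work lies only in the second bullet at points where $p(x_0)=p_{\rm min}$. That is where we would import the Birindelli--Demengel perturbation lemma as cited.
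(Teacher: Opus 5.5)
You leave the only nontrivial case unproved. Step 1, the first bullet of Step 2, and the envelope bookkeeping are fine. This includes your observation that the zero-gradient condition is void wherever $\hat\Phi(\xi,x):=\abs{\xi}^{-p_{\rm min}}\Phi(\xi,x)$ satisfies $\liminf_{(\xi,x)\to(0,x_0)}\hat\Phi(\xi,x)=0$.

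Where this $\liminf$ is positive (e.g.\ $p\equiv p_{\rm min}$ near $x_0$), however, $G_*(0,X,x_0)\le 0$ is equivalent to the genuine condition $-\mathrm{Tr}\,X-(p-2)\lambda_{\max}(X)\le 0$ (with $\lambda_{\min}$ if $1<p<2$). That is exactly what your second bullet must deliver. There you only describe an argument (the gradient map ``covers a neighbourhood'', ``a convexity/sup-convolution type argument''). You then assert the dichotomy ``$D^2\varphi(x_0)\ge 0$ or $u$ is locally affine'' without proof.

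Nothing in the linear-tilt construction supports that dichotomy. Once every tilted contact point has zero gradient, Definition \ref{dingyi2} gives you no information at all. The dichotomy would therefore have to be a purely geometric statement about continuous functions, which you neither formulate precisely nor prove. Semidefiniteness ``in the appropriate direction'' would not even suffice, since $\mathrm{Tr}\,X+(p-2)\lambda_{\max}(X)$ is not controlled by a single nonnegative direction.

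The missing idea is to choose a perturbation for which the contact gradient \emph{cannot} vanish, rather than tilting and hoping. Argue by contradiction. Suppose $G_*(0,M,x_0)>0$ with $M=D^2\varphi(x_0)$. Then $\liminf\hat\Phi>0$ and $\max_{\abs{e}=1}\mathrm{Tr}\big((I+(p-2)e\otimes e)M\big)<0$, so $M$ has a negative eigenvalue.

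Replace $\varphi$ by the strictly touching quadratic $q(x)=\varphi(x_0)+\frac12\langle M'(x-x_0),x-x_0\rangle$, where $M'=M+\tau I$ with $\tau>0$ small. Let $T$ be the span of the eigenvectors of $M'$ with negative eigenvalues. Let $x_\epsilon\to x_0$ be a maximum point of $u-q+\epsilon\abs{P_T(x-x_0)}$, and put $y=x_\epsilon-x_0$.

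If $P_Ty\neq0$, the test gradient $M'y-\epsilon P_Ty/\abs{P_Ty}$ has inner product $\langle M'P_Ty,P_Ty\rangle-\epsilon\abs{P_Ty}<0$ with $P_Ty$, by invariance of $T$. So it is nonzero, and the test Hessian is $\le M'$. If $P_Ty=0$, then for a unit vector $e\in T$ the function $q-\epsilon\langle e,P_T(x-x_0)\rangle$ touches $u$ from above at $x_\epsilon$. Its gradient $M'y-\epsilon e$ is nonzero, since $M'y\in T^\perp$.

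In both cases your Step 1 yields
$c_0\,\hat\Phi(\eta_\epsilon,x_\epsilon)\le\abs{\eta_\epsilon}^{-p_{\rm min}}\big(f(x_\epsilon)-\mathcal{H}(\eta_\epsilon,x_\epsilon)\big)\to0$. Here $\eta_\epsilon\to0$ is the test gradient and $c_0:=-\max_{\abs{e}=1}\mathrm{Tr}\big((I+(p-2)e\otimes e)M'\big)>0$. This contradicts $\liminf\hat\Phi>0$. Supersolutions are symmetric: use the positive eigenspace and $+\epsilon\abs{P_T}$.

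This is the device, going back to Imbert--Silvestre, that the paper itself uses in Case 1 of the proof of Lemma \ref{bijin}, with $\varphi_\gamma=\varphi+\gamma\abs{P_T(x)}$. The paper gives no proof of this proposition and only cites it. Finally, if \eqref{model1111} is read in the sense of Definition \ref{dingyi2} (nonzero-gradient test functions, plus the locally constant alternative, where $\abs{Du}^{-p_{\rm min}}f=0$), your Step 1 alone already proves the statement.
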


\section{Local H\"{o}lder regularity of the gradient}\label{section333}
This section is devoted to the proof of Theorem \ref{thm1} on the local gradient H\"{o}lder continuity for viscosity solutions to equation \eqref{55mainmodel}.

\subsection{Compactness of solutions}
In this subsection, we show local H\"{o}lder continuity of viscosity solution to the following perturbed equations
\begin{equation}\label{531}
	-\Phi({Du+\xi}, x)\Delta_{p,\xi}^{N}u+\mathcal{H}({Du+\xi}, x)=f(x) \quad  \text{in} \quad B_{1},
\end{equation}
where $\xi$ is an arbitrary vector in $\rn$ and 
$$\Delta_{p,\xi}^{N}u:=\Delta u+(p-2)\left\langle D^2 u \frac{Du + \xi}{|Du + \xi|}, \frac{Du + \xi}{|Du + \xi|} \right\rangle.$$
Our proof relies on the celebrated Crandall-Ishii-Lions Lemma (see \cite[Theorem 3.2]{Crandle1}, \cite[Proposition II.3]{Lions1}) .

We first establish the following compactness result for the case $0<m\leq p_{\rm min}$.
\begin{proposition}\label{prop3.1} 
	Let $u\in C(B_{1})$ be a normalized viscosity solution of \eqref{531} under assumptions \eqref{12}-\eqref{15} with $0<m\leq p_{\rm min}$. Then there exists a constant $\mathcal{J}_{0}=\mathcal{J}_{0}(d,p,m,p_{\rm min},\|f\|_{L^{\infty}(B_{1})},\mathcal{K},\mathcal{M})>1$ such that
	\begin{itemize}
		\item [{\rm$({{\rm i}})$}] if $\abs{\xi}\geq \mathcal{J}_{0}$, then $u$ is locally Lipschitz continuous in $B_{1}$.
		 In addition, there holds that
		 $$|u(x)-u(y)|\leq C|x-y|$$
		 for all $x,y\in B_{3/4}$, where $C>0$ is a universal constant.
		\item [{\rm$({{\rm ii}})$}] If $\abs{\xi}<\mathcal{J}_{0}$, then $u\in C_{\rm loc}^{0,\gamma}(B_{1})$ for some $\gamma>0$. In addition, 
		there holds that
		$$|u(x)-u(y)|\leq C|x-y|^{\gamma}$$
		for all $x,y\in B_{3/4}$, where $C>0$ is a universal constant.
	\end{itemize}
\end{proposition}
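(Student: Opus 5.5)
We prove both parts with the Ishii--Lions doubling of variables, via the Crandall--Ishii--Lions lemma, following the scheme of Attouchi--Ruosteenoja and Imbert--Silvestre for gradient-degenerate equations. Fix $x_0\in B_{3/4}$. For a concave modulus $\omega$ to be chosen, and constants $L_1,L_2>0$, we consider
\[
\Psi(x,y)=u(x)-u(y)-L_2\,\omega(|x-y|)-\frac{L_1}{2}|x-x_0|^2-\frac{L_1}{2}|y-x_0|^2 .
\]
We argue by contradiction: suppose $\max_{\overline B_1\times\overline B_1}\Psi>0$ at some $(\bar x,\bar y)$. Since $\|u\|_{L^\infty}\le1$, choosing $L_1$ large (say $L_1\ge 32$) forces $\bar x,\bar y$ into $B_{7/8}$ and $\bar x\ne\bar y$. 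The lemma then gives limiting jets $(\eta_1,X)\in\overline J^{2,+}u(\bar x)$ and $(\eta_2,Y)\in\overline J^{2,-}u(\bar y)$, with
\[
\eta_1=L_2\omega'(\delta)e+L_1(\bar x-x_0),\qquad \eta_2=L_2\omega'(\delta)e-L_1(\bar y-x_0),
\]
where $\delta=|\bar x-\bar y|$ and $e=(\bar x-\bar y)/\delta$. It also gives the matrix inequality
\[
\begin{pmatrix}X&0\\0&-Y\end{pmatrix}\le
\begin{pmatrix}Z&-Z\\-Z&Z\end{pmatrix}+\frac{2}{\iota}\begin{pmatrix}Z^2&-Z^2\\-Z^2&Z^2\end{pmatrix}+(2L_1+\iota)I ,
\]
where $Z=L_2\omega''(\delta)e\otimes e+L_2\frac{\omega'(\delta)}{\delta}(I-e\otimes e)$. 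Testing this inequality with $(e,-e)$ yields a strongly negative eigenvalue of $X-Y$ in direction $e$, of size $\lesssim 4L_2\omega''(\delta)<0$ (up to harmless terms). All other eigenvalues of $X-Y$ are $\le C L_1$.

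Next we use the equations. Take $L_2$ large relative to $L_1$, so that $|\eta_i|\sim L_2\omega'(\delta)$. The key point is that $|\eta_i+\xi|$ stays bounded below in both cases:
\begin{itemize}
\item In case (i), $|\xi|\ge\mathcal J_0$ and we keep $L_2$ much smaller than $\mathcal J_0$, so $|\eta_i+\xi|\ge|\xi|/2$.
\item In case (ii), $|\xi|<\mathcal J_0$ and we take $L_2\gg\mathcal J_0$, so $|\eta_i+\xi|\ge L_2\omega'(\delta)/2$.
\end{itemize}
In either case, dividing the sub- and supersolution inequalities by $\Phi(\eta_i+\xi,\cdot)\ge K_1|\eta_i+\xi|^{p(\cdot)}$ gives
\[
-\mathrm{Tr}\big(A(\eta_1+\xi)X\big)\le \frac{\|f\|_\infty+\mathcal K+\mathcal M|\eta_1+\xi|^m}{K_1|\eta_1+\xi|^{p(\bar x)}},
\]
and the analogous reversed inequality at $\bar y$. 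Here $A(\eta)=I+(p-2)\frac{\eta}{|\eta|}\otimes\frac{\eta}{|\eta|}$. Because $0<m\le p_{\min}$ and $|\eta_i+\xi|\ge1$ (since $\mathcal J_0>1$ and $L_2\omega'\ge1$), the right-hand side is bounded by the universal constant $C_*=(\|f\|_\infty+\mathcal K+\mathcal M)/K_1$. This is exactly where the hypothesis $m\le p_{\min}$ enters: the Hamiltonian is absorbed by the degeneracy without any smallness. Subtracting the two inequalities, we write
\[
\mathrm{Tr}\big(A(\eta_1+\xi)X\big)-\mathrm{Tr}\big(A(\eta_2+\xi)Y\big)=\mathrm{Tr}\big(A(\eta_1+\xi)(X-Y)\big)+\mathrm{Tr}\big((A(\eta_1+\xi)-A(\eta_2+\xi))Y\big).
\]
Since $A$ is uniformly elliptic with constants $\min\{1,p-1\}$ and $\max\{1,p-1\}$ (Remark \ref{xianxingyizhituoyuan}), the first term is $\le \lambda\cdot 4L_2\omega''(\delta)+CL_1$. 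The second term is controlled using
\[
|A(\eta_1+\xi)-A(\eta_2+\xi)|\le C\frac{|\eta_1-\eta_2|}{\min_i|\eta_i+\xi|}\le \frac{CL_1}{\min_i|\eta_i+\xi|},
\]
together with the bound $\|Y\|\lesssim L_2\omega'(\delta)/\delta$ (obtained after restricting to the relevant eigenvalue set, as in Attouchi--Parviainen--Ruosteenoja).

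We then pick $\omega$ to reach a contradiction. In case (ii), take $\omega(t)=t^{\gamma}$; then $\omega''=-\gamma(1-\gamma)t^{\gamma-2}$ dominates all error terms, namely $CL_1$, the term $CL_1\cdot\frac{L_2\omega'/\delta}{L_2\omega'}\sim C L_1/\delta$, and $2C_*$, once $L_2$ is large. This contradicts the combined inequality and gives $\gamma$-Hölder continuity. In case (i), take the Lipschitz modulus $\omega(t)=t-\kappa t^{3/2}$; here the lower bound $|\eta_i+\xi|\ge|\xi|/2\ge\mathcal J_0/2$ keeps the coefficient perturbation term of order $L_1L_2/(\mathcal J_0\delta)$, and this is beaten by $L_2\omega''\sim -L_2\delta^{-1/2}$ only after fixing $\mathcal J_0$ large depending on $L_1$ and the data. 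The Lipschitz constant obtained this way does not depend on $\xi$.

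\textbf{Main obstacle.} We expect the hardest step to be case (i): the constants $L_2$ and $\mathcal J_0$ must be chosen in a consistent order so that the error from the $\xi$-dependent matrix $A(\eta+\xi)$ is dominated. The first thing to try is to set $L_1$ universal, then $L_2$ universal, and only then $\mathcal J_0\gg L_2$ so that $|\eta_i|\le|\xi|/2$. Finally, covering $B_{3/4}$ by such balls centered at points $x_0$ yields the stated estimates with universal $C$.
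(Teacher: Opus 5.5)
\textbf{The gap is in case (i), at the final absorption step.} With only the crude localization $|\bar x-x_0|,|\bar y-x_0|\lesssim L_1^{-1/2}$, the coefficient error $\operatorname{Tr}\big((A(\eta_1+\xi)-A(\eta_2+\xi))Y\big)$ is, as you compute, of order $\frac{L_1L_2}{\mathcal J_0\,\delta}$. The good term from $\omega(t)=t-\kappa t^{3/2}$ is only of order $L_2\kappa\,\delta^{-1/2}$. Their ratio is $\sim\frac{L_1}{\kappa\mathcal J_0}\,\delta^{-1/2}$, and nothing bounds $\delta=|\bar x-\bar y|$ from below. The maximization gives only $\delta\lesssim 1/L_2$, and $\delta$ depends on all the constants. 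So for every fixed $\mathcal J_0$ the error wins once $\delta\lesssim(L_1/(\kappa\mathcal J_0))^2$, and no ordering of $L_1,L_2,\mathcal J_0$ repairs this.

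With this localization no Lipschitz modulus can close the estimate. The error behaves like $\omega'(\delta)/\delta$ and the good term like $|\omega''(\delta)|$. Requiring $|\omega''|\ge c\,\omega'/\delta$ near $0$ forces $\omega'(\delta)\gtrsim\delta^{-c}$, which is not Lipschitz.

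The paper's proof of (i) has the same defect. It uses the same modulus $s-w_0s^{1+\beta}$, the same choice $\mathcal J_0=3L_1$ and the same splitting $I_1+I_2$. In \eqref{539} the term $16d|p-2|L_2|\hat x-\hat y|^{-1}$ is not dominated by $4L_1\min\{1,p-1\}\beta(1+\beta)w_0|\hat x-\hat y|^{\beta-1}$ when $|\hat x-\hat y|$ is small. So ``choosing $L_1$ large'' does not give a contradiction when $p\neq 2$.

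\textbf{What is missing is an improved localization from a preliminary modulus of continuity.} This is the standard device in Lipschitz proofs for $\xi$-shifted normalized $p$-Laplacians. For instance, $w=u+\xi\cdot x$ satisfies Pucci inequalities with right-hand side $C_*$ wherever $|Dw|\ge1$; this is where $m\le p_{\min}$ is used. The Imbert--Silvestre estimate for equations holding only where the gradient is large, applied to $w/(1+|\xi|)$, then gives $[u]_{C^{0,\tau}}\le C(1+|\xi|)$ on a slightly smaller ball.

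At the maximum point this yields $\frac{L_1}{2}(|\bar x-x_0|^2+|\bar y-x_0|^2)\le C(1+|\xi|)\delta^{\tau}$. Hence $|\eta_1-\eta_2|\lesssim\sqrt{L_1(1+|\xi|)}\,\delta^{\tau/2}$, and the coefficient error improves to $\lesssim\sqrt{L_1/|\xi|}\,L_2\,\delta^{\tau/2-1}$. Take $\omega(t)=t-\kappa t^{1+\beta}$ with $\beta<\tau/2$, so not $\beta=1/2$. The error is then absorbed by $L_2\kappa\beta(1+\beta)\delta^{\beta-1}$ as soon as $\mathcal J_0$ is large compared with $L_1$. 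With that change your ordering of constants works, and the Lipschitz bound is independent of $\xi$.

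\textbf{Case (ii) is correct as it stands, and it differs from the paper.} You run Ishii--Lions with $t^{\gamma}$ and $L_2\gg\mathcal J_0$. The factor $1/\min_i|\eta_i+\xi|\sim 1/(L_2\delta^{\gamma-1})$ cancels the growth $\|Y\|\sim L_2\delta^{\gamma-2}$, leaving an error $\sim L_1/\delta$ that the good term absorbs. The paper instead notes that $u$ satisfies Pucci inequalities on $\{|Du|\ge 3\mathcal J_0\}$ and quotes the Imbert--Silvestre estimate. Your route is self-contained; the paper's is shorter.
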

\begin{proof}
(i) To prove the local Lipschitz regularity of $u$, let us fix $0<r<r_{1}<1$ and consider the quantity
	\begin{equation}\label{541}
		\mathcal{G}(x_{0}):=\sup\limits_{(x,y)\in B_{r_{1}}\times B_{r_{1}}}\left\{u(x)-u(y)-L_{1}w(\abs{x-y})-L_{2}\Big(\lvert x-x_{0}\rvert^{2}+\lvert y-x_{0} \rvert^{2} \Big)\right\}
	\end{equation}
	for each $x_{0}\in B_{r}$, where
	$$
	w(s)=
	\begin{cases}
		s- w_{0}s^{1+\beta}& \text{if} \;\; 0\leq s\leq s_{0}:=\left(\frac{1}{(1+\beta)w_{0}}\right)^{1/\beta}, \\
		w(s_{0}) &\text{if}\;\;  s>s_{0},
	\end{cases}
	$$
	with $\beta\in(0,1)$ and $w_{0}\in\left(0,\frac{1}{(1+\beta)2^{\beta}}\right)$. Observe that $s_{0}>2$,
	$$w(s)\geq 0,\quad 0\leq w^{\prime}(s)\leq 1,\quad w^{\prime\prime}(s)\leq 0,\quad \forall s\geq 0.$$
	
	Our goal is to prove that there exist constants $L_{1},L_{2}>1$ such that $\mathcal{G}\leq 0$ for all $x_{0}\in B_{r}$. We argue by contradiction by assuming that there exists $\hat{x}_{0}\in B_{r}$ so that $\mathcal{G}(\hat{x}_{0})>0$ for all $L_{1},L_{2}>1$.  Consider $\psi,\Psi:\overline{B}_{r_{1}}\times\overline{B}_{r_{1}}\rightarrow \mathbb{R}$, defined by
	\begin{equation*}
		\begin{cases}
			\psi(x,y):=L_{1}w(\abs{x-y})+L_{2}\Big(\lvert x-\hat{x}_{0}\rvert^{2}+\lvert y-\hat{x}_{0} \rvert^{2} \Big),\\
			\Psi\left(x,y\right):=u(x)-u(y)-\psi(x,y).
		\end{cases}
	\end{equation*}
	Let $\left(\hat{x},\hat{y}\right)\in \overline{B}_{r_{1}}\times\overline{B}_{r_{1}}$ be a maximum point for $\Psi(x,y)$, i.e., $\Psi\left(\hat{x},\hat{y}\right)>0$. Note that $\hat{x}\neq\hat{y}$; otherwise the maximum of $\Psi$ would be nonpositive. It follows from $\|u\|_{L^{\infty}(B_{1})}\leq 1$ that
	\begin{equation}\label{5342}
		L_{1}w(\abs{\hat{x}-\hat{y}})+L_{2}\Big(\lvert \hat{x}-\hat{x}_{0}\rvert^{2}+\lvert \hat{y}-\hat{x}_{0} \rvert^{2} \Big)<u(\hat{x})-u(\hat{y})\leq 2\|u\|_{L^{\infty}(B_{1})}\leq 2.
	\end{equation}
This together with the triangle inequality yields that
\begin{equation}\label{534}
	\abs{\hat{x}-\hat{y}}\leq \abs{\hat{x}-\hat{x}_{0}}+\abs{\hat{y}-\hat{x}_{0}}\leq \sqrt{2\left(\lvert \hat{x}-x_{0}\rvert^{2}+\lvert \hat{y}-x_{0} \rvert^{2} \right)}\leq \frac{2}{\sqrt{L_{2}}}.
\end{equation}
Choosing $L_{2}>\frac{8}{(r_{1}-r)^{2}}$ so that $\abs{\hat{x}-\hat{y}}<1$ and
$$|\hat{x}|\leq \lvert \hat{x}-\hat{x}_{0}\rvert+|\hat{x}_{0}|\leq \sqrt\frac{2}{L_{2}}
+r<\frac{r+r_{1}}{2},\quad |\hat{y}|\leq \lvert \hat{y}-\hat{x}_{0}\rvert+|\hat{x}_{0}|\leq \sqrt\frac{2}{L_{2}}+r<\frac{r+r_{1}}{2}.$$
This means that	
$\hat{x},\hat{y}$ belongs to the interior of $B_{(r+r_{1})/2}$.

 Next, we invoke the Crandall-Ishii-Lions lemma (see \cite[Theorem 3.2]{Crandle1}) to assure
	the existence of a limiting subjet $\left(\xi_{\hat{x}},X\right)$ of $u$ at $\hat{x}$ and a limiting superjet $\left(\xi_{\hat{y}},Y\right)$ of $u$ at $\hat{y}$, such that the matrices $X,Y\in S^{d}$ satisfy the matrix inequality
	\begin{equation}\label{434matrix}
		\left(
		\begin{array}{cc}
			X & 0 \\
			0 & -Y \\
		\end{array}
		\right)
		\leq  \left(
		\begin{array}{cc}
			B & -B \\
			-B & B \\
		\end{array}
		\right)+
		(2L_{2}+\epsilon)
		\left(
		\begin{array}{cc}
			I & 0 \\
			0 & I \\
		\end{array}
		\right)
	\end{equation}
	with $\epsilon\in(0,1)$, that only depends on the norm of $B$ and can be made sufficiently small. Here,
	\begin{equation*}
		\begin{split}
			\xi_{\hat{x}}&:=D_{x}\psi(\hat{x},\hat{y})= L_{1}w^{\prime}(\abs{\hat{x}-\hat{y}})\frac{\hat{x}-\hat{y}}{\abs{\hat{x}-\hat{y}}}+2L_{2}(\hat{x}-\hat{x}_{0}),\\
			\xi_{\hat{y}}&:=-D_{y}\psi(\hat{x},\hat{y})=L_{1}w^{\prime}(\abs{\hat{x}-\hat{y}})\frac{\hat{x}-\hat{y}}{\abs{\hat{x}-\hat{y}}}-2L_{2}(\hat{y}-\hat{x}_{0}),
		\end{split}
	\end{equation*}
\begin{equation}\label{538}
	\begin{split}
	B:=L_{1}\left[\frac{w^{\prime}(\abs{\hat{x}-\hat{y}})}{\abs{\hat{x}-\hat{y}}}I+\left(w^{\prime\prime}(\abs{\hat{x}-\hat{y}})-\frac{w^{\prime}(\abs{\hat{x}-\hat{y}})}{\abs{\hat{x}-\hat{y}}}\right)\frac{(\hat{x}-\hat{y})\otimes(\hat{x}-\hat{y})}{\abs{\hat{x}-\hat{y}}^{2}}\right].
	\end{split}
\end{equation}
	Before two viscosity inequalities are given, for simplicity, we denote
	\begin{equation*}
			\eta_1 := \xi_{\hat{x}} + \xi,\quad  \eta_2 = \xi_{\hat{y}} + \xi,
	\end{equation*}	
	\begin{equation*}
	A(\eta_1) := I + (p-2) \dfrac{\eta_1}{|\eta_1|} \otimes \dfrac{\eta_1}{|\eta_1|},\quad 
	A(\eta_2) := I + (p-2) \dfrac{\eta_2}{|\eta_2|} \otimes \dfrac{\eta_2}{|\eta_2|},
	\end{equation*}		
\begin{equation*}
	G_{\xi_{\hat{x}}}(X) := -\operatorname{Tr}(A(\eta_1)X),\quad 
	G_{\xi_{\hat{y}}}(Y) := -\operatorname{Tr}(A(\eta_2)Y).
\end{equation*}	
	Then we have the following two viscosity inequalities
	\begin{equation*}
		\Phi({\eta_{1}},\hat{x})G_{\xi_{\hat{x}}}(X)+\mathcal{H}(\eta_{1},\hat{x})\leq f(\hat{x}),
	\end{equation*}
	\begin{equation*}
		\Phi({\eta_{2}},\hat{y})G_{\xi_{\hat{y}}}(Y)+\mathcal{H}(\eta_{2},\hat{y})\geq f(\hat{y}).
	\end{equation*}
Then it follows that		
	\begin{equation}\label{53555}
		G_{\xi_{\hat{y}}}(Y)-G_{\xi_{\hat{x}}}(X)\geq \frac{f(\hat{y})-\mathcal{H}(\eta_{2},\hat{y})}{\Phi({\eta_{2}},\hat{y})}- \frac{f(\hat{x})-\mathcal{H}(\eta_{1},\hat{x})}{\Phi({\eta_{1}},\hat{x})}=:\mathcal{D}_{1}.
	\end{equation}
{\bf Estimate of $\mathcal{D}_{1}$}. Suppose $\abs{\xi}\geq \mathcal{J}_{0}$, where $\mathcal{J}_{0}$ will be determined later. We first choose $L_{1}>L_{2}$, then it follows from $\Abs{\hat{x}-\hat{x}_{0}}< \frac{1}{2}$ and $0\leq w^{\prime}(s)\leq 1$ for $s>0$ that
	\begin{equation*}
		\abs{\xi_{\hat{x}}}\leq L_{1}w^{\prime}(\abs{\hat{x}-\hat{y}})+2L_{2}\Abs{\hat{x}-\hat{x}_{0}}< 2L_{1}.
	\end{equation*}
	Now we take $\mathcal{J}_{0}:=3L_{1}$ so that
	\begin{align}\label{5xjie}
		|\eta_{1}|\geq\abs{\xi}- \abs{\xi_{\hat{x}}}\geq \mathcal{J}_{0}-2L_{1}=L_{1}>1.
	\end{align}
	In exactly the same way, we get
	\begin{equation}\label{5yjie}
		|\eta_{2}|\geq L_{1}>1.
	\end{equation}
Applying conditions \eqref{12}-\eqref{15}, in combination with \eqref{5xjie}, \eqref{5yjie}, and $0<m\leq p_{\rm min}$, we obtain
	\begin{equation}\label{532}
		\begin{split}
			|\mathcal{D}_{1}|\leq& \frac{\|f\|_{L^{\infty}(B_{1})}+\mathcal{K}+\mathcal{M}\abs{\eta_{2}}^{m}}{\abs{\eta_{2}}^{p(\hat{y})}}+\frac{\|f\|_{L^{\infty}(B_{1})}+\mathcal{K}+\mathcal{M}\abs{\eta_{1}}^{m}}{\abs{\eta_{1}}^{p(\hat{x})}}\\
			\leq&\frac{2\left(\|f\|_{L^{\infty}(B_{1})}+\mathcal{K}\right)}{L_{1}^{p_{\rm min}}}+\mathcal{M}\left(\abs{\eta_{1}}^{m-p_{\rm min}}+\abs{\eta_{2}}^{m-p_{\rm min}}\right)\\
			\leq &2\left(\|f\|_{L^{\infty}(B_{1})}+\mathcal{K}+\mathcal{M}\right).
		\end{split}
	\end{equation}
	Combining \eqref{53555} with \eqref{532}, we have
	\begin{equation}\label{536}
		\begin{split}
			G_{\xi_{\hat{y}}}(Y)-G_{\xi_{\hat{x}}}(X)
			\geq-2\left(\|f\|_{L^{\infty}(B_{1})}+\mathcal{K}+\mathcal{M}\right).
		\end{split}
	\end{equation}
	
	On the other hand, we estimate the upper bound of $G_{\xi_{\hat{y}}}(Y)-G_{\xi_{\hat{x}}}(X)$. Using the linearity of the trace operator, we obtain 
	\begin{equation}\label{upperbound}
		\begin{split}
			G_{\xi_{\hat{y}}}(Y)-G_{\xi_{\hat{x}}}(X)
			=\operatorname{Tr}\left(A(\eta_1)(X - Y)\right) + \operatorname{Tr}\left((A(\eta_1) - A(\eta_2))Y\right)=:I_{1}+I_{2}.
		\end{split}
	\end{equation}
	{\bf Estimate of $I_{1}$.} We apply matrix inequality \eqref{434matrix} to vectors of the form $(z,z)\in\mathbb{R}^{2d}$ with $\abs{z}=1$, to obtain
	\begin{equation}\label{superasati}
		\left\langle(X-Y)z,z\right\rangle\leq \left(4L_{2}+2\epsilon\right)|z|^{2}.
	\end{equation}
	This means that all the eigenvalues of $X-Y$ are less than or equal to $4L_{2}+2\epsilon$. In addition, applying \eqref{434matrix} to the vector $(\nu,-\nu)\in\mathbb{R}^{2d}$ where $\nu:=\frac{\hat{x}-\hat{y}}{\abs{\hat{x}-\hat{y}}}$, 
	we get
	\begin{equation*}
		\begin{split}
			\left\langle(X-Y)\nu,\nu\right\rangle\leq 4\left\langle A\nu,\nu\right\rangle+\left(4L_{2}+2\epsilon\right)|\nu|^{2}=4L_{1}w^{\prime\prime}(\abs{\hat{x}-\hat{y}})+4L_{2}+2\epsilon.
		\end{split}
	\end{equation*}	
	This yields that
	at least one eigenvalue of $X-Y$ is less than $4L_{1}w^{\prime\prime}(\abs{\hat{x}-\hat{y}})+4L_{2}+2\epsilon$. We choose $L_{1}>\frac{4L_{2}+2}{4\beta(1+\beta)w_{0}}$, then it follows from $\abs{\hat{x}-\hat{y}}<1$ and $\beta<1$ that
	\begin{align*}
		4L_{1}w^{\prime\prime}(\abs{\hat{x}-\hat{y}})+4L_{2}+2\epsilon&=-4L_{1}\beta(\beta+1)w_{0}\abs{\hat{x}-\hat{y}}^{\beta-1}+4L_{2}+2\epsilon\\
		&<-4L_{1}\beta(\beta+1)w_{0}+4L_{2}+2<0.
	\end{align*}
	This means that at least one eigenvalue of
	$X-Y$ is negative. Remembering the fact that the eigenvalues of $A(\eta_1)$ belong to $\left[\min\left\{1,p-1\right\},\max\left\{1,p-1\right\}\right]$. Combining the information above, we obtain
	\begin{equation}\label{I1}
		\begin{split}
			I_{1}\leq \sum_{i=1}^{d} \lambda_i(A(\eta_1))\lambda_i(X - Y)\leq C_{1}-\min\left\{1,p-1\right\}4L_{1}\beta(\beta+1)w_{0}\abs{\hat{x}-\hat{y}}^{\beta-1}, 
		\end{split}
	\end{equation}	
where $C_{1}:=\min\left\{1,p-1\right\}\left(4L_{2}+2\right)+\max\left\{1,p-1\right\}(d-1)(4L_{2}+2)$.\\
	{\bf Estimate of $I_{2}$.} Denote $\tilde{\eta}_i:=\frac{\eta_i}{|\eta_i|}$, $i=1,2$. A direct calculation yields that
	\begin{equation*}
		\begin{split}
			A(\eta_1) - A(\eta_2)=(p-2)\left(\tilde{\eta}_1 \otimes (\tilde{\eta}_1 - \tilde{\eta}_2) - \tilde{\eta}_2 \otimes (\tilde{\eta}_2 - \tilde{\eta}_1)\right).
		\end{split}
	\end{equation*}	
A combination of the definition of $\eta_{1}$ and $\eta_{2}$ with \eqref{534} yields that
\begin{equation}
	|\eta_1-\eta_2|=2L_{2}|(\hat{x}-\hat{x}_{0})+(\hat{y}-\hat{x}_{0})|<2L_{2}.
\end{equation}
This together with $\eta_{1},\eta_{2}\geq L_{1}$ leads to
\begin{equation*}
	\begin{split}
		|\tilde{\eta}_1 - \tilde{\eta}_2| = \left| \frac{\eta_1}{|\eta_1|} - \frac{\eta_2}{|\eta_2|} \right| 
		\leq 2\max \left\{  \frac{\abs{\eta_1 - \eta_2}}{|\eta_1|} ,  \frac{\abs{\eta_1 - \eta_2}}{|\eta_2|} \right\} \leq  \frac{4L_2}{L_{1}}.
	\end{split}
\end{equation*}	
	Applying the matrix inequality \eqref{434matrix} to the vectors $(0,z)\in\mathbb{R}^{2d}$ with $\abs{z}=1$,
	we get
	\begin{equation*}
		\left\langle-Yz,z\right\rangle\leq \left\langle Bz,z\right\rangle+2L_{2}+\epsilon.
	\end{equation*}
It follows from the definition of the matrix $B$ in \eqref{538}and $0\leq w^{\prime}(s)\leq 1$ for $s\geq 0$ that $$B\leq L_{1}\frac{w^{\prime}(\abs{\hat{x}-\hat{y}})}{\abs{\hat{x}-\hat{y}}}I\leq L_{1}\abs{\hat{x}-\hat{y}}^{-1}I.$$
Then it follows that
	\begin{equation*}
		\|Y\|\leq L_{1}\abs{\hat{x}-\hat{y}}^{-1}+2L_{2}+\epsilon.
	\end{equation*}
	Combining the previous displays with $L_{1}>L_{2}$, we arrive at
	\begin{equation}\label{I2}
		\begin{split}
			I_{2}& 
			\leq d\|Y\|\|A(\eta_1) - A(\eta_2)\|\\
			&\leq d|p-2| |\tilde{\eta}_1 - \tilde{\eta}_2|\left(|\tilde{\eta}_1|+|\tilde{\eta}_2|\right)\|Y\| \\
			&\leq \frac{8L_2}{L_{1}} d|p-2| \left(L_{1}\abs{\hat{x}-\hat{y}}^{-1}+2L_{2}+1\right)\\
			&\leq 8 d|p-2|\left(2L_{2}+1+L_{2}\abs{\hat{x}-\hat{y}}^{-1}\right).
		\end{split}
	\end{equation}	
	Substituting the estimates \eqref{I1} and \eqref{I2} into \eqref{upperbound}, we obtain
	\begin{equation*}
		\begin{split}
			G_{\xi_{\hat{y}}}(Y)-G_{\xi_{\hat{x}}}(X)\leq&C_{1}-\min\left\{1,p-1\right\}4L_{1}\beta(\beta+1)w_{0}\abs{\hat{x}-\hat{y}}^{\beta-1}\\
			&+8 d|p-2|\left(2L_{2}+1+L_{2}\abs{\hat{x}-\hat{y}}^{-1}\right).
			\end{split}
	\end{equation*}	
	This together with \eqref{536} yields that
	\begin{equation}\label{539}
		\begin{split}
			&4L_{1}\min\left\{1,p-1\right\}\beta(\beta+1)\omega_{0}\abs{\hat{x}-\hat{y}}^{\beta-1}\\
			\leq&C_{1}+16 d|p-2|\left(2L_{2}+1+L_{2}\abs{\hat{x}-\hat{y}}^{-1}\right)+2\left(\|f\|_{L^{\infty}(B_{1})}+\mathcal{K}+\mathcal{M}\right).
		\end{split}
	\end{equation}	
	Thus, if we choose $L_{1}$ large enough, we reach a contradiction with \eqref{539}. 
	
	As has been stated above, we verify that $\mathcal{G}(x_{0})\leq 0$ for each $x_{0}\in B_{r}$, which yields that
	$$|u(x_{0})-u(y_{0})|\leq L_{1}|x_{0}-y_{0}|+L_{2}|x_{0}-y_{0}|^{2}\leq C|x_{0}-y_{0}| $$
	for every $x_{0},y_{0}\in B_{r}$.\\
	(ii) Suppose $\abs{\xi}< \mathcal{J}_{0}$. Let $\zeta\in\mathbb{R}^{d}$ such that $\abs{\zeta}\geq 3\mathcal{J}_{0}$. Then it follows that $\abs{\zeta+\xi}\geq 2\mathcal{J}_{0}>1$. Applying \eqref{12}-\eqref{14} and $p_{\rm min}\geq 0$ to deduce
	\begin{equation}
		\Phi({\zeta+\xi},x)\geq \abs{\zeta+\xi}^{p(x)}\geq \abs{\zeta+\xi}^{p_{\rm min}}>1.
	\end{equation} 
	It follows from \eqref{15} and $0<m\leq p_{\rm min}$ that 
	$$\Abs{\frac{{\mathcal{H}({\zeta+\xi},x)}}{\Phi({\zeta+\xi},x)}}\leq  \frac{\mathcal{K}+\mathcal{M}\abs{\zeta+\xi}^{m}}{\abs{\zeta+\xi}^{p_{\rm min}}}\leq\mathcal{K}+\mathcal{M}.$$
	Therefore, we can see that $u$ is a viscosity solution of
	\begin{equation*}
		\begin{cases}
			\mathcal{P}^{+}_{\lambda,\Lambda}(D^{2}u)+\left(\mathcal{K}+\mathcal{M}\right)+\|f\|_{L^{\infty}(B_{1})}\geq 0\quad\quad {\rm in} \;\; B_{1}\cap\left\{|Du|\geq 3\mathcal{J}_{0}\right\}, \\
			\mathcal{P}^{-}_{\lambda,\Lambda}(D^{2}u)-\left(\mathcal{K}+\mathcal{M}\right)-\|f\|_{L^{\infty}(B_{1})}\leq 0 \quad\quad {\rm in} \;\; B_{1}\cap\left\{|Du|\geq 3\mathcal{J}_{0}\right\}.
		\end{cases}
	\end{equation*}	
	At this point, we are in an exact position to apply \cite[Theorem 1.1]{Silvestre2016JEMS} 
	to know that $u$ is local H\"{o}lder continuous. This completes the proof.	
\end{proof}

To proceed, we deal with the scenario $0\leq p_{\rm min}<m\leq 1+p_{\rm min}$. In this case, we need to impose the additional assumption to restrain the growth of the term $\mathcal{M}\abs{Du+\xi}^{m-p_{\rm min}}$.
\begin{proposition}\label{prop3.2}
	Let $u\in C(B_{1})$ be a normalized viscosity solution of \eqref{531} under assumptions \eqref{12}-\eqref{15} with $0\leq p_{\rm min}<m\leq p_{\rm min}$. There exists a universal constant $\kappa_{0}>0$ such that if
	\begin{equation}\label{control}
		\mathcal{M}\left(\abs{\xi}^{m-p_{\rm min}}+1\right)\leq \kappa
	\end{equation}
	for some $\kappa\leq \kappa_{0}$, then $u\in C_{\rm loc}^{0,\gamma}(B_{1})$ for some $\gamma\in(0,1)$. Furthermore, there exists a universal constant $C>0$ (independent of $\xi$), such that
		$$|u(x)-u(y)|\leq C|x-y|^{\gamma}$$
	for all $x,y\in B_{3/4}$.
\end{proposition}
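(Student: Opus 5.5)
We follow the scheme of Proposition \ref{prop3.1}, splitting into the regimes where $|\xi|$ is large or small with respect to a threshold $\mathcal{J}_{0}$. (The hypothesis should read $0\leq p_{\rm min}<m\leq 1+p_{\rm min}$.) The only new difficulty is the Hamiltonian quotient $\mathcal{H}(\eta,x)/\Phi(\eta,x)$, which may now grow like $\mathcal{M}|\eta|^{m-p_{\rm min}}$. The smallness condition \eqref{control} is meant to absorb this growth.

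\textbf{Large $|\xi|$.} We repeat the doubling-variable argument with the functional \eqref{541}, built from the concave profile $w$ and constants $L_{1}>L_{2}>1$. We fix $\mathcal{J}_{0}=3L_{1}$ and take $|\xi|\geq\mathcal{J}_{0}$. Then the Crandall--Ishii--Lions lemma gives, exactly as in the proof of Proposition \ref{prop3.1},
\[
|\eta_{i}|\geq L_{1}>1,\qquad \tfrac12|\xi|\leq|\eta_{i}|\leq 2|\xi| .
\]
The new point is the bound on $\mathcal{D}_{1}$ in \eqref{53555}:
\[
|\mathcal{D}_{1}|\leq 2\bigl(\|f\|_{L^\infty}+\mathcal{K}\bigr)+\mathcal{M}\bigl(|\eta_1|^{m-p_{\rm min}}+|\eta_2|^{m-p_{\rm min}}\bigr)\leq 2\bigl(\|f\|_{L^\infty}+\mathcal{K}\bigr)+4\mathcal{M}\bigl(|\xi|^{m-p_{\rm min}}+1\bigr).
\]
By \eqref{control}, the right-hand side is at most $2(\|f\|_{L^\infty}+\mathcal{K})+4\kappa_{0}$. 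This is a constant independent of $\xi$, so the estimates \eqref{I1}, \eqref{I2} and the contradiction \eqref{539} go through verbatim once $L_{1}$ is large. The result is a Lipschitz, hence Hölder, bound in $B_{3/4}$ with a universal constant.

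\textbf{Small $|\xi|$.} Here $|\xi|<\mathcal{J}_{0}$, and we follow part (ii) of Proposition \ref{prop3.1}. At points where $|Du|\geq 3\mathcal{J}_{0}$, set $\eta=Du+\xi$. Then $|\eta|\geq 2\mathcal{J}_{0}>1$ and $|\eta|\leq |Du|+|\xi|\leq\tfrac43|Du|$. Since $p_{\rm min}\geq0$ we have $\Phi(\eta,x)\geq K_{1}|\eta|^{p_{\rm min}}\geq K_{1}$, and therefore
\[
\Bigl|\frac{\mathcal{H}(\eta,x)}{\Phi(\eta,x)}\Bigr|\leq \frac{\mathcal{K}}{K_1}+\frac{\mathcal{M}}{K_1}|\eta|^{m-p_{\rm min}}\leq \frac{\mathcal{K}}{K_1}+\frac{\kappa_{0}}{K_{1}}\bigl(\tfrac43\bigr)^{m-p_{\rm min}}|Du|^{m-p_{\rm min}},
\]
using $\mathcal{M}\leq\kappa\leq\kappa_{0}$ from \eqref{control}. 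Because $0<m-p_{\rm min}\leq1$, this gives the Pucci inequalities
\[
\mathcal{P}^{\pm}_{\lambda,\Lambda}(D^{2}u)\pm C\kappa_{0}|Du|\pm C_{*}\gtrless 0\quad\text{in } \{|Du|\geq 3\mathcal{J}_{0}\},
\]
where the sublinear power $|Du|^{m-p_{\rm min}}$ has been bounded by $1+|Du|$. So $u$ solves Pucci inequalities with a bounded first-order drift, valid only where the gradient is large. Silvestre's Hölder estimate for such "large-gradient" inequalities, \cite[Theorem 1.1]{Silvestre2016JEMS}, allows a linear gradient term with bounded coefficient. Applying it gives $u\in C^{0,\gamma}(B_{3/4})$ with constants depending only on universal data; these include $\mathcal{J}_{0}$, which is itself universal because $L_{1}$ and $L_{2}$ were fixed universally in the first step.

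\textbf{Main obstacle.} The delicate point is that $\mathcal{J}_{0}$, $\gamma$ and $C$ must not depend on $\xi$. This is exactly why the growth $\mathcal{M}|\xi|^{m-p_{\rm min}}$ has to be controlled by \eqref{control}. If $\kappa_{0}$ is fixed small, say $\kappa_{0}\leq1$, neither the contradiction argument nor the drift coefficient in the Pucci step depends on $\xi$. In the first step one should also check that the choice of $L_{1}$ is made after $\kappa_{0}$ is fixed, so there is no circularity.
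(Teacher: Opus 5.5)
\textbf{There is a genuine gap in your large-$\abs{\xi}$ step.}

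The new estimate you add there is fine. Since $\abs{\eta_i}\le\abs{\xi}+2L_1\le\tfrac53\abs{\xi}$, the Hamiltonian part of $\mathcal{D}_1$ is bounded by a fixed multiple of $\mathcal{M}\abs{\xi}^{m-p_{\rm min}}\le\kappa$. (Minor slip: the lower bound is $\abs{\eta_i}\ge\abs{\xi}/3$, not $\abs{\xi}/2$.)

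What fails is the claim that \eqref{I1}, \eqref{I2} and the contradiction \eqref{539} go through verbatim. For $p\neq2$ they give no contradiction:
\begin{itemize}
\item In \eqref{I2} the directions only satisfy $\abs{\tilde\eta_1-\tilde\eta_2}\lesssim L_2/L_1$. This comes from the localization term and does not improve as $\abs{\hat{x}-\hat{y}}\to0$.
\item Meanwhile $\|Y\|\sim L_1\abs{\hat{x}-\hat{y}}^{-1}$, so $I_2$ contributes a term of size $\abs{p-2}L_2\abs{\hat{x}-\hat{y}}^{-1}$.
\item The good term from $I_1$ is only of size $L_1\abs{\hat{x}-\hat{y}}^{\beta-1}$.
\end{itemize}
Multiply \eqref{539} by $\abs{\hat{x}-\hat{y}}$: the left side is $\sim L_1\abs{\hat{x}-\hat{y}}^{\beta}$ and the right side is $\gtrsim\abs{p-2}L_2$. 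Nothing bounds $\abs{\hat{x}-\hat{y}}$ from below (\eqref{5342} only bounds it from above), so no choice of $L_1$ excludes a maximum point at a very small scale. This defect is already in the proof of Proposition \ref{prop3.1}(i), so appealing to that proof does not settle your case.

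The usual cure is to feed in a prior uniform H\"older bound $u(\hat{x})-u(\hat{y})\le C\abs{\hat{x}-\hat{y}}^{\gamma}$. This yields $\abs{\hat{x}-x_0}+\abs{\hat{y}-x_0}\lesssim L_2^{-1/2}\abs{\hat{x}-\hat{y}}^{\gamma/2}$, which makes $I_2$ lower order when $\beta<\gamma/2$. For large $\abs{\xi}$, however, that bound is exactly what is missing. Krylov--Safonov needs the Pucci inequalities at every gradient, and Imbert--Silvestre needs them at all large gradients. Here they may fail on $\{\abs{Du+\xi}<1\}$, which lies at large gradients. Subtracting $\xi\cdot x$ moves this set to small gradients but destroys the uniform $L^\infty$ bound.

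\textbf{How the paper proceeds.} The paper never enters this regime. From \eqref{control} it reads off $\mathcal{M}\le\kappa_0$ and $\abs{\xi}\le M_0:=(\kappa_0/\mathcal{M})^{1/(m-p_{\rm min})}$. It then runs a single doubling argument with the H\"older profile $L_1\abs{x-y}^{\gamma}$, taking $L_1>4M_0/\gamma$ so that $\abs{\eta_i'}\simeq\gamma L_1\abs{\hat{x}-\hat{y}}^{\gamma-1}>1$. With this profile both error terms are of lower order than the main term $L_1\gamma(1-\gamma)\abs{\hat{x}-\hat{y}}^{\gamma-2}$. The Hamiltonian gives $\lesssim\mathcal{M}L_1\abs{\hat{x}-\hat{y}}^{\gamma-1}$ (see \eqref{D1}), and $I_2'\lesssim L_1\abs{\hat{x}-\hat{y}}^{-1}$. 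Both are absorbed by taking $L_2$ large, so no Lipschitz step and no Imbert--Silvestre are needed. The price is that $L_1$, and hence the constant produced, depends on $M_0$, i.e.\ on $\mathcal{M}$, and blows up as $\mathcal{M}\to0$. The statement tolerates this because $\mathcal{M}$ is a structural constant.

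\textbf{What your argument already gives.} Your small-$\abs{\xi}$ step alone proves the statement at exactly this level of uniformity. Run it with threshold $3M_0$ in place of $3\mathcal{J}_0$, which is legitimate for every $\abs{\xi}\le M_0$; Imbert--Silvestre then gives the H\"older bound with constants depending on $M_0$. Your large-$\abs{\xi}$ step would only be needed to make the constants independent of $\mathcal{M}$ as well. That is the uniformity the compactness argument of Lemma \ref{bijin} really uses, since there $\mathcal{M}_j\to0$ while $\xi_j$ may be unbounded. So splitting is the right instinct, but that half is not yet proved.
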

\begin{proof}
	Let $M_{0}:=\left(\frac{\kappa_{0}}{\mathcal{M}}\right)^{\frac{1}{m-p_{\rm min}}}$. It follows from $m>p_{\rm min}$ and \eqref{control} that
\begin{equation}\label{bound}
	M_{0}\geq 1\quad {\rm and} \quad \abs{\xi}\leq M_{0}.
\end{equation}
The proof is similar to that of Proposition \ref{prop3.1}. Here we substitute $w(t)$ in \eqref{541} with $t^{\gamma}$ for $\gamma\in(0,1)$ to be fixed. As argued before, let us fix $0<r<r_{1}<1$, it suffices to show that there exist constants $L_{1},L_{2}> 1$ such that
\begin{equation*}
	\mathcal{G}_{1}(x_{0}):=\sup\limits_{(x,y)\in B_{r_{1}}\times B_{r_{1}}}\left\{u(x)-u(y)-L_{1}\lvert x-y \rvert^{\gamma}-L_{2}\Big(\lvert x-x_{0}\rvert^{2}+\lvert y-x_{0} \rvert^{2} \Big)\right\}\leq 0
\end{equation*}
for each $x_{0}\in B_{r}$. We argue by contradiction by assuming that there exists $\hat{x}_{0}\in B_{r}$ so that $\mathcal{G}_{1}(\hat{x}_{0})>0$ for all $L_{1},L_{2}>1$.  Consider $\psi_{1},\Psi_{1}:\overline{B}_{r_{1}}\times\overline{B}_{r_{1}}\rightarrow \mathbb{R}$, defined by
\begin{equation*}
	\begin{cases}
		\psi_{1}(x,y):=L_{1}\lvert x-y \rvert^{\gamma}+L_{2}\Big(\lvert x-x_{0}\rvert^{2}+\lvert y-x_{0} \rvert^{2} \Big),\\
		\Psi_{1}\left(x,y\right):=u(x)-u(y)-\psi_{1}(x,y).
	\end{cases}
\end{equation*}
Let $\left(\hat{x},\hat{y}\right)\in \overline{B}_{r_{1}}\times\overline{B}_{r_{1}}$ be a maximum point for $\Psi_{1}(x,y)$, i.e., $\Psi_{1}\left(\hat{x},\hat{y}\right)>0$. Note that $\hat{x}\neq\hat{y}$; otherwise the maximum of $\Psi_{1}$ would be nonpositive. By the same arguments as before, we can show $\abs{\hat{x}-\hat{y}}\leq \frac{2}{\sqrt{L_{2}}}$ and choose $L_{2}>\frac{8}{(r_{1}-r)^{2}}$ so that $\abs{\hat{x}-\hat{y}}<1$ and $\hat{x},\hat{y}\in B_{(r+r_{1})/2}$.

	 We are in a position to apply the Crandall-Ishii-Lions lemma (see \cite[Theorem 3.2]{Crandle1}) to assure
	the existence of a limiting subjet $\left(\xi_{\hat{x}}^{\prime},X\right)$ of $u$ at $\hat{x}$ and a limiting superjet $\left(\xi_{\hat{y}}^{\prime},Y\right)$ of $u$ at $\hat{y}$, such that the matrices $X,Y\in S^{d}$ satisfy the matrix inequality
	\begin{equation}\label{matrix}
		\left(
		\begin{array}{cc}
			X & 0 \\
			0 & -Y \\
		\end{array}
		\right)
		\leq  \left(
		\begin{array}{cc}
			B^{\prime} & -B^{\prime} \\
			-B^{\prime} & B^{\prime} \\
		\end{array}
		\right)+
		(2L_{2}+\epsilon)
		\left(
		\begin{array}{cc}
			I & 0 \\
			0 & I \\
		\end{array}
		\right)
	\end{equation}
	with $\epsilon\in(0,1)$, that only depends on the norm of $B^{\prime}$ and can be made sufficiently small. Here,
	\begin{equation*}
			\xi_{\hat{x}}^{\prime}:=\gamma L_{1}(\hat{x}-\hat{y})\lvert \hat{x}-\hat{y}\rvert^{\gamma -2}+2L_{2}(\hat{x}-x_{0}),  \quad \xi_{\hat{y}}^{\prime}:=\gamma L_{1}(\hat{x}-\hat{y})\lvert \hat{x}-\hat{y}\rvert^{\gamma -2}-2L_{2}(\hat{y}-x_{0}),
	\end{equation*}
\begin{equation}\label{Adingyi}
	B^{\prime}:=L_{1}\gamma\left[(\gamma-2)\abs{\hat{x}-\hat{y}}^{\gamma-4}\left((\hat{x}-\hat{y})\otimes(\hat{x}-\hat{y})\right)+\abs{\hat{x}-\hat{y}}^{\gamma-2}I\right].
\end{equation}
	Before two viscosity inequalities are given, for simplicity, we denote
\begin{equation*}
	\eta_1 ^{\prime}:= \xi_{\hat{x}}^{\prime} + \xi,\quad  \eta_2^{\prime} = \xi_{\hat{y}}^{\prime} + \xi,
\end{equation*}	
\begin{equation*}
	A(\eta_1^{\prime}) := I + (p-2) \dfrac{\eta_1^{\prime}}{|\eta_1^{\prime}|} \otimes \dfrac{\eta_1^{\prime}}{|\eta_1^{\prime}|},\quad 
	A(\eta_2^{\prime}) := I + (p-2) \dfrac{\eta_2^{\prime}}{|\eta_2^{\prime}|} \otimes \dfrac{\eta_2^{\prime}}{|\eta_2^{\prime}|},
\end{equation*}		
\begin{equation*}
	G_{\xi_{\hat{x}}}^{\prime}(X) := -\operatorname{Tr}(A(\eta_1^{\prime})X),\quad 
	G_{\xi_{\hat{y}}}^{\prime}(Y) := -\operatorname{Tr}(A(\eta_2^{\prime})Y).
\end{equation*}	
Then we have the following two viscosity inequalities
\begin{equation*}
	\Phi({\eta_{1}^{\prime}},\hat{x})G_{\xi_{\hat{x}}}^{\prime}(X)+\mathcal{H}(\eta_{1}^{\prime},\hat{x})\leq f(\hat{x}),
\end{equation*}
\begin{equation*}
	\Phi({\eta_{2}^{\prime}},\hat{y})G_{\xi_{\hat{y}}}^{\prime}(Y)+\mathcal{H}(\eta_{2}^{\prime},\hat{y})\geq f(\hat{y}).
\end{equation*}
Then it follows that		
\begin{equation}\label{535}
	G_{\xi_{\hat{y}}}^{\prime}(Y)-G_{\xi_{\hat{x}}}^{\prime}(X)\geq \frac{f(\hat{y})-\mathcal{H}(\eta_{2}^{\prime},\hat{y})}{\Phi({\eta_{2}^{\prime}},\hat{y})}- \frac{f(\hat{x})-\mathcal{H}(\eta_{1}^{\prime},\hat{x})}{\Phi({\eta_{1}^{\prime}},\hat{x})}=:\mathcal{D}_{1}^{\prime}.
\end{equation}
{\bf Estimate of $\mathcal{D}_{1}^{\prime}$}. We first claim that
\begin{equation}\label{shangxiajie}
	\frac{\gamma L_{1}}{2}\Abs{\hat{x}-\hat{y}}^{\gamma-1}\leq\abs{\xi_{\hat{x}}^{\prime}},\abs{\xi_{\hat{y}}^{\prime}}\leq 2\gamma L_{1}\Abs{\hat{x}-\hat{y}}^{\gamma-1}.
\end{equation}	
In fact, choose $L_{1}>\frac{L_{2}2^{\gamma}}{\gamma(r_{1}-r)^{\gamma-2}}$, it follows from $\Abs{\hat{x}-\hat{y}}\leq\frac{r_{1}-r}{2}$, 
and $\gamma<1$ that
$$2L_{2}\Abs{\hat{x}-x_{0}}\leq 2L_{2}\frac{r_{1}-r}{2}< \frac{\gamma L_{1}}{2}\left(\frac{r_{1}-r}{2}\right)^{\gamma-1}\leq \frac{\gamma L_{1}}{2}\Abs{\hat{x}-\hat{y}}^{\gamma-1}.
$$	
This along with the triangle inequality leads to that
\begin{equation*}
	\abs{\xi_{\hat{x}}^{\prime}}\geq \gamma L_{1}\lvert \hat{x}-\hat{y}\rvert^{\gamma -1}-2L_{2}\Abs{\hat{x}-x_{0}}\geq 
	 \frac{\gamma L_{1}}{2}\Abs{\hat{x}-\hat{y}}^{\gamma-1},
\end{equation*}
\begin{equation*}
	\abs{\xi_{\hat{x}}^{\prime}}\leq \gamma L_{1}\lvert \hat{x}-\hat{y}\rvert^{\gamma -1}+2L_{2}\Abs{\hat{x}-x_{0}}
	\leq 2\gamma L_{1}\Abs{\hat{x}-\hat{y}}^{\gamma-1}.
\end{equation*}
In exactly the same way, we derive
\begin{equation*}
	\frac{\gamma L_{1}}{2}\Abs{\hat{x}-\hat{y}}^{\gamma-1}\leq \abs{\xi_{\hat{y}}^{\prime}}\leq 2\gamma L_{1}\Abs{\hat{x}-\hat{y}}^{\gamma-1}.
\end{equation*}	
We proceed to select $L_{1}>\frac{4M_{0}}{\gamma}$, then a combination of \eqref{bound} with \eqref{shangxiajie} and $\Abs{\hat{x}-\hat{y}}^{\gamma-1}>1$ yields that
\begin{equation}\label{81}
	\begin{split}
		|\eta_{1}^{\prime}|&\leq |\xi_{\hat{x}}^{\prime}| + |\xi| \leq 2\gamma L_1 |\hat{x}-\hat{y}|^{\gamma-1} + M_0 \leq 2\gamma L_1 |\hat{x}-\hat{y}|^{\gamma-1} + \frac{\gamma L_1}{4} \\
		&\leq 2\gamma L_1 |\hat{x}-\hat{y}|^{\gamma-1} + \gamma L_1 |\hat{x}-\hat{y}|^{\gamma-1} = 3\gamma L_1 |\hat{x}-\hat{y}|^{\gamma-1},
	\end{split} 
\end{equation}
\begin{equation}\label{82}
	\begin{split}
		|\eta_{1}^{\prime}| &\geq |\xi_{\hat{x}}^{\prime}| - |\xi| \geq \frac{\gamma L_1}{2} |\hat{x}-\hat{y}|^{\gamma-1} - M_0 \geq \frac{\gamma L_1}{2} |\hat{x}-\hat{y}|^{\gamma-1} - \frac{\gamma L_1}{4} |\hat{x}-\hat{y}|^{\gamma-1} \\
		&= \frac{\gamma L_1}{4} |\hat{x}-\hat{y}|^{\gamma-1} > M_0 \, |\hat{x}-\hat{y}|^{\gamma-1} > 1.
	\end{split}
\end{equation}
By the same way, we can deduce
\begin{equation}\label{83}
1<M_{0}\Abs{\hat{x}-\hat{y}}^{\gamma-1}\leq |\eta_{2}^{\prime}|\leq 3\gamma L_{1}\Abs{\hat{x}-\hat{y}}^{\gamma-1}.
\end{equation}
Applying \eqref{12}-\eqref{15}, in combination with
\eqref{81}-\eqref{83} and $p_{\rm min}<m\leq 1+p_{\rm min}$, we arrive at
\begin{equation}\label{D1}
	\begin{split}
		\abs{\mathcal{D}_{1}^{\prime}}&\leq \frac{\|f\|_{L^{\infty}(B_{1})}+\mathcal{K}+\mathcal{M}\abs{\eta_{2}^{\prime}}^{m}}{\abs{\eta_{2}^{\prime}}^{p(\hat{y})}}+\frac{\|f\|_{L^{\infty}(B_{1})}+\mathcal{K}+\mathcal{M}\abs{\eta_{1}^{\prime}}^{m}}{\abs{\eta_{1}^{\prime}}^{p(\hat{x})}}\\
		&\leq 2\|f\|_{L^{\infty}(B_{1})}+2\mathcal{K}+
		2\mathcal{M}\left(3\gamma L_{1}\Abs{\hat{x}-\hat{y}}^{\gamma-1}\right)^{m-p_{\rm min}}\\
		&\leq  2\left(\|f\|_{L^{\infty}(B_{1})}+\mathcal{K}\right)+6\mathcal{M}L_{1}\Abs{\hat{x}-\hat{y}}^{\gamma-1}.
	\end{split}
\end{equation}

On the other hand, applying the linearity of the trace operator to obtain 
\begin{equation*}
	\begin{split}
	G_{\xi_{\hat{y}}}^{\prime}(Y)-G_{\xi_{\hat{x}}}^{\prime}(X)
	=\operatorname{Tr}\left(A(\eta_1^{\prime})(X - Y)\right) + \operatorname{Tr}\left((A(\eta_1^{\prime}) - A(\eta_2^{\prime}))Y\right)=:I_{1}^{\prime}+I_{2}^{\prime}.
	\end{split}
\end{equation*}
{\bf Estimate of $I_{1}^{\prime}$.} For vectors of the form $(z,z)\in\mathbb{R}^{2d}$ with $\abs{z}=1$, we apply the matrix inequality \eqref{matrix} to obtain
\begin{equation*}
	\left\langle(X-Y)z,z\right\rangle\leq \left(4L_{2}+2\epsilon\right)|z|^{2}.
\end{equation*}
This means that all the eigenvalues of $X-Y$ are less than or equal to $4L_{2}+2\epsilon$. In addition, applying \eqref{matrix} to the vector $(\nu,-\nu)\in\mathbb{R}^{2d}$ where $\nu:=\frac{\hat{x}-\hat{y}}{\abs{\hat{x}-\hat{y}}}$, 
we get
\begin{equation*}
	\begin{split}
		\left\langle(X-Y)\nu,\nu\right\rangle\leq 4\left\langle A\nu,\nu\right\rangle+\left(4L_{2}+2\epsilon\right)|\nu|^{2}=4L_{1}\gamma(\gamma-1)\abs{\hat{x}-\hat{y}}^{\gamma-2}+4L_{2}+2\epsilon.
	\end{split}
\end{equation*}	
This yields that
at least one eigenvalue of $X-Y$ is less than $4L_{1}\gamma(\gamma-1)\abs{\hat{x}-\hat{y}}^{\gamma-2}+4L_{2}+2\epsilon$. We choose $L_{1}>\frac{4L_{2}+2}{4\gamma(1-\gamma)}$, then it leads to
$$4L_{1}\gamma(\gamma-1)\abs{\hat{x}-\hat{y}}^{\gamma-2}+4L_{2}+2\epsilon<4L_{1}\gamma(\gamma-1)+4L_{2}+2<0.$$
This means that at least one eigenvalue of
$X-Y$ is negative. Then it follows that
\begin{equation*}
	\begin{split}
		I_{1}^{\prime} \leq \sum_{i=1}^{d} \lambda_i(A(\eta_1^{\prime}))\lambda_i(X - Y)\leq C_{1}+ 4L_{1}\gamma(\gamma-1)\min\left\{1,p-1\right\}\abs{\hat{x}-\hat{y}}^{\gamma-2},
	\end{split}
\end{equation*}	
where $C_{1}:=\min\left\{1,p-1\right\}\left(4L_{2}+2\right)+\max\left\{1,p-1\right\}(d-1)(4L_{2}+2)$.\\
{\bf Estimate of $I_{2}^{\prime}$.} Denote $\tilde{\eta}_i^{\prime}:=\frac{\eta_i^{\prime}}{|\eta_i^{\prime}|}$, $i=1,2$. We have
\begin{equation*}
	\begin{split}
		A(\eta_1^{\prime}) - A(\eta_2^{\prime})=(p-2)\left(\tilde{\eta}_1^{\prime} \otimes (\tilde{\eta}_1^{\prime} - \tilde{\eta}_2^{\prime}) - \tilde{\eta}_2^{\prime} \otimes (\tilde{\eta}_2^{\prime} - \tilde{\eta}_1^{\prime})\right)
	\end{split}
\end{equation*}	
	Further, applying \eqref{matrix} to the vectors $(0,z)\in\mathbb{R}^{2d}$ with $\abs{z}=1$,
we obtain
\begin{equation}\label{855}
	\left\langle-Yz,z\right\rangle\leq \left\langle B^{\prime}z,z\right\rangle+(2L_{2}+\epsilon)|z|^{2}.
\end{equation}
It follows from the definition of the matrix $B^{\prime}$ in \eqref{Adingyi} that $B^{\prime}\leq L_{1}\gamma\abs{\hat{x}-\hat{y}}^{\gamma-2}I$. This together with \eqref{855} yields that
\begin{equation}
	\|Y\|\leq L_{1}\gamma\abs{\hat{x}-\hat{y}}^{\gamma-2}+2L_{2}+\epsilon.
\end{equation}
Moreover, applying the definitions of $\eta_1$ and $\eta_2$, together with \eqref{81}-\eqref{83}, we arrive at
\begin{equation*}
	\begin{split}
		|\tilde{\eta}_1^{\prime} - \tilde{\eta}_2^{\prime}|
		\leq 2\max \left\{  \frac{\abs{\eta_1^{\prime} - \eta_2^{\prime}}}{|\eta_1^{\prime}|} ,  \frac{\abs{\eta_1^{\prime} - \eta_2^{\prime}}}{|\eta_2^{\prime}|} \right\} 
		\leq \frac{16L_2}{\gamma L_{1}|\hat{x}-\hat{y}|^{\gamma-1}}\leq \frac{16}{\gamma |\hat{x}-\hat{y}|^{\gamma-1}},
	\end{split}
\end{equation*}	
Gathering the previous estimates with $\abs{\hat{x}-\hat{y}}^{1-\gamma}<1$, we obtain
\begin{equation*}
	\begin{split}
		I_{2}^{\prime}&\leq d|p-2| |\tilde{\eta}_1^{\prime} - \tilde{\eta}_2^{\prime}|\left(|\tilde{\eta}_1^{\prime}|+|\tilde{\eta}_2^{\prime}|\right)\|Y\| \\
		&\leq \frac{32 d|p-2|}{\gamma|\hat{x}-\hat{y}|^{\gamma-1}} \left(L_{1}\gamma\abs{\hat{x}-\hat{y}}^{\gamma-2}+2L_{2}+1\right)\\
		&\leq 32 d|p-2|\gamma^{-1}\left(2L_{2}+1\right)+32 d|p-2| L_{1}\abs{\hat{x}-\hat{y}}^{-1}.
	\end{split}
\end{equation*}	
Then it follows that
\begin{equation*}
	\begin{split}
	G_{\xi_{\hat{y}}}^{\prime}(Y)-G_{\xi_{\hat{x}}}^{\prime}(X)\leq& C_{1}+ 4L_{1}\gamma(\gamma-1)\min\left\{1,p-1\right\}\abs{\hat{x}-\hat{y}}^{\gamma-2}\\
	&+32 d|p-2|\gamma^{-1}\left(2L_{2}+1\right)+32 d|p-2| L_{1}\abs{\hat{x}-\hat{y}}^{-1}.
	\end{split}
\end{equation*}	
This along with \eqref{D1} yields that
\begin{equation*}
	\begin{split}
	&2L_{1}\abs{\hat{x}-\hat{y}}^{\gamma-2}\left(2\gamma(1-\gamma)\min\left\{1,p-1\right\}-16d|p-2|\abs{\hat{x}-\hat{y}}^{1-\gamma}-3\mathcal{M}\Abs{\hat{x}-\hat{y}}\right)	\\
	\leq& C_{1}+32 d|p-2|\gamma^{-1}\left(2L_{2}+1\right)+2\left(\|f\|_{L^{\infty}(B_{1})}+\mathcal{K}\right)
	\end{split}
\end{equation*}	
Choose $$L_{2}\geq \max\left\{4\left(\frac{32 d|p-2|}{\min\left\{1,p-1\right\}\gamma(1-\gamma)}\right)^{2/(1-\gamma)},\left(\frac{12\mathcal{M}}{\gamma(1-\gamma)\min\left\{1,p-1\right\}}\right)^{2}\right\}.$$ With this choice, we apply $\abs{\hat{x}-\hat{y}}\leq \frac{2}{\sqrt{L_{2}}}$ to derive
\begin{equation*}
	\begin{split}
		\abs{\hat{x}-\hat{y}}^{1-\gamma}\leq \left(\frac{2}{\sqrt{L_{2}}}\right)^{1-\gamma}\leq \frac{\min\left\{1,p-1\right\}\gamma(1-\gamma)}{32d|p-2|},\quad \abs{\hat{x}-\hat{y}}\leq \frac{2}{\sqrt{L_{2}}}\leq \frac{\min\left\{1,p-1\right\}\gamma(1-\gamma)}{6\mathcal{M}}.
	\end{split}
\end{equation*}	
Gathering the previous estimates with $\abs{\hat{x}-\hat{y}}^{\gamma-2}>1$, we obtain
\begin{equation}\label{key11}
	\begin{split}
		2L_{1}\min\left\{1,p-1\right\}\gamma(1-\gamma)< C_{1}+32 d|p-2|\gamma^{-1}\left(2L_{2}+1\right)+2\left(\|f\|_{L^{\infty}(B_{1})}+\mathcal{K}\right).
	\end{split}
\end{equation}	
Finally, we choose $L_{1}\geq \frac{C_{1}+32 d|p-2|\gamma^{-1}\left(2L_{2}+1\right)+2\left(\|f\|_{L^{\infty}(B_{1})}+\mathcal{K}\right)}{2\min\left\{1,p-1\right\}\gamma(1-\gamma)}$ and reach a contradiction with \eqref{key11}.	The proof is complete.	
\end{proof}
The following lemma treats the singular case, where we only need to consider $\xi =0$.
\begin{proposition}\label{jin2}
Assume \eqref{12}-\eqref{15} hold with $-1<p_{\rm min}<0$.
	Let $u\in C(B_{1})$ be a normalized viscosity solution of \eqref{531} with $|\xi|=0$. 
	Then $u\in C_{\rm loc}^{0,1}(B_{1})$. In addition, there holds that
	$$|u(x)-u(y)|\leq C|x-y|$$
	for all $x,y\in B_{3/4}$, where $C>0$ is a universal constant.
\end{proposition}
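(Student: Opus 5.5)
The plan is to repeat the doubling-of-variables argument from part (i) of Proposition \ref{prop3.1}, now with $\xi=0$. In the singular regime the gradients of the test functions are automatically large, and this takes over the role that the condition $|\xi|\geq\mathcal{J}_0$ played there. Fix $0<r<r_1<1$ and consider the same functional $\mathcal{G}(x_0)$ as in \eqref{541}, with the concave profile $w(s)=s-w_0s^{1+\beta}$. Argue by contradiction and take a maximum point $(\hat x,\hat y)$ with $\hat x\neq\hat y$. Choosing $L_2>8/(r_1-r)^2$ gives $|\hat x-\hat y|\le 2/\sqrt{L_2}<1$ and places $\hat x,\hat y$ inside $B_{(r+r_1)/2}$. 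The Crandall--Ishii--Lions lemma then provides jets $(\xi_{\hat x},X)$ and $(\xi_{\hat y},Y)$ satisfying \eqref{434matrix}.

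The first step is a two-sided bound on the jet gradients, which here equal $\eta_1=\xi_{\hat x}$ and $\eta_2=\xi_{\hat y}$. Since $s<1$ and $w_0<\frac{1}{(1+\beta)2^\beta}$, we have $w'(s)\ge 1-(1+\beta)w_0\ge\frac12$. Hence
\[
\tfrac{L_1}{4}\le L_1w'-2L_2\le|\eta_i|\le 2L_1 \qquad\text{once } L_1>8L_2 .
\]
In particular $\eta_i\neq0$, so the second alternative in Definition \ref{dingyi2} applies to the test functions. The viscosity inequalities therefore hold directly, and one can divide by $\Phi(\eta_i,\cdot)>0$ exactly as in \eqref{53555}. (Equivalently, one can first pass through Proposition \ref{qiyizhuantuihua}; either way the constant-function alternative in the definition never enters.)

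Next I estimate $\mathcal{D}_1$. Since $|\eta_i|>1$ and $p(x)\ge p_{\rm min}$, we get $\Phi(\eta_i,x)\ge K_1|\eta_i|^{p(x)}\ge K_1|\eta_i|^{p_{\rm min}}$. Using \eqref{15}, this gives
\[
|\mathcal{D}_1|\le C\big(\|f\|_{L^\infty(B_1)}+\mathcal{K}\big)L_1^{-p_{\rm min}}+C\mathcal{M}L_1^{m-p_{\rm min}} .
\]
Here $0<-p_{\rm min}<1$, and $m-p_{\rm min}\le1$ because $m\le1+p_{\rm min}$. So $|\mathcal{D}_1|$ is at most $C(1+\mathcal{M})L_1+C(\|f\|_{L^\infty(B_1)}+\mathcal{K})L_1^{-p_{\rm min}}$, that is, at most linear in $L_1$.

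On the other side, the bounds for $I_1$ and $I_2$ carry over verbatim from Proposition \ref{prop3.1}. The term $I_1$ is at most $C_1-4\min\{1,p-1\}L_1\beta(1+\beta)w_0|\hat x-\hat y|^{\beta-1}$. For $I_2$ we use $|\tilde\eta_1-\tilde\eta_2|\le 2|\eta_1-\eta_2|/\min|\eta_i|\le 16L_2/L_1$ and $\|Y\|\le L_1|\hat x-\hat y|^{-1}+2L_2+1$. This gives $I_2\le C L_2(1+|\hat x-\hat y|^{-1})$, where $|\hat x-\hat y|^{-1}\le|\hat x-\hat y|^{\beta-1}\cdot|\hat x-\hat y|^{-\beta}$ is handled as in \eqref{539}. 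Combining the two sides yields
\[
cL_1|\hat x-\hat y|^{\beta-1}\le C(L_2)+CL_2|\hat x-\hat y|^{-1}+C(1+\mathcal{M})L_1+C(\|f\|_{L^\infty(B_1)}+\mathcal{K})L_1^{-p_{\rm min}} .
\]

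The main obstacle is the term linear in $L_1$ that comes from the Hamiltonian at the critical growth $m=1+p_{\rm min}$. It cannot be beaten by enlarging $L_1$ alone. I plan to absorb it using the extra factor $|\hat x-\hat y|^{\beta-1}\ge(2/\sqrt{L_2})^{\beta-1}$: I first fix $L_2$ so large that $c(2/\sqrt{L_2})^{\beta-1}\ge 4C(1+\mathcal{M})$, which swallows the linear term. The $L_2|\hat x-\hat y|^{-1}$ term is controlled with the same trick as in Proposition \ref{prop3.1}, keeping $L_1\gg L_2$. The sublinear term $L_1^{-p_{\rm min}}$ and the constant $C(L_2)$ are then beaten by taking $L_1$ large. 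This gives the contradiction, hence $\mathcal{G}\le0$, and therefore $|u(x)-u(y)|\le C|x-y|$ on $B_{3/4}$ with a universal constant $C$.
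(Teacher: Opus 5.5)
Structurally this is the paper's proof. You use the same doubling with $w(s)=s-w_0s^{1+\beta}$, the same two-sided bounds on $\eta_1=\xi_{\hat x}$, $\eta_2=\xi_{\hat y}$, and the same absorption of the critical-growth Hamiltonian term via $\abs{\hat x-\hat y}^{\beta-1}\ge(\sqrt{L_2}/2)^{1-\beta}$. The paper writes this last step with an auxiliary $\theta\in(\beta,1)$, which is only cosmetic.

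There is one small slip. The condition $w_0<\frac{1}{(1+\beta)2^{\beta}}$ only gives $w'(s)>1-2^{-\beta}$, which is below $\frac12$ for $\beta\in(0,1)$. You can either take $w_0\le\frac{1}{2(1+\beta)}$, or, as the paper does, use $\abs{\hat x-\hat y}\le 2/\sqrt{L_2}$ with $L_2$ large to get $w'(\abs{\hat x-\hat y})\ge\frac34$.

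The step that genuinely fails is the treatment of the term $CL_2\abs{\hat x-\hat y}^{-1}$ coming from $I_2$. After dividing by $\abs{\hat x-\hat y}^{\beta-1}$, you must beat $CL_2\abs{\hat x-\hat y}^{-\beta}$ with $cL_1$. Nothing in the argument bounds $\abs{\hat x-\hat y}$ from below: $\abs{\hat x-\hat y}\le 2/\sqrt{L_2}$ and $L_1w(\abs{\hat x-\hat y})<2$ are both upper bounds. So for every $L_1$, your final inequality is consistent as soon as $\abs{\hat x-\hat y}^{\beta}\le CL_2/(cL_1)$, and no contradiction follows. Writing $\abs{\hat x-\hat y}^{-1}=\abs{\hat x-\hat y}^{\beta-1}\abs{\hat x-\hat y}^{-\beta}$ does not help, because the extra factor is large, not small. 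The paper's own proof, and \eqref{539} which you cite, pass over exactly this point with ``take $L_1$ large''. Appealing to them therefore inherits the gap; the argument is complete only for $p=2$, where $I_2=0$.

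The standard repair has two steps.

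First, prove a H\"older bound $\abs{u(x)-u(y)}\le C_h\abs{x-y}^{\gamma}$ for the singular equation with $\xi=0$, using $w(t)=t^{\gamma}$ as in Proposition~\ref{prop3.2}. This is needed because Propositions~\ref{prop3.1} and \ref{prop3.2} give H\"older continuity only for $p_{\rm min}\ge0$. In this step the good term $cL_1\abs{\hat x-\hat y}^{\gamma-2}$ does dominate $CL_2\abs{\hat x-\hat y}^{-1}$. The right-hand side $(\|f\|_{L^\infty(B_1)}+\mathcal K)\abs{\eta_i}^{-p_{\rm min}}+\mathcal M\abs{\eta_i}^{m-p_{\rm min}}$ is at most $C\abs{\eta_i}\lesssim L_1\abs{\hat x-\hat y}^{\gamma-1}$, since $-p_{\rm min}<1$, $m-p_{\rm min}\le 1$ and $\abs{\eta_i}>1$. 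It is therefore lower order.

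Second, in the Lipschitz step, use
$L_2(\abs{\hat x-\hat x_0}^2+\abs{\hat y-\hat x_0}^2)<u(\hat x)-u(\hat y)\le C_h\abs{\hat x-\hat y}^{\gamma}$.
This gives $\abs{\eta_1-\eta_2}\le 2\sqrt{2C_hL_2}\,\abs{\hat x-\hat y}^{\gamma/2}$, and hence $I_2\lesssim\sqrt{L_2}\,\abs{\hat x-\hat y}^{\gamma/2-1}+C(L_2)$. The good term $cL_1\abs{\hat x-\hat y}^{\beta-1}$ beats this once you fix $\beta\le\gamma/2$ and take $L_1\gg\sqrt{L_2}$.

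With this modification the rest of your plan goes through: fix $L_2$ to swallow the $\mathcal M L_1$ term, then take $L_1$ large for the constant and sublinear terms.
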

\begin{proof}
The proof follows closely the proof of Proposition \ref{prop3.1}-(i) with $\xi=0$. Thus we use the same notation as in the proof of Proposition \ref{prop3.1} and only concentrate 
on the differences. We first choose $L_{1}>\max \{4,L_{2}\}$ and $L_{2}\geq \left(2^{2+\beta}\omega_{0}(1+\beta)\right)^{2/\beta}$ so that
	$\abs{\xi_{\hat{x}}}< 2L_{1}$ and
	\begin{align*}
		\abs{\xi_{\hat{x}}}\geq& L_{1}w^{\prime}(\abs{\hat{x}-\hat{y}})-\frac{L_{2}}{2}=L_{1}\left(1-w_{0}(1+\beta)\abs{\hat{x}-\hat{y}}^{\beta}\right)-\frac{L_{2}}{2}\\
		\geq&L_{1}\left(1-\omega_{0}(1+\beta)\left(\frac{2}{\sqrt{L_{2}}}\right)^{\beta}\right)-\frac{L_{2}}{2}\geq \frac{3L_{1}}{4}-\frac{L_{2}}{2}\geq \frac{L_{1}}{4}>1.
	\end{align*}
	Also, we have $1<\abs{\xi_{\hat{y}}}\leq 2L_{1}$. Since $0<\beta<1$, there exists a constant $\theta\in(\beta,1)$.
	Applying the assumptions \eqref{12}-\eqref{15}, in combination with $-1< p_{\rm min}<0$, $0<m\leq 1+p_{\rm min}$, and $\Abs{\hat{x}-\hat{y}}^{\theta-1}>1$, we deduce that
	\begin{equation}\label{998}
		\begin{split}
			G_{\xi_{\hat{y}}}(Y)-G_{\xi_{\hat{x}}}(X) 
			&\geq-\frac{\|f\|_{L^{\infty}(B_{1})}+\mathcal{K}+\mathcal{M}\abs{\xi_{\hat{x}}}^{m}}{\abs{\xi_{\hat{x}}}^{p(\hat{x})}}-\frac{\|f\|_{L^{\infty}(B_{1})}+\mathcal{K}+\mathcal{M}\abs{\xi_{\hat{y}}}^{m}}{\abs{\xi_{\hat{y}}}^{p(\hat{y})}}\\
			 &\geq-2\frac{\|f\|_{L^{\infty}(B_{1})}+\mathcal{M}}{(2L_{1})^{p_{\rm min}}}-\mathcal{M}\left(\abs{\xi_{\hat{x}}}^{m-p_{\rm min}}+\abs{\xi_{\hat{y}}}^{m-p_{\rm min}}\right)\\
			&\geq -4\left(\|f\|_{L^{\infty}(B_{1})}+\mathcal{K}\right)L_{1}^{-p_{\rm min}}-2\mathcal{M}(2L_{1})^{m-p_{\rm min}}\\
			&\geq -4\left(\|f\|_{L^{\infty}(B_{1})}+\mathcal{K}\right)L_{1}^{-p_{\rm min}}-4\mathcal{M}L_{1}\Abs{\hat{x}-\hat{y}}^{\theta-1}.
		\end{split}
	\end{equation}
Since $\xi=0$, we have $\eta_1=\xi_{\hat{x}}$ and $\eta_2=\xi_{\hat{y}}$. It follows from $|\eta_1-\eta_2|\leq 2L_{2}$ and $\abs{\eta_{1}},\abs{\eta_2}\geq \frac{L_{1}}{4}$ that
\begin{equation*}
	\begin{split}
		|\tilde{\eta}_1 - \tilde{\eta}_2| = \left| \frac{\eta_1}{|\eta_1|} - \frac{\eta_2}{|\eta_2|} \right| 
		\leq 2\max \left\{  \frac{\abs{\eta_1 - \eta_2}}{|\eta_1|} ,  \frac{\abs{\eta_1 - \eta_2}}{|\eta_2|} \right\} 
		\leq \frac{16L_2}{L_{1}},
	\end{split}
\end{equation*}	
This together with $L_{1}>L_{2}$ yields that
\begin{equation*}
	\begin{split}
		I_{2}&=\operatorname{Tr}\left((A(\eta_1) - A(\eta_2))Y\right)\\
		&\leq d|p-2| |\tilde{\eta}_1 - \tilde{\eta}_2|\left(|\tilde{\eta}_1|+|\tilde{\eta}_2|\right)\|Y\| \\
		&\leq \frac{32L_2}{L_{1}} d|p-2| \left(L_{1}\abs{\hat{x}-\hat{y}}^{-1}+2L_{2}+1\right)\\
		&\leq 32 d|p-2| \left(2L_{2}+1\right)+32 d|p-2|L_{2}\abs{\hat{x}-\hat{y}}^{-1}.
	\end{split}
\end{equation*}	
Recall that 
\begin{equation}\label{999}
	\begin{split}
		I_{1}\leq \sum_{i=1}^{d} \lambda_i(A(\eta_1))\lambda_i(X - Y)\leq C_{1}-\min\left\{1,p-1\right\}4L_{1}\beta(\beta+1)w_{0}\abs{\hat{x}-\hat{y}}^{\beta-1}, 
	\end{split}
\end{equation}	
where $C_{1}:=\min\left\{1,p-1\right\}\left(4L_{2}+2\right)+\max\left\{1,p-1\right\}(d-1)(4L_{2}+2)$. Combining the information above, we obtain
\begin{equation*}
	\begin{split}
		G_{\xi_{\hat{y}}}(Y)-G_{\xi_{\hat{x}}}(X)\leq& C_{1}-\min\left\{1,p-1\right\}4L_{1}\beta(\beta+1)w_{0}\abs{\hat{x}-\hat{y}}^{\beta-1}\\
		&+32 d|p-2| \left(2L_{2}+1\right)+32 d|p-2|L_{2}\abs{\hat{x}-\hat{y}}^{-1}.
	\end{split}
\end{equation*}	
A combination of \eqref{998} with \eqref{999} yields that
	\begin{equation*}
		\begin{split}
		&4L_{1}\abs{\hat{x}-\hat{y}}^{\beta-1}\left(\min\left\{1,p-1\right\}\beta(\beta+1)w_{0}-\mathcal{M}\Abs{\hat{x}-\hat{y}}^{\theta-\beta}\right)\\	
		\leq& C_{1}+4\left(\|f\|_{L^{\infty}(B_{1})}+\mathcal{K}\right)L_{1}^{-p_{\rm min}}+32 d|p-2| \left(2L_{2}+1\right)+32 d|p-2|L_{2}\abs{\hat{x}-\hat{y}}^{-1}.
	\end{split}
	\end{equation*}
We now select $L_{2}\geq 4\left(\frac{2\mathcal{M}}{\min\left\{1,p-1\right\}\beta(\beta+1)w_{0}}\right)^{2/(\theta-\beta)}$, then it follows from $\abs{\hat{x}-\hat{y}}\leq \frac{2}{\sqrt{L_{2}}}$ and $\theta\in(\beta,1)$ that
	\begin{equation*}
		\mathcal{M}\Abs{\hat{x}-\hat{y}}^{\theta-\beta}\leq \mathcal{M} \left(\frac{2}{\sqrt{L_{2}}}\right)^{\theta-\beta}\leq \frac{1}{2}\min\left\{1,p-1\right\}\beta(\beta+1)w_{0}.
	\end{equation*}
	Then it follows that
		\begin{equation*}
		\begin{split}
			&2L_{1}\min\left\{1,p-1\right\}\beta(\beta+1)w_{0}\abs{\hat{x}-\hat{y}}^{\beta-1}\\	
			\leq& C_{1}+4\left(\|f\|_{L^{\infty}(B_{1})}+\mathcal{K}\right)L_{1}^{-p_{\rm min}}+32 d|p-2| \left(2L_{2}+1\right)+32 d|p-2|L_{2}\abs{\hat{x}-\hat{y}}^{-1}
		\end{split}
	\end{equation*}
	In view of $-1<p_{\rm min}<0$, taking $L_{1}$ large enough, we obtain a contradiction. This completes the proof.
\end{proof}
\subsection{Approximation lemma}
This section is solely dedicated to the proof of a key approximation lemma for the degenerate case via compactness arguments, which plays a paramount role in our forthcoming geometric argument.
\begin{lemma}\label{bijin} 
	Let $u\in C(B_{1})$ be a normalized viscosity solution of \eqref{531} under assumptions \eqref{12}-\eqref{15} with $p_{\rm min}\geq 0$. Then, for any $\varepsilon>0$, there exists $\sigma\in (0,1)$ 
	such that if
	\begin{equation*}
		\max\left\{\|f\|_{L^{\infty}(B_{1})},\mathcal{K},\mathcal{M}\left(\abs{\xi}^{(m-p_{\rm min})_{+}}+1\right) \right\}\leq \sigma,
	\end{equation*}
	then one can find a function $v\in C_{\rm loc}^{1,\beta_{0}}(B_{3/4})$ for some $\beta_{0}\in(0,1)$, satisfying
	\begin{equation*}
		\|u-v\|_{L^{\infty}(B_{1/2})}\leq \varepsilon.
	\end{equation*}
\end{lemma}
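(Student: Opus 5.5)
We argue by contradiction combined with compactness. Suppose the lemma fails. Then there exist $\varepsilon_0>0$ and sequences $u_k$, $f_k$, $\mathcal{H}_k$, $\Phi_k$, $\xi_k$ such that the following hold:
\begin{itemize}
\item $u_k$ is a normalized viscosity solution of \eqref{531} with data $(\Phi_k,\mathcal{H}_k,f_k,\xi_k)$, where $\Phi_k$, $\mathcal{H}_k$ satisfy \eqref{12}--\eqref{15} with uniform structural constants;
\item $\max\{\|f_k\|_{L^\infty},\mathcal{K}_k,\mathcal{M}_k(|\xi_k|^{(m-p_{\rm min})_+}+1)\}\le 1/k$;
\item $\|u_k-v\|_{L^\infty(B_{1/2})}>\varepsilon_0$ for every $v\in C^{1,\beta_0}_{\rm loc}(B_{3/4})$.
\end{itemize}

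The first step is equicontinuity. If $m\le p_{\rm min}$, Proposition \ref{prop3.1} applies; if $m>p_{\rm min}$, then for $k$ large the smallness $\mathcal{M}_k(|\xi_k|^{m-p_{\rm min}}+1)\le 1/k\le\kappa_0$ allows Proposition \ref{prop3.2}. In either case the $u_k$ are uniformly Hölder in $B_{3/4}$ (Lipschitz if $|\xi_k|$ is large). Since the constants there depend only on upper bounds for $\|f\|,\mathcal{K},\mathcal{M}$, which are $\le1$ here, Arzelà–Ascoli gives, up to a subsequence, $u_k\to u_\infty$ uniformly in $B_{3/4}$ with $\|u_\infty\|\le1$. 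The goal is then to identify the limit equation and show that $u_\infty$ is $C^{1,\beta_0}$, which contradicts the third bullet with $v=u_\infty$.

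Next I split by the behaviour of $\xi_k$.

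\emph{Case (a): $\xi_k$ bounded.} Pass to $\xi_k\to\xi_\infty$. I claim $u_\infty$ is a viscosity solution of $-\Delta^N_{p,\xi_\infty}u_\infty=0$, i.e. $v:=u_\infty+\xi_\infty\cdot x$ solves $-\Delta_p^N v=0$. To verify it, take a test function $\varphi$ touching $u_\infty$ from above at $x_0$, with $D\varphi(x_0)+\xi_\infty\neq0$; the degenerate case $D\varphi+\xi_\infty=0$ is handled by the standard argument of \cite{Attouchi2017JMPA} via the envelope of the operator. Perturbed test functions touch $u_k$ at $x_k\to x_0$, with $|D\varphi(x_k)+\xi_k|\ge c>0$, so we may divide by $\Phi_k\ge K_1|D\varphi+\xi_k|^{p(x)}\ge c'>0$. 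This gives
\[-\Delta^N_{p,\xi_k}\varphi(x_k)\le \frac{|f_k|+\mathcal{K}_k+\mathcal{M}_k|D\varphi+\xi_k|^m}{\Phi_k}\to0.\]
Lemma \ref{111raodongzhengzexing} (or Lemma \ref{dengjia} with Remark \ref{ptiaohezhengzexing}) then gives $u_\infty\in C^{1,\beta_1}(B_{1/2})$.

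\emph{Case (b): $|\xi_k|\to\infty$.} Here $|D\varphi+\xi_k|\to\infty$ uniformly near $x_0$, so $\Phi_k\ge K_1|D\varphi+\xi_k|^{p_{\rm min}}\ge c$ when $p_{\rm min}\ge0$. The Hamiltonian ratio is at most $\mathcal{M}_k|D\varphi+\xi_k|^{m-p_{\rm min}}+o(1)$, and this is $\lesssim \mathcal{M}_k(|\xi_k|^{(m-p_{\rm min})_+}+1)\to0$; this is precisely why the smallness hypothesis carries the factor $|\xi|^{(m-p_{\rm min})_+}$. Moreover $(D\varphi+\xi_k)/|D\varphi+\xi_k|\to e$ for some unit vector $e$ (along a subsequence). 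So $u_\infty$ solves the linear constant-coefficient uniformly elliptic equation $-\operatorname{Tr}((I+(p-2)e\otimes e)D^2u_\infty)=0$, and is therefore smooth with universal $C^{1,\beta_0}$ (indeed $C^2$) bounds.

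Taking $\beta_0=\beta_1$ in both cases yields the contradiction. I expect the main obstacle to be the stability step: matching test functions whose gradient makes $D\varphi+\xi$ vanish (only relevant in case (a)), and keeping the Hamiltonian quotient uniformly small when $m>p_{\rm min}$ and $|\xi_k|$ is large. The latter is handled by the hypothesis as above. For the former I would rely on the standard argument of \cite{Attouchi2017JMPA}, which shows that it suffices to test with $D\varphi+\xi_\infty\ne0$, or else with quadratic polynomials.
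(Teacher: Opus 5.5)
Your architecture is the same as the paper's, and the unbounded case (b) is fine, including the role of $\abs{\xi}^{(m-p_{\rm min})_+}$. The gap is at the step you flag as the main obstacle: in case (a), test functions with $D\varphi(x_0)+\xi_\infty=0$.

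An envelope or relaxed-limit stability argument, as in Attouchi--Parviainen--Ruosteenoja for the nondegenerate normalized $p$-Poisson equation, does not transfer here. That argument needs the approximating $u_k$ themselves to satisfy the $\lambda_{\min}/\lambda_{\max}$ inequalities at points where $D\varphi(x_k)+\xi_k=0$. For \eqref{531} in the degenerate regime, the factor $\Phi_k(D\varphi+\xi_k,x)$ removes all second-order information at such points. Definition \ref{dingyi2} only tests where the relevant gradient is nonzero; the classical form reduces to $\mathcal{H}_k(0,x_k)\le f_k(x_k)$. Moreover, when $D\varphi(x_k)+\xi_k\to0$ but stays nonzero, the quotient $(f_k-\mathcal{H}_k)/\Phi_k$ need not tend to $0$. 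This is because $\Phi_k\sim\abs{D\varphi(x_k)+\xi_k}^{p_k(x_k)}$ can be much smaller than $\|f_k\|_{L^\infty}+\mathcal{K}_k$. So this route does not yield \eqref{551}--\eqref{552} for $u_\infty$. That is why the paper perturbs the test function by $\gamma\abs{P_T(x)}$ (or $\gamma\langle e,P_T(x)\rangle$). The perturbation keeps $\abs{D\varphi_\gamma(x_j^\gamma)+\xi_j}$ bounded away from $0$ uniformly in $j$, while convexity of $\abs{P_T(x)}$ preserves the strict sign of the operator; only then does the paper let $j\to\infty$.

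Your fallback claim, that it suffices to test with $D\varphi(x_0)+\xi_\infty\neq0$, is true, but for a reason you do not give: the limit equation is homogeneous. Your computation shows that $w:=u_\infty+\xi_\infty\cdot x$ satisfies $-\Delta_p\psi(x_0)\le0$ (resp. $\ge0$) whenever $\psi\in C^2$ touches $w$ from above (resp. below) at $x_0$ with $D\psi(x_0)\neq0$. This is exactly the Juutinen--Lindqvist--Manfredi definition of viscosity $p$-harmonicity, and their equivalence theorem makes $w$ a weak $p$-harmonic function. Lemma \ref{dengjia} then shows that $u_\infty$ solves the deviating equation in the full viscosity sense. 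Lemma \ref{111raodongzhengzexing}, applied to $u_\infty/2$, then gives $C^{1,\beta_1}$ bounds independent of $\xi_\infty$.

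With that citation in place of the envelope argument, case (a) is complete, and it is shorter than the paper's $P_T$-perturbation case analysis. As written, though, the crucial step rests on a mechanism that does not apply to this degenerate equation.
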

\begin{proof}
	We use a contradiction argument. If the claim fails, then there exist $\varepsilon_{0}>0$ and sequences
	of functions $\{\Phi_{j}\}_{j\in \mathbb{N}}$, $\{\mathcal{H}_{j}\}_{j\in \mathbb{N}}$, $\{f_{j}\}_{j\in \mathbb{N}}$, $\{u_{j}\}_{j\in \mathbb{N}}$ and a sequence of vectors $\{\xi_{j}\}_{j\in \mathbb{N}}$ satisfying
	\begin{itemize}
		\item   [{\rm$({{\rm i}})$}] $u_{j}\in C({B_{1}})$ is a viscosity solution of 
		\begin{equation}\label{72}
			-\Phi_{j}({Du_{j}+\xi_{j}}, x)\Delta_{p,\xi_{j}}^{N} u_{j}+\mathcal{H}_{j}(Du_{j}+\xi_{j},x) =f_{j}(x) \quad  \text{in} \quad B_{1}
		\end{equation}
	with $ \|u_{j}\|_{L^{\infty}(B_{1})}\leq 1$, where $f_{j}\in C({B_{1}})$;
		\item [{\rm$({{\rm ii}})$}] $\Phi_{j}$ satisfies \eqref{12} and \eqref{13} with $p_{j}(\cdot),q_{j}(\cdot)\in C(B_{1})$, $0\leq p_{\rm min}\leq p_{j}(x)\leq q_{j}(x)\leq q_{\rm max}<\infty$ and $0\leq a_{j}(\cdot)\in C(B_{1})$;
		\item [ {\rm$({{\rm iii}})$}] $\mathcal{H}_{j}:\mathbb{R}^{d} \times B_{1}\rightarrow \mathbb{R}$ is continuous and there exist constants $\mathcal{K}_{j},\mathcal{M}_{j}>0$ such
		that
		\begin{equation}\label{73}
			|H_{j}(t,x)|\leq \mathcal{K}_{j}+\mathcal{M}_{j}|t|^{m} \quad  {\rm for\; every\;}(t,x) \in\mathbb{R}^{d}\times B_{1};
		\end{equation}
		\item [ {\rm$({{\rm iv}})$}] and
		\begin{equation}\label{71}
			\max\left\{\|f_{j}\|_{L^{\infty}(B_{1})}, \mathcal{K}_{j},\mathcal{M}_{j}\left(\abs{\xi_{j}}^{(m-p_{\rm min})_{+}}+1\right) \right\}\leq \frac{1}{j}.
		\end{equation}	
	\end{itemize}
	However, for any $v\in C_{\rm loc}^{1,\beta_{0}}(B_{3/4})$, it holds
	 \begin{equation}\label{340}
			\|u_{j}-v\|_{L^{\infty}(B_{1/2})}>\varepsilon \quad {\rm for\;any}\;j\in\mathbb{N}.
	\end{equation}
	
 It follows from Propositions \ref{prop3.1} and \ref{prop3.2} that the sequence $\{u_{j}\}_{j\in\mathbb{N}}\subset C_{\rm loc}^{0,\tau}(B_{1})$ for some $\tau\in (0,1)$. By applying Arzel${\rm \grave{a}}$-Ascoli theorem, we conclude that, up to a subsequence,  $u_{j}$ converges locally uniformly in $B_{1}$ to some continuous function $u_{\infty}$ in the $C^{0}$-topology. 

If the sequence $\{\xi_{j}\}_{j\in \mathbb{N}}$ is unbounded, take a subsequence, still denoted by $\left\{\xi_{j}\right\}_{j\in\mathbb{N}}$, for which $\abs{\xi_{j}}\rightarrow {\infty}$ as $j\rightarrow\infty$ and then a  converging subsequence from $e_{j}:=\frac{\xi_{j}}{|\xi_{j}|}$, $e_{j}\rightarrow e_{\infty}$. 
Next our goal is to prove $u_{\infty}$ is a viscosity solution to
\begin{equation}\label{jixianfangcheng1}
	-\Delta u_{\infty}+(p-2)\left\langle D^{2}u_{\infty}e_{\infty},e_{\infty}\right\rangle =0 \quad {\rm in}\;\; B_{3/4}.
\end{equation}
We only prove that $u_{\infty}$ is a viscosity supersolution, as its subsolution counterpart is entirely analogous. Let $\varphi$ be a test function such that $u_{\infty}-\varphi$ has a local minimum at $\overline{x}\in B_{3/4}$. Without loss of generality, we may assume that $\abs{\overline{x}}=u_{\infty}(\overline{x}) =\varphi(\overline{x})= 0$ and that $\varphi$ is a quadratic polynomial, namely,
$$\varphi(x)=\frac{1}{2}\left\langle Mx,x\right\rangle+\left\langle b,x\right\rangle,$$
where $A\in S^{d}$ and $b\in \rn$. Since $u_{j}\rightarrow u_{\infty}$ locally uniformly in $B_{1}$, we see that, for $j$ sufficiently large, the quadratic polynomial
$$\varphi_{j}(x):=\frac{1}{2}\left\langle M(x-x_{j}),x-x_{j}\right\rangle+\left\langle b,x-x_{j}\right\rangle+u_{j}(x_{j})$$
touches $u_{j}$ from below at $x_{j}$ belonging to a small neighbourhood of the origin. Since $u_{j}$ is a viscosity solution of \eqref{72}, we have
\begin{equation}\label{342}
	-\Phi_{j}(\abs{b+\xi_{j}}, x_{j})\left[\operatorname{Tr}(M)+(p-2)\left\langle M \frac{b+\xi_{j}}{|b+\xi_{j}|}, \frac{b+\xi_{j}}{|b+\xi_{j}|} \right\rangle\right]+
	\mathcal{H}_{j}(b+\xi_{j},x_{j}) \geq f_{j}(x_{j}).
\end{equation}
Since $\abs{\xi_{j}}\rightarrow {\infty}$ as $j\rightarrow\infty$, there exists $j^{\star}\in\mathbb{N}$ so that $\abs{\xi_{j}}\geq2\max\{1,\abs{b}\}$ for all $j\geq j^{\star}$. Then it follows 
\begin{equation}\label{Step1xiajie}
	\abs{b+\xi_{j}}\geq \abs{\xi_{j}}-|b|\geq \frac{1}{2}\abs{\xi_{j}}\geq 1.
\end{equation}
Using condition (ii), in combination with \eqref{Step1xiajie}, \eqref{71} and $p_{\rm min}\geq 0$, we deduce that
\begin{equation}\label{344}
	\Abs{\frac{f_{j}(x_{j})}{\Phi_{j}({b+\xi_{j}}, x_{j})}}\leq  \frac{\|f_{j}\|_{L^{\infty}(B_{1})}}{\abs{b+\xi_{j}}^{p_{\rm min}}}\leq \frac{2^{p_{\rm min}}}{j\abs{\xi_{j}}^{p_{\rm min}}}.
\end{equation}
Applying the following basic inequality
$$(\upsilon_{1}+\upsilon_{2})^{m}\leq 2^{m}(\upsilon_{1}^{m}+\upsilon_{2}^{m}) \quad {\rm for\;all} \;\,\upsilon_{1},\upsilon_{2}\geq 0,\;m>0,$$
in combination with \eqref{Step1xiajie}, \eqref{71}, \eqref{73} and $|\xi_{j}|\geq 2|b|$, we arrive at
\begin{equation*}
	\begin{split}
	\Abs{\frac{\mathcal{H}_{j}\left(b+\xi_{j},x_{j}\right)}{\Phi_{j}({b+\xi_{j}}, x_{j})}}&\leq  \frac{\mathcal{K}_{j}+\mathcal{M}_{j}	\abs{b+\xi_{j}}^{m}}{\abs{b+\xi_{j}}^{p_{\rm min}}}\\
	&\leq \frac{2^{p_{\rm min}}}{j\abs{\xi_{j}}^{p_{\rm min}}}+
	\frac{2^{p_{\rm min}}\mathcal{M}_{j}\abs{b+\xi_{j}}^{m}}{\abs{\xi_{j}}^{p_{\rm min}}}\\
	&\leq \frac{2^{p_{\rm min}}}{j\abs{\xi_{j}}^{p_{\rm min}}}+\frac{2^{m+p_{\rm min}}\abs{\xi_{j}}^{m-p_{\rm min}}}{j\left(\abs{\xi_{j}}^{(m-p_{\rm min})_{+}}+1\right)}\left(\Abs{\frac{b}{\xi_{j}}}^{m}+1\right)\\
	&\leq \frac{2^{p_{\rm min}}}{j\abs{\xi_{j}}^{p_{\rm min}}}+\frac{2^{m+1+p_{\rm min}}\abs{\xi_{j}}^{m-p_{\rm min}}}{j\left(\abs{\xi_{j}}^{(m-p_{\rm min})_{+}}+1\right)}.
	\end{split}
\end{equation*}
It follows from $|\xi_{j}|\geq 2$ that
\begin{equation}\label{345}
		\Abs{\frac{\mathcal{H}_{j}\left(b+\xi_{j},x_{j}\right)}{\Phi_{j}({b+\xi_{j}}, x_{j})}}\leq
	\begin{cases}
			\frac{2^{p_{\rm min}}}{j\abs{\xi_{j}}^{p_{\rm min}}}+\frac{2^{m+p_{\rm min}}}{j \abs{\xi_{j}}^{p_{\rm min}-m}}& \text{if}\;\; 0< m\leq p_{\rm min}, \\
		\frac{2^{p_{\rm min}}}{j\abs{\xi_{j}}^{p_{\rm min}}}+\frac{2^{m+p_{\rm min}+1}}{j} &\text{if}\;\; p_{\rm min}<m\leq 1+ p_{\rm min}.
	\end{cases}
\end{equation}
A combination of \eqref{342} with \eqref{344} and \eqref{345} yields that
\begin{equation}\label{sj}
	-\left[\operatorname{Tr}(M)+(p-2)\left\langle M \frac{b|\xi_{j}|^{-1}+e_{j}}{|b|\xi_{j}|^{-1}+e_{j}|}, \frac{b|\xi_{j}|^{-1}+e_{j}}{|b|\xi_{j}|^{-1}+e_{j}|} \right\rangle\right]\geq -\frac{2^{1+p_{\rm min}}}{j\abs{\xi_{j}}^{p_{\rm min}}}-\frac{2^{m+p_{\rm min}+1}}{j \abs{\xi_{j}}^{(p_{\rm min}-m)_{+}}}.
\end{equation}
By passing the limit $j\rightarrow \infty$, we obtain that 
\begin{equation*}
	-\operatorname{Tr}(M)-(p-2)\left\langle M e_{\infty},e_{\infty}\right\rangle\geq 0.
\end{equation*}
Thus, $u_{\infty}$ is a viscosity supersolution of \eqref{jixianfangcheng1}. From Remark \ref{xianxingyizhituoyuan}, we know that \eqref{jixianfangcheng1} is a linear uniformly elliptic equation with constant
coefficients. Then it follows from the classical regularity result in \cite[Corollary 5.7]{Caff1} that $u_{\infty}\in C^{1,\beta_{2}}_{\rm loc}(B_{3/4})$ for some $0<\beta_{2} <1$.

On the other hand, if the sequence $\{\xi_{j}\}_{j\in\mathbb{N}}$ is bounded, then we may assume  $\xi_{j}\rightarrow \xi_{\infty}$ as $j\rightarrow \infty$ (up to a subsequence). Then it follows that $\xi_{j}+b\rightarrow \xi_{\infty}+b \; {\rm as}\; j\rightarrow \infty.$ We aim to prove that $u_{\infty}$ is a viscosity solution to
\begin{equation}\label{boundqingxing}
	-\Delta u_{\infty} - (p-2)\left\langle D^2u_{\infty}\frac{Du_{\infty}+ \xi_{\infty}}{|Du_{\infty} + \xi_{\infty}|}, \frac{Du_{\infty} + \xi_{\infty}}{|Du_{\infty} + \xi_{\infty}|}\right\rangle = 0 \quad \text{in } B_{3/4}.
\end{equation}
Let $\varphi$ be a test function such that $u_{\infty}-\varphi$ has a local minimum at $\overline{x}\in B_{3/4}$. Without loss of generality, we may assume that $\abs{\overline{x}}=u_{\infty}(\overline{x}) =\varphi(\overline{x})= 0$ and that $\varphi$ is a quadratic polynomial, namely,
$$\varphi(x)=\frac{1}{2}\left\langle Mx,x\right\rangle+\left\langle b,x\right\rangle.$$
At this point, we consider two cases: $\abs{b+\xi_{\infty}}\neq 0$ or $\abs{b+\xi_{\infty}}= 0$. 

In the case of $\abs{b+\xi_{\infty}}\neq 0$. Note that $\Abs{\xi_{j}+b}\geq \frac{1}{2}\Abs{\xi_{\infty}+b}$ for $j$ large enough. Applying conditions (ii)-(iv) and $0\leq p_{\rm min}\leq q_{\rm max}$ to obtain 
\begin{equation*}
	\begin{split}
		\Abs{\frac{f_{j}(x_{j})-\mathcal{H}_{j}\left(b+\xi_{j},x_{j}\right)}{\Phi_{j}({b+\xi_{j}}, x_{j})}}\leq \frac{2^{1+q_{\rm max}}}{jC_{2}}+ \frac{2^{q_{\rm max}}}{jC_{2}} \frac{\abs{b+\xi_{j}}^{m}}{\abs{\xi_{j}}^{(m-p_{\rm min})_{+}}+1}\leq \frac{2^{q_{\rm max}}}{jC_{2}}\left(2+\abs{b+\xi_{j}}^{m}\right),
	\end{split}
\end{equation*}
where $C_{2}:=\min\left\{|b+\xi_{\infty}|^{p_{\rm min}},|b+\xi_{\infty}|^{q_{\rm max}}\right\}$.
This along with \eqref{342} leads to
\begin{equation}\label{347}
	-\operatorname{Tr}(M)-(p-2)\left\langle M\frac{b+ \xi_{\infty}}{|b + \xi_{\infty}|}, \frac{b + \xi_{\infty}}{|b + \xi_{\infty}|}\right\rangle \geq 0.
\end{equation}
Hence $u_{\infty}$ is a viscosity supersolution to \eqref{boundqingxing}. 

For the latter case where $\Abs{b+\xi_{\infty}}= 0$, there are two situations can occur: $b=-\xi_{\infty}$ with $|b|,\abs{\xi_{\infty}}>0$ or $|b|=\abs{\xi_{\infty}}=0$. \\
{\bf Case 1.} $b=-\xi_{\infty}$ with $|b|,\abs{\xi_{\infty}}>0$.  If there exists a subsequence still indexed by $j$ such that $|D\varphi(x_j) + \xi_j| > 0$ for all $j$ in the subsequence, then we can repeat the process above to obtain \eqref{347}. If such a subsequence does not exist, then we aim to show that
\begin{equation}\label{551}
	-\Delta\varphi(\overline{x}) - (p-2)\lambda_{\min}(D^2\varphi(\overline{x})) \geq 0, \quad p \geq 2,
\end{equation}
and
\begin{equation}\label{552}
	-\Delta\varphi(\overline{x}) - (p-2)\lambda_{\max}(D^2\varphi(\overline{x})) \geq 0, \quad 1 < p < 2.
\end{equation}
In the following, we only give the proof of \eqref{551}, since the proof of \eqref{552} is analogous. We assume by contradiction that
\begin{equation}\label{553}
-\Delta\varphi(\overline{x}) - (p-2)\lambda_{\min}(D^2\varphi(\overline{x})) < 0, \quad p \geq 2,
\end{equation}
which implies that matrix $M$ has at least one positive eigenvalue. Otherwise, we have $\operatorname{Tr}(M) +(p-2)\lambda_{\min}(M)\leq 0$, which is a contradiction to \eqref{553}. Let $\rn=T\oplus Q$ be the orthogonal sum, where $T:={\rm span}\{e_{1},e_{2},...,e_{k}\}$ is the invariant space composed of those eigenvectors corresponding to
positive eigenvalues of $M$, and $Q:=\{y\in \rn:\left\langle y,\eta  \right\rangle=0\;{\rm for\;all}\;\eta\in T\}$.
 Let $\gamma>0$ and
\begin{equation}\label{model3}
	\varphi_{\gamma}(x):=\varphi(x)+\gamma \Abs{P_{T}(x)}=\frac{1}{2}\left\langle Mx,x\right\rangle+\left\langle b,x\right\rangle+\gamma \Abs{P_{T}(x)},
\end{equation}
where $P_{T}$ stands for the orthogonal projection over $T$. Since $u_{j}\rightarrow u_{\infty}$ locally uniformly in $B_{1}$ and $\varphi$ touches $u_{\infty}$ from below at the origin, then, for $\gamma$ small enough, $\varphi_{\gamma}$ touches
$u_{j}$ from below at a point $x_{j}^{\gamma}$ belonging to a small neighbourhood of 0. Moreover, there holds that, up to a subsequence, $x_{j}^{\gamma}\rightarrow x_{*}$ for some $x_{*}\in B_{3/4}$ as $j\rightarrow\infty$. Now we examine two scenarios: $P_{T}\left(x_{j}^{\gamma}\right)=0$ and $P_{T}\left(x_{j}^{\gamma}\right)\neq 0$.

First, we consider $P_{T}\left(x_{j}^{\gamma}\right)=0$.
Notice that
\begin{equation*}
	\tilde{\varphi}_{\gamma}(x):=\varphi(x)+\gamma \left\langle e,P_{T}(x)\right\rangle
\end{equation*}
touches $u_{j}$ from below at $x_{j}^{\gamma}$ for every $e\in \mathbb{S}^{d-1}$ (i.e., $|e|=1$). Through a direct calculation, we have
\begin{equation*}
	D\tilde{\varphi}_{\gamma}(x_{j}^{\gamma})=Mx_{j}^{\gamma}+b+\gamma P_{T}(e),\quad D^{2}\tilde{\varphi}_{\gamma}(x_{j}^{\gamma})=M.
\end{equation*}
For simplicity, we denote $D_j := Mx_j^\gamma + {b} + \gamma P_{T}(e)  + \xi_j.$ 
It follows from the definition of $-\Delta_{p,\xi_j}^N$ and \eqref{553} that
\begin{equation}\label{558}
	-\Delta_{p,\xi_j}^N \tilde{\varphi}_{\gamma}(x_{j}^{\gamma}) = -\operatorname{Tr}(M) - (p-2)\left\langle M \frac{D_{j}}{|D_{j}|}, \frac{D_{j}}{|D_{j}|}\right\rangle
	\leq -\operatorname{Tr}(M) - (p-2)\lambda_{\min}(M)<0,
\end{equation}
where we use the fact $\left\langle M \frac{D_{j}}{|D_{j}|}, \frac{D_{j}}{|D_{j}|}\right\rangle\geq \lambda_{\min}(M)$. We choose $e\in T\cap \mathbb{S}^{d-1}$ such that $P_{T}(e)=e$.
Since $u_{j}$  is a viscosity solution of \eqref{72}, we get
\begin{equation}\label{353}
	-\Phi_{j}({D_{j}}, x_{j}^{\gamma})\Delta _{p,\xi_{j}}^{N} \tilde{\varphi}_{\gamma}(x_{j}^{\gamma})+\mathcal{H}_{j}\left(D_{j},x_{j}^{\gamma}\right) \geq f_{j}(x_{j}^{\gamma}).
\end{equation}
If $Mx_{*}=0$, then for $j$ sufficiently large, we have
\begin{equation*}
	\Abs{Mx_{j}^{\gamma}+b+\xi_{j}}\leq \frac{\gamma}{2}.
\end{equation*}
This along with the triangle inequality yields
\begin{equation}\label{xiaoyu1}
	\frac{\gamma}{2}\leq \Abs{Mx_{j}^{\gamma}+b+\gamma e+\xi_{j}}\leq  \frac{3\gamma}{2}.
\end{equation}
A combination of (ii)-(iv) and \eqref{xiaoyu1} yields that
\begin{equation*}
\Abs{\frac{f_{j}(x_{j}^{\gamma})-\mathcal{H}_{j}\left(D_{j},x_{j}^{\gamma}\right)}{\Phi_{j}(D_{j}, x_{j}^{\gamma})}}\leq \frac{2^{1+q_{\rm max}}}{j\gamma^{q_{\rm max}}}+\frac{2^{q_{\rm max}-m}(3\gamma)^{m}}{j\gamma^{q_{\rm max}}}.
\end{equation*}
This together with \eqref{353} leads to
\begin{equation*}
	\begin{split}
		-\Delta_{p,\xi_j}^N \tilde{\varphi}_{\gamma}(x_{j}^{\gamma})\geq \frac{f_{j}(x_{j}^{\gamma})-\mathcal{H}_{j}\left(D_{j},x_{j}^{\gamma}\right)}{\Phi_{j}(D_{j}, x_{j}^{\gamma})}\geq -\frac{2^{1+q_{\rm max}}}{j\gamma^{q_{\rm max}}}-\frac{2^{q_{\rm max}-m}(3\gamma)^{m}}{j\gamma^{q_{\rm max}}}.
	\end{split}
\end{equation*}
By passing to the limit $j\rightarrow \infty$, we reach a contradiction with \eqref{558}. Thus, we prove \eqref{551}. \\
On the other hand, if $\Abs{Mx_{*}}>0$, we start off by considering the case in which $T\equiv \rn$ and select $e\in \mathbb{S}^{d-1}$ such that
\begin{equation*}
	\Abs{Mx_{*}+\gamma P_{T}(e)}=\Abs{Mx_{*}+\gamma e}>0.
\end{equation*}
For $j$ large enough, we have
\begin{equation}\label{case1}
	\frac{1}{2}\Abs{Mx_{*}+\gamma e}\leq \Abs{Mx_{j}^{\gamma}+\gamma e}\leq \frac{3}{2}\Abs{Mx_{*}+\gamma e}\quad {\rm and}\quad \abs{\xi_{j}+b}\leq \frac{1}{8}\Abs{Mx_{*}+\gamma e}.
\end{equation}
Furthermore, if $T\neq \rn$, then we choose $e\in \mathbb{S}^{d-1}\cap T^{\perp}$ such that
\begin{equation*}
	\Abs{Mx_{*}+\gamma P_{T}(e)}=\Abs{Mx_{*}}>0.
\end{equation*}
Again for $j$ large enough, we have
\begin{equation}\label{case2}
	\frac{1}{2}\Abs{Mx_{*}}\leq \Abs{Mx_{j}^{\gamma}}\leq \frac{3}{2}\Abs{Mx_{*}}\quad {\rm and}\quad \abs{\xi_{j}+b}\leq \frac{1}{8}\Abs{Mx_{*}}.
\end{equation}
Thus, using either \eqref{case1} or \eqref{case2}, we arrive at
\begin{equation*}
	0<\frac{3}{8}\Abs{Mx_{*}+\gamma P_{T}(e)}\leq\Abs{Mx_{j}^{\gamma}+b+\gamma P_{T}(e)+\xi_{j}}\leq \frac{13}{8}\Abs{Mx_{*}+\gamma P_{T}(e)}.
\end{equation*}
By the same arguments as before, we have
\begin{equation*}
	\begin{split}
		-\Delta_{p,\xi_j}^N \tilde{\varphi}_{\gamma}(x_{j}^{\gamma})\geq -\frac{1}{jC_{3}}\left(2+\left(\frac{13}{8}|Mx_{*}+\gamma P_{T}(e)|\right)^{m}\right),
	\end{split}
\end{equation*}
where $C_{3}:=\min\left\{\left(\frac{3}{8}|Mx_{*}+\gamma P_{T}(e)|\right)^{p_{\rm min}},\left(\frac{3}{8}|Mx_{*}+\gamma P_{T}(e)|\right)^{q_{\rm max}}\right\}$.
By passing to the limit $j\rightarrow \infty$, we also obtain a contradiction with \eqref{558} and hence prove \eqref{551}.

Next, let us consider $P_{T}\left(x_{j}^{\gamma}\right)\neq 0$.
Note that $\Abs{P_{T}(x)}$ is smooth and convex in a small neighbourhood of $x_{j}^{\gamma}$. Because of $P_{T}$ being a projection, then
\begin{equation}\label{feifuding}
	\Abs{P_{T}(x_{j}^{\gamma})}D\left(\Abs{P_{T}(x_{j}^{\gamma})}\right)=P_{T}(x_{j}^{\gamma})\quad {\rm and} \quad D^{2}\left(\Abs{P_{T}(x_{j}^{\gamma})}\right)\;\,{\rm is \;nonnegative\; definite}.
\end{equation}
Then it follows that
$$D{\varphi}_{\gamma}(x_{j}^{\gamma})= Mx_j^\gamma+ {b} + \gamma D(|P_T(x_j^\gamma)|),\quad D^{2}{\varphi}_{\gamma}(x_{j}^{\gamma})=M+\gamma D^{2}\left(\Abs{P_{T}(x_{j}^{\gamma})}\right).$$
 For simplicity, we denote
$E_j := Mx_j^\gamma+ {b} + \gamma D(|P_T(x_j^\gamma)|) + \xi_j$ and $Q_{j}:=D^{2}\left(\Abs{P_{T}(x_{j}^{\gamma})}\right)$. Then applying the definition of $-\Delta_{p,\xi_j}^N$, in combination with \eqref{553} and \eqref{feifuding}, we deduce that
\begin{equation}\label{899}
	\begin{split}
		&-\Delta_{p,\xi_j}^N {\varphi}_{\gamma}(x_{j}^{\gamma})\\
		=& -\operatorname{Tr}(M)- (p-2)\left\langle M \frac{E_j}{|E_j|}, \frac{E_j}{|E_j|}\right\rangle - \gamma\left\{\operatorname{Tr}(Q_j) + (p-2)\left\langle Q_j \frac{E_j}{|E_j|}, \frac{E_j}{|E_j|}\right\rangle\right\}\\
		\leq& -\operatorname{Tr}(M) - (p-2)\lambda_{\min}(M)-\gamma\bigg(\operatorname{Tr}(Q_j) + (p-2)\lambda_{\min}(Q_{j})\bigg)<0.
	\end{split}
\end{equation}
Moreover, we have the following viscosity inequality
\begin{equation*}
	-\Phi_{j}({E_{j}}, x_{j}^{\gamma})\Delta_{p,\xi_j}^N {\varphi}_{\gamma}(x_{j}^{\gamma})
	+\mathcal{H}_{j}\left(E_{j},x_{j}^{\gamma}\right) \geq f_{j}(x_{j}^{\gamma}).
\end{equation*}
As in the case when $P_{T}\left(x_{j}^{\gamma}\right)= 0$, we can consider the scenarios that $Mx_{*}=0$ and $Mx_{*}\neq 0$ and derive a contradiction with \eqref{899}. Therefore, we show \eqref{551} holds.\\
{\bf Case 2.} $b=\xi_{\infty}=0$. In this case, the procedures become easier and is very analogous to Case 1 and so we skip the details here.

At this stage, we have shown
that $u_{\infty}$ is a viscosity solution of \eqref{boundqingxing}. It follows from the well-known regularity results in Lemma \ref{111tiaohezhengzexing} that $u_{\infty}\in C_{\rm loc}^{1,\beta_{1}}(B_{3/4})$ for some $\beta_{1}\in(0,1)$.

Finally, setting $\beta_{0}:=\min\left\{\beta_{1},\beta_{2}\right\}$, then  $u_{\infty}\in C_{\rm loc}^{1,\beta_{0}}(B_{3/4})$. Consequently, we choose $v=u_{\infty}$ and reach a contradiction with \eqref{340} for $j$ sufficiently large. This completes the proof.
\end{proof}
\subsection{Proof of Theorem \ref{thm1}}\label{section5}
This section is dedicated to prove Theorem \ref{thm1} concerning the local gradient regularity. 
Building upon the approximation lemma, we are in a position to show that $u\in C^{1,\alpha^{\prime}}$ at the origin in the degenerate case.
\begin{proposition}\label{prop3.4}
	Let $u\in C(B_{1})$ be a normalized viscosity solution to \eqref{55mainmodel} under assumptions \eqref{12}-\eqref{15} with $p_{\rm min}\geq 0$. Let us fix 
	\begin{equation}\label{zhibiao}
		\alpha^{\prime}\in (0,\beta_{0})\cap \left(0,\frac{1}{1+q_{\rm max}}\right],
	\end{equation}
	where $\beta_{0}$ is given by Lemma \ref{bijin}. There exist universal constants $\delta>0$ and $\rho\in(0,\frac{1}{2})$	such that if
	\begin{equation}\label{811}
		\max\left\{\|f\|_{L^{\infty}(B_{1})},\mathcal{K},\mathcal{M}\right\} \leq \delta,
	\end{equation} 
then $u$ is $C^{1,\alpha^{\prime}}$ at the origin with the estimate
\begin{equation}\label{zhengzexing}
	\sup\limits_{x\in B_{r}}\Abs{u(x)-\left(u(0)+Du(0)\cdot x\right)}\leq Cr^{1+\alpha^{\prime}}\quad {\rm for \;all\;} r\in (0,\rho],
\end{equation}
where $C$ is a universal positive constant.
\end{proposition}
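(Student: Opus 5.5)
We follow the standard geometric iteration scheme (Caffarelli-type), as in Attouchi--Ruosteenoja and Araújo--Ricarte--Teixeira. The goal is to build a sequence of affine functions $\ell_k(x)=a_k+b_k\cdot x$ satisfying
\begin{equation*}
\sup_{B_{\rho^k}}|u-\ell_k|\leq \rho^{k(1+\alpha^{\prime})},\qquad |a_{k+1}-a_k|\leq C\rho^{k(1+\alpha^{\prime})},\qquad |b_{k+1}-b_k|\leq C\rho^{k\alpha^{\prime}}.
\end{equation*}
These bounds imply that $(a_k,b_k)$ converge to $(u(0),Du(0))$, and \eqref{zhengzexing} then follows by the usual interpolation between consecutive radii.

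\textbf{First step.} We apply Lemma \ref{bijin} with $\xi=0$. Since $u$ is normalized, for a small $\varepsilon$ to be chosen, $\delta\leq\sigma(\varepsilon)/2$ guarantees that the hypothesis of Lemma \ref{bijin} holds; note $\mathcal{M}(0+1)\leq\delta$. The lemma gives $v\in C^{1,\beta_0}(B_{3/4})$ with $\|u-v\|_{L^\infty(B_{1/2})}\leq\varepsilon$, and $\|v\|_{C^{1,\beta_0}(B_{1/2})}\leq C_0$. The uniform bound comes from Lemma \ref{111raodongzhengzexing} in the bounded-$\xi$ case, or from linear constant-coefficient theory in the unbounded case. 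Hence
\begin{equation*}
\sup_{B_\rho}|u-v(0)-Dv(0)\cdot x|\leq \varepsilon + C_0\rho^{1+\beta_0}.
\end{equation*}
Since $\alpha^{\prime}<\beta_0$, we first fix $\rho$ with $C_0\rho^{1+\beta_0}\leq \tfrac12\rho^{1+\alpha^{\prime}}$, then $\varepsilon=\tfrac12\rho^{1+\alpha^{\prime}}$, which fixes $\delta$. This gives the case $k=1$ with $|a_1|,|b_1|\leq C_0$.

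\textbf{Induction.} Assuming the step for $k$, we set
\begin{equation*}
u_k(x):=\frac{(u-\ell_k)(\rho^k x)}{\rho^{k(1+\alpha^{\prime})}}.
\end{equation*}
Then $\|u_k\|_{L^\infty(B_1)}\leq1$, and $u_k$ solves \eqref{531} with $\xi_k=b_k\rho^{-k\alpha^{\prime}}$. The rescaled data are
\begin{equation*}
\Phi_k(\zeta,x)=\rho^{-k\alpha^{\prime}(\cdot)}\Phi(\rho^{k\alpha^{\prime}}\zeta,\rho^kx)\ \text{(suitably normalized)},\qquad f_k=\rho^{k(1-\alpha^{\prime})}f(\rho^k\cdot)/\text{normalization},
\end{equation*}
and $\mathcal{H}_k$ is defined analogously. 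Concretely, dividing the equation by $\rho^{k\alpha^{\prime}p(\rho^kx)}$ keeps $\Phi_k$ within the structure \eqref{12}--\eqref{13} with exponents $p(\rho^k\cdot),q(\rho^k\cdot)$ and modulating coefficient $a(\rho^k x)\rho^{k\alpha^{\prime}(q-p)}\geq0$. The source then gets the factor $\rho^{k(1-\alpha^{\prime})-k\alpha^{\prime}p(\rho^kx)}\leq \rho^{k(1-\alpha^{\prime}(1+q_{\max}))}\leq1$; this is exactly where $\alpha^{\prime}\leq 1/(1+q_{\max})$ enters. Similarly $\mathcal{K}_k\leq\mathcal{K}$, and $\mathcal{M}_k=\mathcal{M}\rho^{k(1-\alpha^{\prime}(1+p(\cdot)-m))}\leq \mathcal{M}$, using $m\leq 1+p_{\min}$.

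\textbf{Main obstacle.} The hard point is controlling the quantity $\mathcal{M}_k(|\xi_k|^{(m-p_{\min})_+}+1)$ demanded by Lemma \ref{bijin} when $m>p_{\min}$, because $|\xi_k|$ may grow like $\rho^{-k\alpha^{\prime}}$. Since $|b_k|\leq C\sum\rho^{j\alpha^{\prime}}\leq C^{\ast}$, we have
\begin{equation*}
\mathcal{M}_k|\xi_k|^{m-p_{\min}}\lesssim \mathcal{M}\rho^{k(1-\alpha^{\prime}(1+p_{\min}-m))}\,C^{\ast}\rho^{-k\alpha^{\prime}(m-p_{\min})}=C^{\ast}\mathcal{M}\rho^{k(1-\alpha^{\prime}(1+p_{\min}))}\leq C^{\ast}\mathcal{M}.
\end{equation*}
To make this rigorous we use the exact powers, not $p_{\min}$ alone, tracking $p(\rho^kx)\geq p_{\min}$ carefully, possibly normalizing by $\rho^{k\alpha^{\prime}p_{\min}}$ instead. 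The resulting bound is at most $C^{\ast}\delta$. Shrinking $\delta$ so that $C^{\ast}\delta\leq\sigma$ then closes the induction, since $C^{\ast}$ depends only on $C_0$ and $\rho$, which are already fixed.

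Applying Lemma \ref{bijin} to $u_k$ produces $v_k$ with $|v_k(0)|,|Dv_k(0)|\leq C_0$. We then define
\begin{equation*}
\ell_{k+1}=\ell_k+\rho^{k(1+\alpha^{\prime})}\big(v_k(0)+Dv_k(0)\cdot x/\rho^k\big),
\end{equation*}
which yields the $(k+1)$ estimate and the coefficient bounds. Summing the geometric series completes the proof.
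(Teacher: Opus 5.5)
Your proposal is correct and is essentially the paper's own proof: the same Caffarelli-type iteration through Lemma~\ref{bijin}, the same rescaling $u_k$ solving \eqref{531} with $\xi_k=\rho^{-k\alpha^{\prime}}b_k$, the same bound $|b_k|\le C^{\ast}$, and the same order of choices ($\rho$ first, then $\varepsilon$ and hence $\sigma$, then $\delta$ small relative to $\sigma$ and $C^{\ast}$).

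One bookkeeping fix is needed in your ``main obstacle'' display. The $x$-dependent factor must be bounded by its worst case $p(\rho^k x)=q_{\rm max}$, not by $p_{\rm min}$. This gives $\mathcal{M}\rho^{k(1-\alpha^{\prime}(1+q_{\rm max}-p_{\rm min}))}|b_k|^{m-p_{\rm min}}$, which is still at most $(C^{\ast})^{m-p_{\rm min}}\mathcal{M}$ because $\alpha^{\prime}\le \frac{1}{1+q_{\rm max}}$ and $p_{\rm min}\ge 0$. You should also drop the option of normalizing by $\rho^{k\alpha^{\prime}p_{\rm min}}$, since that would destroy the structure \eqref{12}--\eqref{13}.
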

\begin{proof}
	We aim to establish the existence of universal constants $0<\rho<\frac{1}{2}$, $C>0$, and a sequence of affine functions 
	 $$l_{j}(x)=a_{j}+b_{j}\cdot x$$
	 with $\left\{a_{j}\right\}_{j\in \mathbb{N}}\subset \mathbb{R}$ and $\left\{b_{j}\right\}_{j\in \mathbb{N}}\subset \rn$, satisfying, for all $j\in\mathbb{N}$, the following two estimates:
	\begin{equation}\label{711}
		\|u-l_{j}\|_{L^{\infty}(B_{\rho^{j}})}\leq \rho^{j(1+\alpha^{\prime})},
	\end{equation}
	\begin{equation}\label{712}
		|a_{j}-a_{j-1}|+\rho^{j-1}|b_{j}-b_{j-1}|\leq C\rho^{(j-1)(1+\alpha^{\prime})}.
	\end{equation}
	As usual, we prove this by means of an induction argument.
	For clarity of presentation, we divide the proof into three steps.\\
	{\bf Step 1. Basis of induction.} We first find an affine function $l_{1}$ and a universal constant $0<\rho<\frac{1}{2}$ satisfying 
	\begin{equation}\label{651}
		\|u-l_{1}\|_{L^{\infty}(B_{\rho})}\leq \rho^{1+\alpha^{\prime}}.
	\end{equation}
	 By Lemma \ref{bijin}, there exists a function $v\in C_{\rm loc}^{1,\beta_{0}}(B_{3/4})$ such that
	\begin{equation}\label{666}
		\|u-v\|_{L^{\infty}(B_{1/2})}\leq \varepsilon,
	\end{equation}
where $\varepsilon>0$ to be selected later. Remember that the existence of such a function $v$ is guaranteed by Lemma \ref{bijin}, provided that $\delta>0$ is small enough.
	
	According to the $C^{1,\beta_{0}}$-regularity of $v$, we have
	\begin{equation}\label{zhengzexing34}
		\sup\limits_{x\in B_{\rho}}\Abs{v(x)-\left(v(0)+Dv(0)\cdot x\right)}\leq C\rho^{1+\beta_{0}}\quad {\rm for \;all\;} \rho\in\left(0,\frac{1}{2}\right),
	\end{equation}
	\begin{equation*}
		|v(0)|+|Dv(0)|\leq C,
	\end{equation*}
	where $C>0$ is a universal constant. Let us denote
	\begin{equation*}
		l_{1}(x):=a_{1}+b_{1}\cdot x:=v(0)+Dv(0)\cdot x.
	\end{equation*}
	A combination of the triangle inequality  with \eqref{666} and \eqref{zhengzexing34} yields that
	\begin{equation}\label{652}
		\sup\limits_{x\in B_{\rho}}\Abs{u(x)-l_{1}(x)}\leq \sup\limits_{x\in B_{\rho}}\Abs{u(x)-v(x)}+\sup\limits_{x\in B_{\rho}}\Abs{v(x)-l_{1}(x)}\leq \varepsilon+C\rho^{1+\beta_{0}}.
	\end{equation}	
	In light of $\alpha^{\prime}<\beta_{0}$, we take $0<\rho<\frac{1}{2}$ such that
		\begin{equation}\label{653}
	\rho\leq \left(\frac{1}{2C}\right)^{1/(\alpha^{\prime}-\beta_{0})} \quad {\rm and}\quad \varepsilon:=\frac{1}{2}\rho^{1+\alpha^{\prime}}.
	\end{equation}	
	Consequently, combining \eqref{652} with \eqref{653} yields the desired result \eqref{651}. Once we fix the value of $\varepsilon$ here, the quantity $\sigma$ in Lemma \ref{bijin} is determined accordingly. Let $\delta>0$ be small enough such that
\begin{equation}\label{deltaxuanze}
	L\delta \left((2C)^{(m-p_{\rm min})_{+}}+1\right)\leq \sigma
\end{equation}	
with $\sigma$ appearing in the statement of Lemma \ref{bijin}.
To conclude this step, let $a_{0}=b_{0}=0$. These choices and \eqref{651} verify that estimates \eqref{711} and \eqref{712} are satisfied for $j=1$.\\
{\bf Step 2. Induction process}.  Suppose that the conclusion holds true for $j=1,2,...,k$. Now we are going to show that the claim also holds for $j=k+1$. To this end, we introduce an auxiliary function $u_{k}:B_{1}\rightarrow \mathbb{R}$ as
\begin{equation*}
	u_{k}(x):=\frac{u\left(\rho^{k}x\right)-l_{k}\left(\rho^{k}x\right)}{\rho^{k(1+\alpha^{\prime})}}.
\end{equation*}
It is easy to check that $u_{k}$ solves
\begin{equation*} 
	-\Phi_{k}({D{u_{k}}+\xi_{k}}, x)\Delta_{p,\xi_{k}}^{N}u_{k}+\mathcal{H}_{k}(D{u_{k}}+\xi_{k}, x)	= {f_{k}}(x) \quad \text{in} \quad  B_{1}
\end{equation*}
in the viscosity sense, where
	\begin{align*}
	{\Phi_{k}}(t,x):=&\rho^{-k\alpha^{\prime} p_{k}(x)}\Phi\left(\rho^{k\alpha^{\prime}}t,\rho^{k}x\right),\\
	{\Upsilon_{k}}(|t|,x):=&|t|^{p_{k}(x)}+{a_{k}}(x)|t|^{q_{k}(x)},\\
	{a_{k}}(x):=&\rho^{-k\alpha^{\prime}(p_{k}(x)-q_{k}(x))}a(\rho^{k} x),\\
	\mathcal{H}_{k}(t,x):=&\rho^{k(1-\alpha^{\prime}(1+p_{k}(x)))}\mathcal{H}\left(\rho^{k\alpha^{\prime}}t,\rho^{k}x\right),\\
	{f_{k}}(x):=&\rho^{k(1-\alpha^{\prime}(1+p_{k}(x)))}f(\rho^{k}x),\\
	p_{k}(x):=&p(\rho^{k}x),\quad q_{k}(x):=q(\rho^{k}x), \quad \xi_{k}:=\rho^{-k\alpha^{\prime}}b_{k}.
\end{align*}
Note that
\begin{equation*}
	K_{1}	\Upsilon_{k}(|t|,x)\leq	{\Phi_{k}}(t,x) \leq K_{2}	\Upsilon_{k}(|t|,x)\quad {\rm for}\;\,(t,x)\in\mathbb{R}^{d}\times B_{1},
\end{equation*}
and $0\leq p_{\rm min}\leq p_{k}(x)\leq q_{k}(x)\leq q_{\rm max}<\infty$.			
By induction assumption, we have
$$\|{u_{k}}\|_{L^{\infty}\left(B_{1}\right)}\leq 1.$$
It follows from \eqref{15} and $\rho\in(0,\frac{1}{2})$ that
\begin{equation*}
	|\mathcal{H}_{k}(t,x)|\leq \rho^{k(1-\alpha^{\prime}(1+q_{\rm max}))}\left(\mathcal{K}+\mathcal{M}\rho^{k\alpha^{\prime} m}|t|^{m}\right)=:\mathcal{K}_{k}+\mathcal{M}_{k}|t|^{m}.
\end{equation*} 
According to $\alpha^{\prime}\in \left(0,\frac{1}{1+q_{\rm max}}\right]$ and \eqref{811}, we deduce that
\begin{equation*}
	\|{f_{k}}\|_{L^{\infty}\left(B_{1}\right)}\leq \rho^{k(1-\alpha^{\prime}(1+q_{\rm max}))}\|{f}\|_{L^{\infty}\left(B_{1}\right)}\leq \delta,
\end{equation*}
\begin{equation*}
	\mathcal{K}_{k}=\rho^{k\left(1-\alpha^{\prime}(1+q_{\rm max})\right)}\mathcal{K}\leq \delta.
\end{equation*}
Now we analyze the quantity $\mathcal{M}_{k}\left(\abs{\xi_{k}}^{(m-p_{\rm min})_{+}}+1\right)$. Applying \eqref{811} and $\xi_{k}=\rho^{-k\alpha^{\prime}}b_{k}$ to obtain
\begin{equation}\label{Hamilliang}
	\mathcal{M}_{k}\left(\abs{\xi_{k}}^{(m-p_{\rm min})_{+}}+1\right)\leq \delta \rho^{k\left(1-\alpha^{\prime}(1+q_{\rm max}-m)\right)-k\alpha^{\prime}(m-p_{\rm min})_{+}}\left(\abs{b_{k}}^{(m-p_{\rm min})_{+}}+1\right).
\end{equation}		
If $0<m\leq p_{\rm min}$,
by virtue of \eqref{Hamilliang} and $1-\alpha^{\prime}(1+q_{\rm max}-m)>0$, we get
$$\mathcal{M}_{k}\left(\abs{\xi_{k}}^{(m-p_{\rm min})_{+}}+1\right)\leq 2\delta \rho^{k\left(1-\alpha^{\prime}(1+q_{\rm max}-m)\right)}\leq 2\delta.$$
If $p_{\rm min}<m\leq 1+p_{\rm min}$, then
\begin{align*}
	\mathcal{M}_{k}\left(\abs{\xi_{k}}^{(m-p_{\rm min})_{+}}+1\right)\leq \delta \rho^{k\left(1-\alpha^{\prime}(1+q_{\rm max}-p_{\rm min})\right)}\left(\abs{b_{k}}^{m-p_{\rm min}}+1\right)
	\leq \delta\left(\abs{b_{k}}^{m-p_{\rm min}}+1\right),
\end{align*}
where we use the fact that $1-\alpha^{\prime}(1+q_{\rm max}-p_{\rm min})>0$ in the last inequality.
It follows from the induction hypothesis \eqref{712} and $\rho<\frac{1}{2}$ that		
\begin{align*}
	|b_{k}|\leq |b_{0}|+\sum_{j=1}^{k}\abs{b_{j}-b_{j-1}}\leq C\sum_{j=1}^{k}\rho^{\alpha^{\prime}(j-1)}\leq \frac{C}{1-\rho^{\alpha^{\prime}}}\leq 2C.
\end{align*}
In summary, for $0<m\leq 1+p_{\rm min}$, we have
\begin{align*}
	\mathcal{M}_{k}\left(\abs{\xi_{k}}^{(m-p_{\rm min})_{+}}+1\right)\leq \delta \left((2C)^{(m-p_{\rm min})_{+}}+1\right).
\end{align*}
As a consequence, combining the information above with \eqref{deltaxuanze}, we arrive at
\begin{equation*}
	\max\left\{\|{f_{k}}\|_{L^{\infty}\left(B_{1}\right)},\mathcal{K}_{k},\mathcal{M}_{k}\left(\abs{\xi_{k}}^{(m-p_{\rm min})_{+}}+1\right)\right\}\leq \sigma.
\end{equation*}	
At this moment, the assumptions in Lemma \ref{bijin} are satisfied. Thus, we can apply the conclusion from Step 1
 to obtain
\begin{equation*}
	\|u_{k}-\tilde{l}\|_{L^{\infty}(B_{\rho})}\leq \rho^{1+\alpha^{\prime}},
\end{equation*}
where $\tilde{l}(x)$ is an affine function of the form
$\tilde{l}(x)=\tilde{a}+\tilde{b}\cdot x$ with $|\tilde{a}|+|\tilde{b}|\leq C$. In the sequel, we define the  approximating affine function $l_{k+1}$ as
\begin{equation*}
	l_{k+1}(x):=a_{k+1}+b_{k+1}\cdot x,
\end{equation*}
where
\begin{equation*}
	a_{k+1}:=a_{k}+\rho^{k(1+\alpha^{\prime})}\tilde{a}\quad {\rm and}\quad     b_{k+1}:=b_{k}+\rho^{k\alpha^{\prime}}\tilde{b}.
\end{equation*}
Scaling back, we obtain
\begin{equation*}
	\|u-l_{k+1}\|_{L^{\infty}(B_{\rho^{k+1}})}=\rho^{k(1+\alpha^{\prime})}\|u_{k}-\tilde{l}\|_{L^{\infty}(B_{\rho})}
	\leq \rho^{(k+1)(1+\alpha^{\prime})}.
\end{equation*}
In addition, we have
\begin{equation*}
	|a_{k+1}-a_{k}|+\rho^{k}|b_{k+1}-b_{k}|=\rho^{k(1+\alpha^{\prime})}\left(\abs{\tilde{a}}+\abs{\tilde{b}}\right)\leq C\rho^{k(1+\alpha^{\prime})}.
\end{equation*}
Hence, we have proven that \eqref{711} and \eqref{712} hold for all $j\in \mathbb{N}$.\\
{\bf Step 3. Convergence analysis}. From Step 2, we know that $\{a_{j}\}_{j\in\mathbb{N}}\subset \mathbb{R}$, $\{b_{j}\}_{j\in\mathbb{N}}\subset \rn$ are Cauchy sequences, hence, they converge. Let us denote
\begin{equation*}
	\overline{a}:=\lim\limits_{j\rightarrow\infty}a_{j},
	\quad \overline{b}:=\lim\limits_{j\rightarrow\infty}b_{j}, \quad \overline{l}(x):=\overline{a}+\overline{b}\cdot x.
\end{equation*}	
For any $n\geq j$, a combination of the triangle inequality and \eqref{712} yields that
\begin{align*}
	|a_{j}-a_{n}|\leq \sum_{k=j}^{n-1}\abs{a_{k}-a_{k+1}}\leq C\sum_{k=j}^{n-1}\rho^{k(1+\alpha^{\prime})}\leq C\rho^{j(1+\alpha^{\prime})} \frac{1-\rho^{(n-j)(1+\alpha^{\prime})}}{1-\rho^{(1+\alpha^{\prime})}}.
\end{align*}	
Letting $n\rightarrow \infty$, we get	
\begin{align}\label{511}
	|a_{j}-\overline{a}|\leq  \frac{C\rho^{j(1+\alpha^{\prime})}}{1-\rho^{(1+\alpha^{\prime})}}.
\end{align}	
Similarly, we have
\begin{align}\label{512}
	|b_{j}-\overline{b}|\leq  \frac{C\rho^{j\alpha^{\prime}}}{1-\rho^{\alpha^{\prime}}}.
\end{align}	

Finally, given any $0<r\leq \rho$, we can find $j\in \mathbb{N}$ such that $\rho^{j+1}<r\leq \rho^{j}$.
Thus, combining \eqref{712} with \eqref{511} and \eqref{512}, we arrive at
\begin{align*}
	\|u-\overline{l}\|_{L^{\infty}(B_{r})}&\leq \|u-\overline{l}\|_{L^{\infty}(B_{\rho^{j}})}\\&\leq \|u-{l_{j}}\|_{L^{\infty}(B_{\rho^{j}})}+\|l_{j}-\overline{l}\|_{L^{\infty}(B_{\rho^{j}})}\\
	&\leq  \rho^{j(1+\alpha^{\prime})}+|a_{j}-\overline{a}|+\rho^{j}|b_{j}-\overline{b}|\\
	&\leq \rho^{j(1+\alpha^{\prime})}\left(1+\frac{2C}{1-\rho^{\alpha^{\prime}}}\right)\\
	&\leq \frac{1}{\rho^{1+\alpha^{\prime}}}\left(1+\frac{2C}{1-\rho^{\alpha^{\prime}}}\right)r^{1+\alpha^{\prime}}\\
	&\leq Cr^{1+\alpha^{\prime}}.
\end{align*}
This implies that $u$ is $C^{1,\alpha}$ at $0$ and complete the proof.
\end{proof}

In the end, with the aid of Propositions \ref{qiyizhuantuihua} and \ref{prop3.4}, we are in a position to complete the proof of Theorem \ref{thm1}.

\begin{proof}[Proof of Theorem \ref{thm1}] Initially, we exploit the scaling features of \eqref{55mainmodel} to reduce the problem to a smallness regime. That is, we may assume that
	\begin{equation}\label{62asmall}
		\|{u}\|_{L^{\infty}(B_{1})}\leq 1, \quad  \max\left\{\|{f}\|_{L^{\infty}(B_{1})},\mathcal{K},\mathcal{M}\right\}\leq \delta
	\end{equation} 
	for a constant $\delta>0$ coming from Proposition \ref{prop3.4}. In fact, we define rescaled 
	function $\tilde{u}:B_{1}\rightarrow \mathbb{R}$ by
	$$\tilde{u}(x)=\frac{u(\tau x)}{K},$$
	where $K\geq 1\geq \tau$ are constants to be determined later. Then we can readily check that
	$\tilde{u}$ solves
	\begin{equation*}
		-\tilde{\Phi}({D\tilde{u}}, x)\Delta_{p}^{N}\tilde{u}+\tilde{\mathcal{H}}(D\tilde{u},x) = \tilde{f}(x) \quad \text{in} \quad  B_{1}
	\end{equation*}
	in the viscosity sense, where
	\begin{align*}
		\tilde{\Phi}(t,x):=&\left(\frac{\tau}{K}\right)^{p(\tau x)}\Phi\left(\frac{K}{\tau}t,\tau x\right),\\
		\tilde{\Upsilon}(|t|,x):=&|t|^{p(\tau x)}+\tilde{a}(x)|t|^{q(\tau x)},\\
		\tilde{a}(x):=&\left(\frac{\tau}{K}\right)^{p(\tau x)-q(\tau x)}a(\tau x),\\
		\tilde{\mathcal{H}}(t,x):=&\frac{\tau^{2+p(\tau x)}}{K^{1+p(\tau x)}}\mathcal{H}\left(\frac{K}{\tau}t,\tau x\right),\\
		\tilde{f}(x):=&\frac{\tau^{2+p(\tau x)}}{K^{1+p(\tau x)}}f(\tau x).
	\end{align*}
	Note that 
	\begin{equation*}
		L_{1}	\tilde{\Upsilon}(|\xi|,x)\leq	\tilde{\Phi}(\xi,x) \leq L_{2}	\tilde{\Upsilon}(|\xi|,x)\quad {\rm for}\;\,(\xi,x)\in\mathbb{R}^{d}\times B_{1},
	\end{equation*}
	that is, $\tilde{\Phi}$ satisfies the same structural assumption as $\Phi$. Combining \eqref{15} with $\frac{K}{\tau}\geq 1$ yields  
	\begin{equation*}
		|\tilde{\mathcal{H}}(t,x)|\leq \frac{\tau^{2+p_{\rm min}}}{K^{1+p_{\rm min}}}\bigg(\mathcal{K}+\mathcal{M}\left(\frac{K}{\tau}\right)^{m}|t|^{m}\bigg)=:\tilde{\mathcal{K}}+\tilde{\mathcal{M}}|t|^{m}.
	\end{equation*}
	Now, we select
	\begin{equation*}
		K:=
		\begin{cases}
			1+\|u\|_{L^{\infty}\left(B_{1}\right)}+\left(\|f\|_{L^{\infty}\left(B_{1}\right)}+\mathcal{K}\right)^{\frac{1}{1+p_{\rm min}}}+\mathcal{M}^{\frac{1}{1+p_{\rm min}-m}} & \text{for}\;\; m<1+p_{\rm min}, \\
			1+\|u\|_{L^{\infty}\left(B_{1}\right)} &\text{for}\;\;  m=1+p_{\rm min},
		\end{cases}
	\end{equation*}
	and
	\begin{equation*}
	\tau:=
		\begin{cases}
			\min\left\{1,\delta^{\frac{1}{2+p_{\rm min}}},\delta^{\frac{1}{2+p_{\rm min}-m}}\right\} & \text{for} \;\; m<1+p_{\rm min}, \\
			\min\left\{1,\left(\frac{\delta}{\|f\|_{L^{\infty}(B_{1})}+\mathcal{K}}\right)^{\frac{1}{2+p_{\rm min}}},\frac{\delta}{\mathcal{M}}\right\} &\text{for}\;\;  m=1+p_{\rm min}.
		\end{cases}
	\end{equation*}
	With such choice, we immediately deduce
	\begin{equation*}
		\|\tilde{u}\|_{L^{\infty}\left(B_{1}\right)}\leq 1\quad {\rm and }\quad
		\max\left\{\|\tilde{f}\|_{L^{\infty}\left(B_{1}\right)},\tilde{\mathcal{K}},\tilde{\mathcal{M}}\right\}
		\leq \delta.
	\end{equation*}	
	Therefore, $\tilde{u}$ solves an equation possessing the same structure as \eqref{55mainmodel} with the smallness assumptions \eqref{62asmall}.
	
	For the case $p_{\rm min}\geq 0$, we employ Proposition \ref{prop3.4} to obtain $u$ is $C^{1,\alpha^{\prime}}$ at $0$. Then by standard translation argument and covering argument, we can conclude that $u\in C_{\rm loc}^{1,\alpha^{\prime}}(B_{1})$.
	
	On the other hand, for the case $-1<p_{\rm min}<0$, with the aid of Proposition \ref{qiyizhuantuihua}, we see that $u$ is a viscosity solution of the following equation
\begin{equation*}
	-\hat{\Phi}({Du}, x)\Delta_{p}^{N}u+\hat{\mathcal{H}}(Du,x)	
	=\hat{f}(x) \quad  \text{in} \quad B_{1},
\end{equation*}
where
\begin{align*}
	\hat{\Phi}({t},x):=&\abs{t}^{-p_{\rm min}}{\Phi}({t},x),\quad
	\hat{\mathcal{H}}(t,x):=\abs{t}^{-p_{\rm min}}{\mathcal{H}}(t,x),\quad
	\hat{f}(x):=\abs{Du}^{-p_{\rm min}}f(x).
\end{align*}
Note that, for all $(t,x)\in\rn\times B_{1}$, 
\begin{equation*}
	K_{1}\left(|t|^{p(x)-p_{\rm min}}+{a}(x)|t|^{q(x)-p_{\rm min}}\right)\leq\hat{\Phi}(t,x) \leq K_{2} \left(|t|^{p(x)-p_{\rm min}}+{a}(x)|t|^{q(x)-p_{\rm min}}\right)
\end{equation*}
and $0\leq p(x)-p_{\rm min}\leq q(x)-p_{\rm min}\leq q_{\rm max}-p_{\rm min}<\infty$. In view of \eqref{15} and $0<-p_{\rm min}<m-p_{\rm min}\leq 1$,  we have
$$|\hat{\mathcal{H}}(t,x)|\leq |t|^{-p_{\rm min}}\left(\mathcal{K}+\mathcal{M}|t|^{m}\right)\leq \hat{\mathcal{K}}+\hat{\mathcal{M}}|t|^{m-p_{\rm min}}\quad {\rm for\;all} \;(t,x)\in \rn\times B_{1},$$
where $\hat{\mathcal{K}}:=\mathcal{K}$ and $\hat{\mathcal{M}}:=\mathcal{K}+\mathcal{M}$. Moreover, we apply Proposition \ref{jin2} to obtain
\begin{equation}\label{lip1}
	[u]_{C^{0,1}(B_{3/4})}\leq C
\end{equation}
for a universal constant $C>0$. As a consequence, the gradient $Du$ is bounded almost everywhere. Then we can estimate
\begin{equation*}
	\|\hat{f}\|_{L^{\infty}\left(B_{3/4}\right)}\leq C^{-p_{\rm min}}\|{f}\|_{L^{\infty}\left(B_{3/4}\right)}. 
\end{equation*}
At this point, we reduce the singular case to the degenerate case, and $\hat{\Phi}$, $\hat{\mathcal{H}}$ satisfy the assumptions \eqref{12}-\eqref{15}. Therefore, for $-1<p_{\rm min}<0$, we can obtain the $C^{1,\alpha^{\prime}}$-regularity with $\alpha^{\prime}\in (0,\beta_{0})\cap \left(0,\frac{1}{1+q_{\rm max}-p_{\rm min}}\right]$ by repeating the previous arguments. The proof is complete.
\end{proof}
\section{Optimal Local H\"{o}lder regularity of the gradient}\label{section444}
\subsection{Gradient approximation}
 With the aid of the regularity result from Theorem \ref{thm1} to show that the solutions to \eqref{55mainmodel} can be approximated by $p$-harmonic function in a $C_{\rm loc}^{1}$ fashion.
\begin{lemma}\label{lem5.1}
	Let $u\in C(B_{1})$ be a normalized viscosity solution of \eqref{55mainmodel} under assumptions \eqref{12}-\eqref{15} with $p_{\rm min}\geq 0$. Then, for any $\eta>0$, there exists $\iota\in (0,1)$  
	such that if
	\begin{equation}\label{smallcondition}
		\max\left\{\|f\|_{L^{\infty}(B_{1})},\mathcal{K},\mathcal{M}\right\}\leq \iota,
	\end{equation}
	then there exists a function $v$ which is a weak solution of
	$$-\Delta_{p} v=0 \quad {\rm in}\;\; B_{3/4}$$
such that
	\begin{equation*}
		\max\left\{\|u-v\|_{L^{\infty}(B_{1/2})},\|Du-Dv\|_{L^{\infty}(B_{1/2})}\right\}\leq \eta.
	\end{equation*}
\end{lemma}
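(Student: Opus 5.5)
We argue by contradiction and compactness, using Theorem \ref{thm1} for uniform $C^{1,\alpha'}$ bounds. Suppose the statement fails. Then there are $\eta_0>0$ and sequences $\Phi_j$, $\mathcal{H}_j$, $f_j$, $u_j$ such that: $\Phi_j$ satisfies \eqref{12}--\eqref{14} with the same $K_1,K_2,p_{\rm min},q_{\rm max}$; $\mathcal{H}_j$ satisfies \eqref{15} with constants $\mathcal{K}_j,\mathcal{M}_j$; $u_j$ is a normalized viscosity solution of the corresponding equation; and $\max\{\|f_j\|_{L^\infty(B_1)},\mathcal{K}_j,\mathcal{M}_j\}\leq 1/j$. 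Finally, every weak $p$-harmonic $v$ in $B_{3/4}$ satisfies $\max\{\|u_j-v\|_{L^\infty(B_{1/2})},\|Du_j-Dv\|_{L^\infty(B_{1/2})}\}>\eta_0$.

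Since the data are uniformly small and $\|u_j\|_{L^\infty}\leq 1$, Theorem \ref{thm1}(i) gives a $C^{1,\alpha'}(B_{3/4})$ bound independent of $j$. Here one applies it on a slightly larger ball via translation and scaling, and notes that the constant $C(1+1+(2/j)^{1/(1+p_{\rm min})}+\ldots)$ stays bounded. In case $m=1+p_{\rm min}$ one uses (ii), whose constant is monotone in the bounded data. Arzel\`a--Ascoli then gives a subsequence with $u_j\to u_\infty$ and $Du_j\to Du_\infty$ uniformly on $\overline{B}_{1/2}$, indeed locally uniformly in $B_{3/4}$.

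The main step is to show that $u_\infty$ is a viscosity solution of $-\Delta_p^N u_\infty=0$ in $B_{3/4}$. Lemma \ref{dengjia} then converts this into $u_\infty$ being a weak $p$-harmonic function, and taking $v=u_\infty$ contradicts the choice of $\eta_0$ for large $j$. For the supersolution property, take a quadratic $\varphi$ touching $u_\infty$ strictly from below at $\bar x$. Perturbed touching points $x_j\to\bar x$ exist for $u_j$.

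If $D\varphi(\bar x)\neq0$, then $|D\varphi(x_j)|\geq c>0$ for large $j$. Hence $\Phi_j(D\varphi(x_j),x_j)\geq K_1\min\{c^{p_{\rm min}},c^{q_{\rm max}}\}$, and dividing the viscosity inequality by it gives
\begin{equation*}
-\Delta_p^N\varphi(x_j)\geq -\frac{|f_j(x_j)|+\mathcal{K}_j+\mathcal{M}_j|D\varphi(x_j)|^m}{K_1\min\{c^{p_{\rm min}},c^{q_{\rm max}}\}}\to 0.
\end{equation*}
Passing to the limit yields the inequality at $\bar x$.

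If $D\varphi(\bar x)=0$, we must prove $-\Delta\varphi-(p-2)\lambda_{\min}(D^2\varphi)\geq0$ for $p\geq2$, and the analogue with $\lambda_{\max}$ for $p<2$. I would repeat Case 2 of Lemma \ref{bijin} with $\xi_j=0$. Assume the inequality fails, so $M=D^2\varphi$ has a positive eigenspace $T$. Perturb with $\gamma|P_T x|$, or with $\gamma\langle e,P_Tx\rangle$ when the touching point lies in $T^\perp$. This forces the test gradient to have modulus bounded below by a multiple of $\gamma$ while the operator stays strictly negative, and dividing by $\Phi_j$ again gives a contradiction as $j\to\infty$ for fixed $\gamma$.

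Alternatively, and more simply, we can exploit the $C^1$ convergence. The set where $Du_\infty\ne0$ is handled as above. The degenerate points are where the argument of Lemma \ref{bijin} is needed, so that is the expected obstacle; but since that lemma already contains the argument, it transfers verbatim. The subsolution property is symmetric. Both $\|u_j-u_\infty\|$ and $\|Du_j-Du_\infty\|$ on $B_{1/2}$ tend to zero, which gives the contradiction.
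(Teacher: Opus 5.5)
Your proposal is correct and matches the paper's proof: a contradiction argument with data bounded by $1/j$, uniform $C^{1,\alpha'}$ bounds from Theorem \ref{thm1}, Arzel\`a--Ascoli in the $C^1$ topology, and the limiting argument of Lemma \ref{bijin} with $\xi_j=0$ to show $-\Delta_p^N u_\infty=0$. You then conclude via Lemma \ref{dengjia} with $v=u_\infty$, as the paper does, and you give more detail than the paper on the nonvanishing-gradient case and on why the Theorem \ref{thm1} constants stay uniform in $j$.
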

\begin{proof}
	We prove by contradiction. Suppose that there exist $\eta_{0}>0$ and sequences
	of functions $\{\Phi_{j}\}_{j\in \mathbb{N}}$, $\{\mathcal{H}_{j}\}_{j\in \mathbb{N}}$, $\{f_{j}\}_{j\in \mathbb{N}}$, $\{u_{j}\}_{j\in \mathbb{N}}$ such that $u_{j}\in C({B_{1}})$ is a viscosity solution of 
		\begin{equation*}
			-\Phi_{j}({Du_{j}}, x)\Delta_{p}^{N} u_{j}+\mathcal{H}_{j}(Du_{j},x) =f_{j}(x) \quad  \text{in} \quad B_{1}
		\end{equation*}
		with $ \|u_{j}\|_{L^{\infty}(B_{1})}\leq 1$, where $f_{j}\in C({B_{1}})$, $\Phi_{j}$ satisfies \eqref{12} and \eqref{13} with $p_{j}(\cdot),q_{j}(\cdot)\in C(B_{1})$, $0\leq p_{\rm min}\leq p_{j}(x)\leq q_{j}(x)\leq q_{\rm max}<\infty$ and $0\leq a_{j}(\cdot)\in C(B_{1})$, $\mathcal{H}_{j}:\mathbb{R}^{d} \times B_{1}\rightarrow \mathbb{R}$ is continuous and there exist constants $\mathcal{K}_{j},\mathcal{M}_{j}>0$ such
		that
		\begin{equation*}
			|\mathcal{H}_{j}(t,x)|\leq \mathcal{K}_{j}+\mathcal{M}_{j}|t|^{m} \quad  {\rm for\; every\;}(t,x) \in\mathbb{R}^{d}\times B_{1},
		\end{equation*}
	 and
		\begin{equation*}
			\max\left\{\|f_{j}\|_{L^{\infty}(B_{1})}, \mathcal{K}_{j},\mathcal{M}_{j}\right\}\leq \frac{1}{j}.
		\end{equation*}	
Nonetheless, for any function $v$ solves 
$$-\Delta_{p} v=0 \quad {\rm in}\;\; B_{3/4},$$
it holds
\begin{equation}\label{contraction111}
	\max\left\{\|u_{j}-v\|_{L^{\infty}(B_{1/2})},\|Du_{j}-Dv\|_{L^{\infty}(B_{1/2})}\right\}> \eta_{0} \quad {\rm for\;any}\;j\in\mathbb{N}.
\end{equation}

We know from Theorem \ref{thm1} that the sequence $\{u_{j}\}_{j\in\mathbb{N}}\subset C_{\rm loc}^{1,\alpha^{\prime}}(B_{1})$ for some $\alpha^{\prime}\in (0,1)$. By applying Arzel${\rm \grave{a}}$-Ascoli theorem, up to a subsequence, we know that $u_{j}$ converges locally uniformly in $B_{1}$ to some continuous function $u_{\infty}$ in the $C^{1}$-topology. Next, by arguing as in Lemma \ref{bijin}, we can 
conclude that $u_{\infty}$ is a viscosity solution of 
	$$-\Delta_{p}^{N} u_{\infty}=0 \quad {\rm in}\;\; B_{3/4}.$$
	By Lemma \ref{dengjia}, $u_{\infty}$ is a weak solution of 
	$$-\Delta_{p} u_{\infty}=0 \quad {\rm in}\;\; B_{3/4}.$$
	Finally, taking $v=u_{\infty}$, we reach a contradiction with \eqref{contraction111} for $j$ sufficiently large. This completes the proof of the desired result.                                     	
\end{proof}
\subsection{Improved Oscillation-Type Estimate}
To begin with, by using the approximation with $p$-harmonic function, we obtain an oscillation estimate for solutions $u$ to \eqref{55mainmodel} near the critical set $\left\{x:Du(x)=0\right\}$.
\begin{lemma}\label{lem5.2}
	Under the assumptions of Lemma \ref{lem5.1}, for every $0<\alpha<{\alpha}_{0}$, there exists a universal constant $0<\varrho<\frac{1}{2}$ such that 
	\begin{equation}\label{911}
		\sup_{B_{\varrho}}\Abs{u(x)-u(0)}\leq \rho^{1+\alpha}+|Du(0)|\varrho.
	\end{equation}
\end{lemma}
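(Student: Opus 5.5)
We prove the oscillation estimate \eqref{911} (reading $\rho^{1+\alpha}$ as $\varrho^{1+\alpha}$) by approximating $u$ in $C^1$ by a $p$-harmonic function through Lemma \ref{lem5.1}, and then exploiting the $C^{1,\alpha_0}$ regularity of $p$-harmonic functions recalled in Remark \ref{ptiaohezhengzexing}.

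Fix $0<\alpha<\alpha_0$. Let $v$ be any weak solution of $-\Delta_p v=0$ in $B_{3/4}$ with $\|v\|_{L^\infty(B_{3/4})}\le 2$. Such $v$ enjoy a uniform a priori estimate $[Dv]_{C^{0,\alpha_0}(B_{1/2})}\le C_0$ with $C_0=C_0(d,p)$. Consequently, for every $x\in B_\varrho$,
\begin{equation*}
|v(x)-v(0)-Dv(0)\cdot x|\le C_0\varrho^{1+\alpha_0}.
\end{equation*}
We now write
\begin{equation*}
u(x)-u(0)=\big(u-v\big)(x)-\big(u-v\big)(0)+\big(v(x)-v(0)-Dv(0)\cdot x\big)+\big(Dv(0)-Du(0)\big)\cdot x+Du(0)\cdot x .
\end{equation*}
We use the gradient part of the approximation to control the third term and the mean value theorem applied to $u-v$ to control the first. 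This gives, for $x\in B_\varrho$,
\begin{equation*}
|u(x)-u(0)|\le \eta\varrho+C_0\varrho^{1+\alpha_0}+\eta\varrho+|Du(0)|\varrho .
\end{equation*}
The first term could also be bounded by $2\eta$, but the bound $\eta\varrho$ via $\|D(u-v)\|_{L^\infty}\le\eta$ is better, since it carries a factor $\varrho$. Note that $u\in C^1$ by Theorem \ref{thm1}, so $Du(0)$ makes sense and the mean value theorem applies on $B_{1/2}$.

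The constants are chosen in a definite order. First pick $\varrho\in(0,\tfrac12)$ so small that $C_0\varrho^{1+\alpha_0}\le \tfrac12\varrho^{1+\alpha}$, that is, $\varrho^{\alpha_0-\alpha}\le 1/(2C_0)$; this is possible precisely because $\alpha<\alpha_0$. Next pick $\eta=\tfrac14\varrho^{\alpha}$, so that $2\eta\varrho\le\tfrac12\varrho^{1+\alpha}$. Finally, Lemma \ref{lem5.1} supplies $\iota=\iota(\eta)$, which we take as the smallness threshold in \eqref{smallcondition}. Since $\|u\|_{L^\infty}\le1$ and $\|u-v\|_{L^\infty(B_{1/2})}\le\eta<1$, we have $\|v\|_{L^\infty(B_{1/2})}\le 2$. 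We therefore apply the $p$-harmonic estimate on a slightly smaller ball, say $B_{1/4}$, with a universal constant; requiring $\varrho<1/4$ causes no difficulty. Summing the three pieces yields \eqref{911}.

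The main obstacle is the uniformity of the $C^{1,\alpha_0}$ estimate for $v$. The function $v$ produced by the compactness argument in Lemma \ref{lem5.1} carries no explicit bound beyond $L^\infty$, so we must use that the $p$-harmonic $C^{1,\alpha_0}$ estimate is genuinely a priori, depending only on $d$, $p$ and $\|v\|_{L^\infty}$. Interpreting $\alpha_0$ as the exponent for which such a quantitative interior estimate holds, as Remark \ref{ptiaohezhengzexing} and Lemma \ref{111tiaohezhengzexing} suggest, settles this. If only a $C^{1,\alpha_0-\epsilon}$ estimate is available, we replace $\alpha_0$ by some $\alpha_0'\in(\alpha,\alpha_0)$ and the same argument goes through unchanged.
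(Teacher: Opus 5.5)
Your proof is correct and follows the paper's argument: you approximate $u$ in $C^1$ by a $p$-harmonic $v$ via Lemma~\ref{lem5.1}, invoke the $C^{1,\alpha_0}$ Taylor estimate for $v$, and fix $\varrho$ first (using $\alpha<\alpha_0$), then $\eta$, then $\iota$. The differences are cosmetic: the paper bounds $|u(x)-u(0)-Du(0)\cdot x|\le 3\eta+C\varrho^{1+\alpha_0}$ with $\eta<\frac{1}{6}\varrho^{1+\alpha}$ rather than using the mean value theorem to gain a factor $\varrho$, and your attention to the uniformity of the $p$-harmonic estimate (the $L^\infty$ bound on $v$, and possibly replacing $\alpha_0$ by some $\alpha_0'\in(\alpha,\alpha_0)$) covers points the paper passes over without comment.
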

\begin{proof}
Let $\eta>0$ to be ﬁxed a posteriori. From Lemma \ref{lem5.1}, we know that there exists $\iota>0$ such that whenever
\begin{equation*}
	\max\left\{\|f\|_{L^{\infty}(B_{1})},\mathcal{K},\mathcal{M}\right\}\leq \iota,
\end{equation*}
then there exist a $p$-harmonic function $v$ satisfying
\begin{equation}\label{211}
	\max\left\{\|u-v\|_{L^{\infty}(B_{1/2})},\|Du-Dv\|_{L^{\infty}(B_{1/2})}\right\}\leq \eta.
\end{equation}

By virtue of Remark \ref{ptiaohezhengzexing}, we know that $v\in C_{\rm loc}^{1,{\alpha}_{0}}(B_{3/4})$ with the estimate
\begin{equation*}
	\sup_{B_{\varrho}}\Abs{v(x)-v(0)-Dv(0)\cdot x}\leq C\varrho^{1+{\alpha}_{0}} \quad {\rm for\; all}\;\varrho\in\left(0,\frac{1}{2}\right).
\end{equation*}
This together with \eqref{211} immediately yields that
\begin{equation*}
	\begin{split}
		\sup_{B_{\varrho}}\Abs{u(x)-u(0)-Du(0)\cdot x}\leq& \sup_{B_{\varrho}}\Abs{u(x)-v(x)}+\sup_{B_{\varrho}}\Abs{v(x)-v(0)-Dv(0)\cdot x}\\
		&+\sup_{B_{\varrho}}\Abs{u(0)-v(0)+(Dv(0)-Du(0))\cdot x}\\
		\leq& 3\eta+C\varrho^{1+{\alpha}_{0}}\leq \varrho^{1+\alpha}
	\end{split}
\end{equation*}
as long as we make the following universal choices
\begin{equation*}
	\varrho\in \left(0,\min\left\{\frac{1}{2},\left(\frac{1}{2C}\right)^{{\alpha}_{0}-\alpha}\right\}\right)\quad {\rm and}\quad \eta\in \left(0,\frac{1}{6}\varrho^{1+\alpha}\right).
\end{equation*}
Therefore, we obtain \eqref{911}, thereby finishing the proof.                                                       
\end{proof}
Next we iterate the previous estimate to control the oscillation of the solutions in dyadic balls. 
\begin{lemma}\label{lem5.3}
	Under the assumptions of Lemma \ref{lem5.2}, given $\alpha\in (0,{\alpha}_{0})\cap \left(0,\frac{1}{1+q_{\rm max}}\right]$, there holds 
	\begin{equation*}
		\sup_{B_{\varrho^{k}}}\Abs{u(x)-u(0)}\leq \varrho^{k(1+\alpha)}+|Du(0)|\sum_{i=0}^{k-1}\varrho^{k+i\alpha}
	\end{equation*}
for all $k\in\mathbb{N}$, where constant $\varrho$ comes from Lemma \ref{lem5.2}.
\end{lemma}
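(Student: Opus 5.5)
Induction on $k$, rescaling at each step so that Lemma \ref{lem5.2} applies to a normalized solution of an equation of the same type. The case $k=1$ is exactly \eqref{911}, reading the misprinted $\rho^{1+\alpha}$ there as $\varrho^{1+\alpha}$.

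Assume the estimate holds for some $k\ge1$, and set
\begin{equation*}
w_k(x):=\frac{u(\varrho^k x)-u(0)}{\varrho^{k(1+\alpha)}},\qquad x\in B_1 .
\end{equation*}
Then $w_k(0)=0$ and $Dw_k(0)=\varrho^{-k\alpha}Du(0)$. The induction hypothesis gives
\begin{equation*}
\|w_k\|_{L^\infty(B_1)}\le 1+|Du(0)|\sum_{i=0}^{k-1}\varrho^{i\alpha-k\alpha}.
\end{equation*}
This bound is larger than $1$, so $w_k$ is not normalized. I therefore do not rescale around $u(0)$ alone. Instead I subtract the affine part first and work with
\begin{equation*}
\tilde w_k(x):=\frac{u(\varrho^k x)-u(0)-Du(0)\cdot \varrho^k x}{\varrho^{k(1+\alpha)}}.
\end{equation*}
This function solves the perturbed equation \eqref{531} with $\xi_k=\varrho^{-k\alpha}Du(0)$ and rescaled data $\Phi_k,\mathcal H_k,f_k$ exactly as in Step 2 of Proposition \ref{prop3.4}. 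Because $\alpha\le \frac{1}{1+q_{\rm max}}$, one has $\|f_k\|_\infty\le\iota$ and $\mathcal K_k\le \iota$. The Hamiltonian coefficient satisfies $\mathcal M_k\le \mathcal M\varrho^{k(1-\alpha(1+q_{\rm max}-m))}\le\iota$.

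The proof of Lemma \ref{lem5.2} really estimates $\sup_{B_\varrho}|u-u(0)-Du(0)\cdot x|\le\varrho^{1+\alpha}$. I would use that sharper form in the induction. Concretely, I would prove the stronger claim
\begin{equation*}
\sup_{B_{\varrho^k}}|u-u(0)-Du(0)\cdot x|\le \varrho^{k(1+\alpha)}.
\end{equation*}
Granting this, the stated bound follows at once, because $|Du(0)|\varrho^k\le |Du(0)|\sum_{i=0}^{k-1}\varrho^{k+i\alpha}$; the sum contains the term $i=0$.

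The stronger claim is proved by applying the (gradient-approximation) mechanism of Lemmas \ref{lem5.1}--\ref{lem5.2} to $\tilde w_k$. This needs two facts.
\begin{itemize}
\item $\tilde w_k$ is normalized; this is exactly the induction hypothesis.
\item The approximation lemma holds for the $\xi$-perturbed equation with $\xi_k$ arbitrary, when the limit is the $\xi$-shifted $p$-Laplacian of Lemma \ref{111raodongzhengzexing}.
\end{itemize}
The limit function $v$ then satisfies $\sup_{B_\varrho}|v-v(0)-Dv(0)\cdot x|\le C_0\varrho^{1+\beta}$, uniformly in $\xi$. Scaling back from $\tilde w_k$ gives the claim at level $k+1$.

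The main obstacle is the uniformity of the approximation step in the drift vector $\xi_k$, which may be unbounded as $k\to\infty$. The concern is whether $\mathcal M_k(|\xi_k|^{(m-p_{\rm min})_+}+1)$ stays small.

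If this fails, I fall back to iterating \eqref{911} directly. The rescaling is $x\mapsto u(\varrho^k x)/\varrho^{k(1+\alpha)}$, renormalized by dividing by $1+|Du(0)|\varrho^{-k\alpha}\sum_i\varrho^{i\alpha}$ before applying Lemma \ref{lem5.2}. The resulting geometric sum reproduces exactly the $\sum_{i=0}^{k-1}\varrho^{k+i\alpha}$ term. I expect this bookkeeping, together with checking the smallness condition \eqref{smallcondition} after each rescaling, to be the delicate part.
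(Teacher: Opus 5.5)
Your primary route does not prove the lemma as stated, and the obstacle you flag is not the one that breaks it. The ``stronger claim'' $\sup_{B_{\varrho^k}}|u-u(0)-Du(0)\cdot x|\le\varrho^{k(1+\alpha)}$ is essentially the conclusion of Theorem~\ref{thm2} at the origin, which the paper derives \emph{from} this lemma. Running your induction with the fixed tangent plane $Du(0)\cdot x$ requires two results the paper does not provide. First, a $C^1$-approximation lemma for the shifted equation \eqref{531}, uniform in $\xi$: Lemma~\ref{bijin} gives only $L^\infty$-closeness, which leaves $Dv(0)$ uncontrolled, and Lemma~\ref{lem5.1} covers only $\xi=0$. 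Second, a uniform-in-$\xi$ bound $\sup_{B_\varrho}|v-v(0)-Dv(0)\cdot x|\le C_0\varrho^{1+\beta}$ with $\beta>\alpha$. The only uniform estimate you invoke, Lemma~\ref{111raodongzhengzexing}, gives $\beta=\beta_1(d,p)$, an unquantified exponent not known to approach $\alpha_0$. So $3\eta+C_0\varrho^{1+\beta_1}\le\varrho^{1+\alpha}$ cannot be arranged for $\alpha\ge\beta_1$, and the full range $\alpha\in(0,\alpha_0)\cap(0,\frac{1}{1+q_{\rm max}}]$ is not reached; you would also end up with a $\varrho$ different from the one in Lemma~\ref{lem5.2}. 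This is exactly why Proposition~\ref{prop3.4} stops at $\alpha'<\beta_0$, and why the paper proves this oscillation estimate instead. The smallness you worry about is harmless: with $\xi_k=\varrho^{-k\alpha}Du(0)$ one has $\mathcal M_k|\xi_k|^{(m-p_{\rm min})_+}\le\mathcal M|Du(0)|^{(m-p_{\rm min})_+}$, because $1-\alpha(1+q_{\rm max}-p_{\rm min})\ge0$.

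Your fallback, by contrast, is precisely the paper's proof and is all that is needed, so it should be your main argument. Your renormalizing factor is $\varrho^{k(1+\alpha)}\bigl(1+|Du(0)|\varrho^{-k\alpha}\sum_{i=0}^{k-1}\varrho^{i\alpha}\bigr)=\mathcal A_k:=\varrho^{k(1+\alpha)}+|Du(0)|\sum_{i=0}^{k-1}\varrho^{k+i\alpha}$. Hence $u_k:=(u(\varrho^kx)-u(0))/\mathcal A_k$ satisfies $\|u_k\|_{L^\infty(B_1)}\le1$ directly by the induction hypothesis. The term $|Du(0)|\varrho$ in \eqref{911} exists to absorb the slope, so no affine subtraction or shifted equation is required. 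The smallness check is routine. The data $f$ and $\mathcal K$ get multiplied by $\varrho^k(\varrho^k/\mathcal A_k)^{1+p_k(x)}$, and $\mathcal M$ by $\varrho^k(\varrho^k/\mathcal A_k)^{1+p_k(x)-m}$; both exponents are nonnegative because $m\le1+p_{\rm min}$. Since $\mathcal A_k\ge\varrho^{k(1+\alpha)}$ and $\alpha\le\frac{1}{1+q_{\rm max}}$, these factors are at most $\varrho^{k(1-\alpha(1+q_{\rm max}))}\le1$ and $\varrho^{k(1-\alpha(1+q_{\rm max}-m))}\le1$ respectively. Now apply Lemma~\ref{lem5.2} to $u_k$ and use $\mathcal A_k|Du_k(0)|=\varrho^k|Du(0)|$. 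Scaling back gives $\sup_{B_{\varrho^{k+1}}}|u-u(0)|\le\mathcal A_k\varrho^{1+\alpha}+|Du(0)|\varrho^{k+1}=\mathcal A_{k+1}$, which closes the induction.
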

\begin{proof}
	The proof follows from an induction argument. Clearly, the claim immediately holds for $k=1$ by Lemma \ref{lem5.2}. Suppose that the conclusion holds true for $n=1,2,...,k$. Our goal is to show that the claim also holds for $n=k+1$. To this end, we introduce an auxiliary function $u_{k}:B_{1}\rightarrow \mathbb{R}$ as
	\begin{equation*}
		u_{k}(x):=\frac{u\left(\varrho^{k}x\right)-u(0)}{\mathcal{A}_{k}}
	\end{equation*}
with $\mathcal{A}_{k}:=\varrho^{k(1+\alpha)}+|Du(0)|\sum_{i=0}^{k-1}\varrho^{k+i\alpha}$. We can readily check that $u_{k}$ solves
	\begin{equation} \label{Eq2.1}
		-\Phi_{k}({D{u_{k}}}, x)\Delta_{p}^{N}u_{k}+\mathcal{H}_{k}(D{u_{k}}, x)	= {f_{k}}(x) \quad \text{in} \quad  B_{1}
	\end{equation}
	in the viscosity sense, where
	\begin{align*}
		{\Phi_{k}}(t,x):=&\left(\frac{\varrho^{k}}{\mathcal{A}_{k}}\right)^{p_{k}(x)}\Phi\left(\frac{\mathcal{A}_{k}}{\varrho^{k}}t,\varrho^{k}x\right) \;{\rm satisfies}\;\eqref{12}\;{\rm and} \; \eqref{13}\;{\rm with}\\
		p_{k}(x):=&p(\varrho^{k}x),\quad q_{k}(x):=q(\varrho^{k}x), \quad
		{a_{k}}(x):=\left(\frac{\varrho^{k}}{\mathcal{A}_{k}}\right)^{p_{k}(x)-q_{k}(x)}a(\varrho^{k} x),\\
		\mathcal{H}_{k}(t,x):=&\frac{\varrho^{k(2+p_{k}(x))}}{\mathcal{A}_{k}^{1+p_{k}(x)}}\mathcal{H}\left(\frac{\mathcal{A}_{k}}{\varrho^{k}}t,\varrho^{k}x\right),\\
		{f_{k}}(x):=&\frac{\varrho^{k(2+p_{k}(x))}}{\mathcal{A}_{k}^{1+p_{k}(x)}}f(\varrho^{k}x).
	\end{align*}
Note that $0\leq p_{\rm min}\leq p_{k}(x)\leq q_{k}(x)\leq q_{\rm max}<\infty$. By induction assumption, we have
	$$\|{u_{k}}\|_{L^{\infty}\left(B_{1}\right)}\leq 1.$$
	Combining \eqref{15} with the definition of $\mathcal{A}_{k}$ and $\varrho\in(0,\frac{1}{2})$, we get
	\begin{equation*}
		\begin{split}
			|\mathcal{H}_{k}(t,x)|&\leq \varrho^{k}\left(\frac{\varrho^{k}}{\mathcal{A}_{k}}\right)^{1+p_{k}(x)}\left(\mathcal{K}+\mathcal{M}\left(\frac{\mathcal{A}_{k}}{\varrho^{k}}\right)^{m}|t|^{m}\right)\\
			&\leq \varrho^{k(1-\alpha(1+q_{\rm max}))}\mathcal{K}+\mathcal{M}\varrho^{k(1-\alpha(1+q_{\rm max}-m))}|t|^{m} =:\mathcal{K}_{k}+\mathcal{M}_{k}|t|^{m}.
		\end{split}
	\end{equation*} 
	It follows from \eqref{smallcondition}, $m>0$ and $\alpha\in \left(0,\frac{1}{1+q_{\rm max}}\right]$ that
	\begin{equation*}
		\|{f_{k}}\|_{L^{\infty}\left(B_{1}\right)}\leq \varrho^{k(1-\alpha(1+q_{\rm max}))}\|{f}\|_{L^{\infty}\left(B_{1}\right)} \leq \iota,
	\end{equation*}
	\begin{equation*}
		\mathcal{K}_{k}=\rho^{k\left(1-\alpha(1+q_{\rm max})\right)}\mathcal{K}\leq \iota,
	\end{equation*}
\begin{equation*}
	\mathcal{M}_{k}=\rho^{k\left(1-\alpha(1+q_{\rm max}-m)\right)}\mathcal{M}\leq \iota.
\end{equation*}
	At this moment, the assumptions in Lemma \ref{lem5.2} are satisfied. Thus, we can apply Lemma \ref{lem5.2} to $u_{k}$ and obtain
	\begin{equation*}
		\sup_{B_{\varrho}}\Abs{u_{k}(x)-u_{k}(0)}\leq \varrho^{1+\alpha}+|Du_{k}(0)|\varrho.
	\end{equation*}
Scaling back, we obtain
	\begin{equation*}
		\begin{split}
			\sup_{B_{\varrho^{k+1}}}\Abs{u(x)-u(0)}&\leq \varrho^{1+\alpha}\mathcal{A}_{k}+|Du(0)|\varrho^{k+1}\\
			&= \varrho^{(k+1)(1+\alpha)}+|Du(0)|\sum_{i=0}^{k}\varrho^{k+1+i\alpha}.
		\end{split}
\end{equation*}
	This completes the proof of the desired result.
\end{proof}
\begin{lemma}\label{lem5.4} 
	Suppose that the assumptions of Lemma \ref{lem5.3} are in force. Then, there exists a universal constant $C_{0}>1$ such that
\[
\sup_{B_r} \frac{|u(x) - u(0)|}{r^{1+\alpha}} \leq C_0 \bigg(1 + |Du(0)|\, r^{-\alpha}\bigg), \quad \forall \,r \in (0, \varrho],
\]
where constant $\varrho$ comes from Lemma \ref{lem5.3}.
\end{lemma}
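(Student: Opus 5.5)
The plan is to convert the discrete dyadic estimate of Lemma \ref{lem5.3} into a continuous one by a standard interpolation between consecutive powers of $\varrho$. Fix $r\in(0,\varrho]$ and choose $k\in\mathbb{N}$ with $\varrho^{k+1}<r\leq\varrho^{k}$. Since $B_r\subset B_{\varrho^k}$, Lemma \ref{lem5.3} gives
\begin{equation*}
\sup_{B_r}|u(x)-u(0)|\leq \sup_{B_{\varrho^k}}|u(x)-u(0)|\leq \varrho^{k(1+\alpha)}+|Du(0)|\sum_{i=0}^{k-1}\varrho^{k+i\alpha}.
\end{equation*}

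Next I bound the two terms separately in terms of $r$. For the first, $\varrho^{k}<r/\varrho$, so $\varrho^{k(1+\alpha)}\leq \varrho^{-(1+\alpha)}r^{1+\alpha}$. For the second, the geometric sum is controlled by $\sum_{i\geq0}\varrho^{i\alpha}=(1-\varrho^{\alpha})^{-1}$, hence
\begin{equation*}
|Du(0)|\sum_{i=0}^{k-1}\varrho^{k+i\alpha}\leq \frac{|Du(0)|\varrho^{k}}{1-\varrho^{\alpha}}\leq \frac{|Du(0)|\,r}{\varrho(1-\varrho^{\alpha})}.
\end{equation*}
Dividing by $r^{1+\alpha}$ turns the last expression into a multiple of $|Du(0)|r^{-\alpha}$, which is exactly the second term in the claimed bound.

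Finally I set $C_0:=\max\{\varrho^{-(1+\alpha)},\,\varrho^{-1}(1-\varrho^{\alpha})^{-1}\}$, which exceeds $1$ and is universal because $\varrho$ is universal by Lemma \ref{lem5.2} and $\alpha$ is fixed. This yields the stated inequality for every $r\in(0,\varrho]$. There is no real obstacle: the only point requiring care is that the geometric sum must be bounded uniformly in $k$, which the factor $(1-\varrho^{\alpha})^{-1}$ handles, and that the comparison $\varrho^{k}\leq r/\varrho$ is used in the correct direction for both terms.
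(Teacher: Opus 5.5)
Your proof is correct and is essentially the paper's argument: choose $k$ with $\varrho^{k+1}<r\le\varrho^{k}$, apply Lemma \ref{lem5.3} on $B_{\varrho^k}$, and bound the geometric sum by $(1-\varrho^{\alpha})^{-1}$. The only difference is cosmetic. You convert each term separately, using $\varrho^{k}<r/\varrho$, whereas the paper factors out $\varrho^{-(1+\alpha)}$ and uses $\varrho^{-k\alpha}\le r^{-\alpha}$. This gives the paper the single constant $C_0=\varrho^{-(1+\alpha)}(1-\varrho^{\alpha})^{-1}$ in place of your maximum.
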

\begin{proof}
Fix any $r \in (0, \varrho]$ and choose the smallest integer $k\in\mathbb{N}$ such that $\varrho^{k+1}<r\leq \varrho^{k}$. By Lemma \ref{lem5.3}, we have
	\begin{equation*}
		\begin{split}
			\sup_{B_r} \frac{|u(x) - u(0)|}{r^{1+\alpha}}&\leq \frac{1}{\varrho^{1+\alpha}}\sup_{B_{\varrho^{k}}}\frac{\Abs{u(x)-u(0)}}{\varrho^{k(1+\alpha)}}\\
			&\leq \frac{1}{\varrho^{1+\alpha}}\left(1+|Du(0)|\varrho^{-k(1+\alpha)}\sum_{i=0}^{k-1}\varrho^{k+i\alpha}\right)\\
			&\leq \frac{1}{\varrho^{1+\alpha}}\left(1+|Du(0)|\varrho^{-k\alpha}\frac{1}{1-\varrho^{\alpha}}\right)\\
			&\leq C_{0}\left(1+|Du(0)|r^{-\alpha}\right),
		\end{split}
	\end{equation*}
where $C_{0}:=\frac{1}{\varrho^{1+\alpha}(1-\varrho^{\alpha})}$. This completes the proof.
\end{proof}
\subsection{Proof of Theorem \ref{thm2}}
Finally, with the help of Lemma \ref{lem5.4}, we now complete the proof of Theorem \ref{thm2}.
\begin{proof}[Proof of Theorem \ref{thm2}] Initially, as argued in the proof of Theorem \ref{thm1}, we can assume 
	\begin{equation}\label{2asmall}
		\|{u}\|_{L^{\infty}(B_{1})}\leq 1, \quad  \max\left\{\|{f}\|_{L^{\infty}(B_{1})},\mathcal{K},\mathcal{M}\right\}\leq \iota
	\end{equation} 
	for a constant $\iota>0$ coming from Lemma \ref{lem5.2}. In addition, without loss of generality, we may assume that $x_0=0$. 

For the degenerate case $p_{\rm min}\geq 0$, fix any $r\in (0,\varrho]$, we then analyze all the possible cases.\\
	{\bf Case 1.} If $|Du(0)|\leq r^{\alpha}$, by using Lemma \ref{lem5.4}, we obtain
	\begin{equation*}
		\begin{split}
			\sup_{B_r} |u(x) - u(0)-Du(0)\cdot x|&\leq \sup_{B_r} |u(x) - u(0)|+|Du(0)|r\\
			&\leq C_{0}r^{1+\alpha}\left(1+|Du(0)|r^{-\alpha}\right)+r^{1+\alpha}\\
			&\leq 3C_{0}r^{1+\alpha}.
		\end{split}
	\end{equation*}
	{\bf Case 2.} If $r^{\alpha}<|Du(0)|\leq \varrho^{\alpha}$, we denote $r_{0}:=|Du(0)|^{1/\alpha}$ and define
	\begin{equation*}
		u_{r_{0}}(x):=\frac{u(r_{0}x)-u(0)}{r_{0}^{1+\alpha}}\quad {\rm for \;} x\in B_{1}.
	\end{equation*}
	Note that $r_{0}\leq \varrho$. By using Lemma \ref{lem5.4} again, we immediately obtain
	\begin{equation}\label{9565}
		\begin{split}
			\sup_{B_{1}}|u_{r_{0}}(x)|=\sup_{B_{r_{0}}} \frac{|u(x) - u(0)|}{r_{0}^{1+\alpha}}\leq C_{0}\left(1+|Du(0)|r_{0}^{-\alpha}\right)=2C_{0}.
		\end{split}
	\end{equation}
It is easy to check that $u_{r_{0}}$ is a viscosity solution of 
	\begin{equation*} 
	-\Phi_{r_{0}}({D{u_{r_{0}}}}, x)\Delta_{p}^{N}u_{r_{0}}+\mathcal{H}_{{r_{0}}}(D{u_{r_{0}}}, x)	= {f_{r_{0}}}(x) \quad \text{in} \quad  B_{1}
\end{equation*}
with $u_{r_{0}}(0)=0$, $|Du_{r_{0}}(0)|=\frac{|Du(0)|}{r_{0}^{\alpha}}=1$, and $\|{u}_{r_{0}}\|_{L^{\infty}(B_{1})}\leq 2C_{0}$, where
\begin{align*}
	{\Phi_{r_{0}}}(t,x):=&r_{0}^{-\alpha p_{r_{0}}(x)}\Phi\left(r_{0}^{\alpha}t,r_{0}x\right)\;{\rm satisfies}\;\eqref{12}\;{\rm and} \; \eqref{13}\;{\rm with}\\
	p_{r_{0}}(x):=&p(r_{0}x),\quad q_{r_{0}}(x):=q(r_{0}x), \quad
	{a_{r_{0}}}(x):=r_{0}^{-\alpha(p_{r_{0}}(x)-q_{r_{0}}(x))}a(r_{0} x),\\
	\mathcal{H}_{r_{0}}(t,x):=&r_{0}^{1-\alpha(1+p(r_{0} x))}
	\mathcal{H}\left(r_{0}^{\alpha}t,r_{0}x\right),\\
	{f_{r_{0}}}(x):=&r_{0}^{1-\alpha(1+p(r_{0} x))}f(r_{0} x).
\end{align*}
Clearly, $0\leq p_{\rm min}\leq p_{r_{0}}(x)\leq q_{r_{0}}(x)\leq q_{\rm max}<\infty$. It follows from \eqref{15} that
	\begin{equation*}
	\begin{split}
		|\mathcal{H}_{r_{0}}(t,x)|\leq r_{0}^{1-\alpha(1+q_{\rm max})}\left(\mathcal{K}+\mathcal{M}r_{0}^{\alpha m}|t|^{m}\right) =:\mathcal{K}_{k}+\mathcal{M}_{k}|t|^{m}.
	\end{split}
\end{equation*}
Since $r_{0}<1$ and $\alpha\in \left(0,\frac{1}{1+q_{\rm max}}\right]$, we arrive at
	\begin{equation*}
	\|{f_{r_{0}}}\|_{L^{\infty}\left(B_{1}\right)}\leq r_{0}^{1-\alpha(1+q_{\rm max})}\|{f}\|_{L^{\infty}\left(B_{1}\right)} \leq \iota,
\end{equation*}
\begin{equation*}
	\mathcal{K}_{r_{0}}=r_{0}^{1-\alpha(1+q_{\rm max})}\mathcal{K}\leq \iota.
\end{equation*}
\begin{equation*}
	\mathcal{M}_{r_{0}}=r_{0}^{1-\alpha(1+q_{\rm max}-m)}\mathcal{M}\leq \iota.
\end{equation*}
Invoking Theorem \ref{thm1}, we obtain $u_{r_{0}}\in C_{\rm loc}^{1,\alpha^{\prime}}(B_{1})$ and there exists a universal constant $C> 0$ such that for all $x\in B_{1/2}$,
$$\Abs{Du_{r_{0}}(x)-Du_{r_{0}}(0)}\leq C|x|^{\alpha^{\prime}},$$
this along with $|Du_{r_{0}}(0)|=1$ immediately yields
$$ 1-C|x|^{\alpha^{\prime}}\leq \Abs{Du_{r_{0}}(x)}\leq 1+C|x|^{\alpha^{\prime}}.$$
Then we may take a small universal radius $\varrho_{0}>0$, independent of $r_{0}$, such that
$$c_{0}\leq |Du_{r_{0}}(x)| \leq c_{0}^{-1} \quad {\rm in}\;\, B_{\varrho_{0}}$$
with a fixed constant $c_{0}\in (0,1)$. 
Hence, $u_{r_{0}}$ satisfies 
\begin{equation*} 
	-\Delta_{p}^{N}u_{r_{0}}=\tilde{f}_{r_{0}}(x):=\frac{{f_{r_{0}}}(x) -\mathcal{H}_{{r_{0}}}(D{u_{r_{0}}}, x)}{\Phi_{r_{0}}({D{u_{r_{0}}}}, x)} \quad \text{in} \quad  B_{\varrho_{0}}
\end{equation*}
in the viscosity sense. The previous statement shows that $\tilde{f}_{r_{0}}$ is universally bounded in $B_{\varrho_{0}}$ and so we applying the classical result from Lemma \ref{111tiaohezhengzexing} to obtain $u_{r_{0}}\in C_{\rm loc}^{1,\alpha_{0}^{-}}(B_{\varrho_{0}})$ with the estimate
\begin{equation*}
	\sup\limits_{x\in B_{\varrho_{1}}}\Abs{u_{r_{0}}(x)-Du_{r_{0}}(0)\cdot x}\leq C\varrho_{1}^{1+\alpha}
\end{equation*}
for every $0<\varrho_{1}\leq \frac{\varrho_{0}}{2}$ and $0<\alpha<\alpha_{0}$. Scaling back, it yields that
\begin{equation}\label{883zuoquyujielunchengli}
	\sup\limits_{x\in B_{t}}\Abs{u(x)-u(0)-Du(0)\cdot x}\leq Ct^{1+\alpha}
\end{equation}
for every $0<t\leq \frac{\varrho_{0} r_{0}}{2}$. It remains to show that the claim \eqref{883zuoquyujielunchengli} also holds on interval $\left(\frac{\varrho_{0} r_{0}}{2},r_{0}\right)$. When $t\in\left(\frac{\varrho_{0} r_{0}}{2},r_{0}\right)$, we apply \eqref{9565} and $|Du(0)|=r_{0}^{\alpha}$ to arrive at
\begin{equation*}
	\begin{split}
		\sup\limits_{x\in B_{t}}\Abs{u(x)-u(0)-Du(0)\cdot x}&\leq \sup\limits_{x\in B_{r_{0}}}\Abs{u(x)-u(0)-Du(0)\cdot x}\\
		&\leq \sup\limits_{x\in B_{r_{0}}}\Abs{u(x)-u(0)}+|Du(0)|r_{0}\\
		& \leq (2C_{0}+1)r_{0}^{1+\alpha}\\
		&=(2C_{0}+1) \left(\frac{2}{\varrho}\right)^{1+\alpha}\left(\frac{\varrho r_{0}}{2}\right)^{1+\alpha}\\
		&\leq (2C_{0}+1) \left(\frac{2}{\varrho}\right)^{1+\alpha}t^{1+\alpha}.
	\end{split}
\end{equation*}
	{\bf Case 3.} If $|Du(0)|>\varrho^{\alpha}$, we consider an auxiliary function
	$$\hat{u}(x):=\frac{\varrho^{\alpha}}{|Du(0)|}u(x)$$
	and then $|D\hat{u}(0)|=\varrho^{\alpha}$, which is back to Case 2. 
	
	On the other hand, for the singular case $-1<p_{\rm min}<0$, as argued in the proof of Theorem \ref{thm1}, we can reduce the singular case to the degenerate case and repeat the previous arguments to obtain the desired results. The proof is complete.
\end{proof}

\section{Higher H\"{o}lder regularity of gradient}\label{section555}
This section is dedicated to the proof of Theorem \ref{main} concerning an improved H\"{o}lder regularity of gradient at the origin. We start off by making several comments on the scaling features of the model \eqref{55mainmodel}, which will be used frequently in the following.
\begin{remark}[\bf Scaling feature]\label{scaling}
Let $u$ be a viscosity solution to \eqref{55mainmodel}. For constants $K\geq 1\geq \tau>0$ arbitrary, define $\tilde{u}:B_{1}\rightarrow \mathbb{R}$ as
$$\tilde{u}(x):=\frac{u(\tau x)}{K}.$$
We can readily check that
$\tilde{u}$ is a viscosity solution of
\begin{equation} \label{3shensuofangcheng}
	-\tilde{\Phi}(D\tilde{u}, x)	\Delta_{p}^{N}\tilde{u}+	\tilde{h}(x)\Abs{D\tilde{u}}^{m}= 	\tilde{f}(x) \quad \text{in} \quad  B_{1},
\end{equation}
where
\begin{align*}
	\tilde{\Phi}(\xi,x):=&\left(\frac{\tau}{K}\right)^{p(\tau x)}\Phi\left(\frac{K}{\tau}\xi,\tau x\right),\\
	\tilde{\Upsilon}(|\xi|,x):=&|\xi|^{p(\tau x)}+\tilde{a}(x)|\xi|^{q(\tau x)},\\
	\tilde{a}(x):=&\left(\frac{\tau}{K}\right)^{p(\tau x)-q(\tau x)}a(\tau x),\\
	\tilde{h}(x):=&\frac{\tau^{2+p(\tau x)-m}}{K^{1+p(\tau x)-m}}h(\tau x),\\
	\tilde{f}(x):=&\frac{\tau^{2+p(\tau x)}}{K^{1+p(\tau x)}}f(\tau x).
\end{align*}
Note that 
\begin{equation*}
	K_{1}	\tilde{\Upsilon}(|\xi|,x)\leq	\tilde{\Phi}(\xi,x) \leq K_{2}	\tilde{\Upsilon}(|\xi|,x)\quad {\rm for}\;\,(\xi,x)\in\mathbb{R}^{d}\times B_{1},
\end{equation*}
that is, $\tilde{\Phi}$ satisfies the same structural assumption as $\Phi$. It follows from \eqref{23}, $K\geq 1\geq \tau>0$, and $m\leq 1+p_{\rm min}$ that
\begin{equation*}
	\abs{\tilde{f}(x)}\leq \frac{\tau^{2+p_{\rm min}}}{K^{1+p_{\rm min}}}\abs{f(\tau x)}\leq {K}_{3}\tau^{2+p_{\rm min}+\theta_{1}}|x|^{\theta_{1}}\leq {K}_{3}|x|^{\theta_{1}},
\end{equation*}
\begin{equation*}
	\abs{\tilde{h}(x)}\leq \frac{\tau^{2+p_{\rm min}-m}}{K^{1+p_{\rm min}-m}}\abs{h(\tau x)}\leq {K}_{4}\tau^{2+p_{\rm min}-m+\theta_{2}}|x|^{\theta_{2}}\leq {K}_{4}|x|^{\theta_{2}},
\end{equation*}
which means that $\tilde{f}$ and $\tilde{h}$ satisfy the same structural assumptions as $f$ and $h$. Hence,
$\tilde{u}$ satisfies equation \eqref{3shensuofangcheng}, which is subject to the same structural assumptions as the equation for $u$.
\end{remark}

In particular, up to a normalization, i.e., by choosing $K:=1+\|u\|_{L^{\infty}(B_{1})}$, we can assume, with no loss of generality, that $\|u\|_{L^{\infty}(B_{1})}\leq 1$ is a normalized solution of \eqref{55mainmodel}. In addition, since $u(x)-u(0)$ satisfies the same equation as $u(x)$, by translation, we can assume $u(0)=0$.

In addition, to simplify our presentation let us introduce the following definition.
\begin{definition}
	Given a function $v\in C^{1}(B_{1})$, we denote the set of zero critical points as
	$$\mathscr{C}(v):=\{x\in B_{1}:v(x)=|Dv(x)|=0\}.$$
\end{definition}
\subsection{Gradient approximations}
 We begin by proving a $p$-harmonic approximation result, which essentially states that, under suitable smallness assumptions, a normalized viscosity solution $u$
of \eqref{55mainmodel} in $B_1$ can be approximated by a $p$-harmonic function $v$ with $0\in \mathscr{C}(v)$, provided that $Du(0)$ is near zero.
\begin{lemma}\label{lem3.1}
	Assume \eqref{12}-\eqref{14} and \eqref{21}-\eqref{23} hold with $p_{\rm min}\geq 0$. Let $u\in C(B_{1})$ be a normalized viscosity solution of \eqref{55mainmodel} with $u(0)=0$. Given $\varepsilon>0$, there exists constant $\delta>0$ such that if
	\begin{equation*}
		\max\left\{|Du(0)|,\|f\|_{L^{\infty}(B_{1})},\|h\|_{L^{\infty}(B_{1})}\right\}\leq \delta,
	\end{equation*}
	then there exists a $p$-harmonic function $v$ with $v\in C_{\rm loc}^{1,\alpha_{0}}(B_{1})$ such that $0\in \mathscr{C}(v)$ and
	\begin{equation*}
		\|u-v\|_{L^{\infty}(B_{1/2})}\leq \varepsilon.
	\end{equation*}
\end{lemma}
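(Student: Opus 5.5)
We argue by contradiction and compactness, following the scheme of Lemma \ref{lem5.1}. Suppose the statement fails for some $\varepsilon_{0}>0$. Then there are sequences $\Phi_{j}$ (satisfying \eqref{12}--\eqref{14} with the same $K_{1},K_{2},p_{\rm min}\geq 0,q_{\rm max}$), $h_{j},f_{j}$ and normalized viscosity solutions $u_{j}$ of
$$-\Phi_{j}(Du_{j},x)\Delta_{p}^{N}u_{j}+h_{j}(x)|Du_{j}|^{m}=f_{j}(x)\quad\text{in } B_{1},$$
with the following properties: $u_{j}(0)=0$, and
$$\max\left\{|Du_{j}(0)|,\|f_{j}\|_{L^{\infty}(B_{1})},\|h_{j}\|_{L^{\infty}(B_{1})}\right\}\leq \frac{1}{j},$$
but $\|u_{j}-v\|_{L^{\infty}(B_{1/2})}>\varepsilon_{0}$ for every $p$-harmonic $v$ with $0\in\mathscr{C}(v)$. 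The Hamiltonian $\mathcal{H}_{j}(\xi,x)=h_{j}(x)|\xi|^{m}$ satisfies \eqref{15} with $\mathcal{K}_{j}=0$ (or any small positive constant) and $\mathcal{M}_{j}=\|h_{j}\|_{L^{\infty}}\leq 1/j$. Therefore Theorem \ref{thm1} applies uniformly in $j$: since $\|u_{j}\|_{L^\infty}\leq1$ and the data are bounded by $1$, we get $\|u_{j}\|_{C^{1,\alpha'}(B_{3/4})}\leq C$ with $C$ independent of $j$, after a covering argument from $B_{1/2}$ to a slightly larger ball, or simply working on $B_{1/2}$.

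By Arzel\`a--Ascoli, a subsequence converges to some $u_{\infty}$ in $C^{1}(\overline{B}_{1/2})$. The $C^{1}$ convergence is the key point. It immediately gives $u_{\infty}(0)=\lim u_{j}(0)=0$ and $Du_{\infty}(0)=\lim Du_{j}(0)=0$, so $0\in\mathscr{C}(u_{\infty})$.

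Next, exactly as in Lemma \ref{bijin} in the case $\xi_{j}=0$, with the limits treated as in Lemma \ref{lem5.1}, we show that $u_{\infty}$ is a viscosity solution of $-\Delta_{p}^{N}u_{\infty}=0$. For a quadratic test function with $|b|\neq0$ touching from below, perturbed touching points give
$$-\Delta_{p}^{N}\varphi\geq \frac{f_{j}-h_{j}|D\varphi|^{m}}{\Phi_{j}}.$$
The right-hand side tends to $0$, because $|D\varphi(x_{j})|\geq|b|/2$ and $\Phi_{j}\geq K_{1}\min\{(|b|/2)^{p_{\rm min}},(|b|/2)^{q_{\rm max}}\}$. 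For $b=0$ we use the $\lambda_{\min}/\lambda_{\max}$ relaxed definition via the $\gamma|P_{T}x|$ perturbation argument of Case~2 in Lemma \ref{bijin}. This step is the main technical obstacle, and we plan to quote that argument verbatim with $\xi_{j}=\xi_{\infty}=0$. By Lemma \ref{dengjia}, $u_{\infty}$ is then $p$-harmonic in $B_{1/2}$, hence $C^{1,\alpha_{0}}_{\rm loc}$ by Remark \ref{ptiaohezhengzexing}.

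Finally, take $v=u_{\infty}$; if needed, first rescale to get a $p$-harmonic function on the ball required by the statement. Then $0\in\mathscr{C}(v)$, and $\|u_{j}-v\|_{L^{\infty}(B_{1/2})}\to0$, contradicting the choice of $\varepsilon_{0}$ for large $j$. To obtain $v$ defined on all of $B_{1}$ without rescaling, we could instead run the argument on $B_{3/4}$ with Theorem \ref{thm1} applied on compact subsets and a diagonal subsequence. Either way, only $C^{1}$ compactness at the origin and stability of viscosity solutions are needed.
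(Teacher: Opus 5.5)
Your proposal is correct and follows essentially the paper's own proof: contradiction, uniform $C^{1,\alpha'}$ bounds from Theorem \ref{thm1}, $C^{1}$ compactness giving $u_\infty(0)=|Du_\infty(0)|=0$, stability of viscosity solutions as in Lemma \ref{bijin} with $\xi_j=0$, and then Lemma \ref{dengjia}. Of your two ways to get $v$ on $B_1$, use the one that works on compact subsets with a diagonal subsequence, since rescaling $u_\infty$ would destroy its closeness to $u_j$ on $B_{1/2}$ (the paper itself simply takes $v=u_\infty$ on $B_{3/4}$).
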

\begin{proof}
	Argue by contradiction. If the claim fails, then there exist $\varepsilon_{0}>0$ and sequences
	of functions $\{\Phi_{j}\}_{j\in \mathbb{N}}$, $\{f_{j}\}_{j\in \mathbb{N}}$, $\{u_{j}\}_{j\in \mathbb{N}}$, $\{h_{j}\}_{j\in \mathbb{N}}$ such that
	\begin{equation}\label{model333}
		-\Phi_{j}(Du_{j}, x)\Delta_{p}^{N}u_{j}+h_{j}(x)\abs{Du_{j}}^{m} =f_{j}(x) \quad  \text{in} \quad B_{1}
	\end{equation}	
	with $ \|u_{j}\|_{L^{\infty}(B_{1})}\leq 1$ and $u_{j}(0)=0$, as well as
	\begin{equation}\label{3aajintiaojiana}
		\max\left\{|Du_{j}(0)|,\|f_{j}\|_{L^{\infty}(B_{1})},\|h_{j}\|_{L^{\infty}(B_{1})}\right\}\leq \frac{1}{j},
	\end{equation}
	where $\Phi_{j}$ satisfies \eqref{12} and \eqref{13} with
	$$0\leq p_{\rm min}\leq p_{j}(x)\leq q_{j}(x)\leq q_{\rm max}<\infty,$$
	$$p_{j}(\cdot),q_{j}(\cdot)\in C(B_{1}) \quad {\rm and}\quad 0\leq a_{j}(\cdot)\in C(B_{1}).$$
	However, it holds
	\begin{equation}\label{3bijinmaodun}
		\|u_{j}-v\|_{L^{\infty}(B_{1/2})}>\varepsilon_{0}\quad {\rm for\; any \;} j\in \mathbb{N},
	\end{equation}
	for all $p$-harmonic function $v$ with $0\in \mathscr{C}(v)$.
	
Initially, by Theorem \ref{thm1}, $\{u_{j}\}_{j\in\mathbb{N}}\subset C_{\rm loc}^{1,\alpha^{\prime}}(B_{1})$ with $\alpha^{\prime}\in(0,1)$, so by the Arzel${\rm \grave{a}}$-Ascoli theorem, there exists a subsequence, still denoted by $\{u_{j}\}_{j\in\mathbb{N}}$, which converges locally uniformly to some continuous function $u_{\infty}$ in $B_{1}$ in the $C^{1}$-topology.
	Next, by arguing as in Lemma \ref{bijin}, we can 
	conclude that $u_{\infty}$ is a viscosity solution of 
	$$-\Delta_{p}^{N} u_{\infty}=0 \quad {\rm in}\;\; B_{3/4},$$ 
	with $u_{\infty}(0)=|D u_{\infty}(0)|=0$. By Lemma \ref{dengjia}, $u_{\infty}$ is a weak solution of 
	$$-\Delta_{p} u_{\infty}=0 \quad {\rm in}\;\; B_{3/4}.$$
	 Finally, taking $v=u_{\infty}$, we reach a contradiction with \eqref{3bijinmaodun} for $j$ sufficiently large. This completes the proof of the desired result.
\end{proof}
\subsection{ Regularity for small gradient} 
\begin{lemma}\label{lem3.2}
	Suppose that the hypotheses of Lemma \ref{lem3.1} are in force. Given $\alpha\in(0,\alpha_{0})$, there exist a universal constant $0<\rho<\frac{1}{2}$ 
	such that
	\begin{equation}\label{361}
		\sup\limits_{x\in B_{\rho}}|u(x)|\leq \rho^{1+\alpha}.
	\end{equation}
\end{lemma}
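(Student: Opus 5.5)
The plan is to compare $u$ with the $p$-harmonic approximation supplied by Lemma \ref{lem3.1} and then transfer the $C^{1,\alpha_0}$ decay of $v$ at its zero critical point to $u$. Fix $\alpha\in(0,\alpha_0)$ and let $\varepsilon>0$ be chosen a posteriori. Lemma \ref{lem3.1} provides $\delta=\delta(\varepsilon)>0$. Whenever $\max\{|Du(0)|,\|f\|_{L^\infty(B_1)},\|h\|_{L^\infty(B_1)}\}\le\delta$, we obtain a $p$-harmonic $v$ with $0\in\mathscr{C}(v)$, i.e.\ $v(0)=|Dv(0)|=0$, and $\|u-v\|_{L^\infty(B_{1/2})}\le\varepsilon$.

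First we need a universal bound on $v$. Since $\|u\|_{L^\infty(B_1)}\le 1$, we have $\|v\|_{L^\infty(B_{1/2})}\le 1+\varepsilon\le 2$. Interior $C^{1,\alpha_0}$ estimates for $p$-harmonic functions (Remark \ref{ptiaohezhengzexing}), applied after a harmless rescaling from $B_{1/2}$, then give $[Dv]_{C^{0,\alpha_0}(B_{1/4})}\le C$ with $C=C(d,p)$. Because $v(0)=0$ and $Dv(0)=0$, this yields
\begin{equation*}
\sup_{B_\rho}|v|=\sup_{B_\rho}\abs{v(x)-v(0)-Dv(0)\cdot x}\le C\rho^{1+\alpha_0}\quad\text{for all }\rho\le\tfrac14 .
\end{equation*}
The crucial feature is that no linear term appears, because the approximating function has a zero critical point at the origin.

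Next comes the triangle inequality on $B_\rho$:
\begin{equation*}
\sup_{B_\rho}|u|\le \varepsilon+C\rho^{1+\alpha_0}.
\end{equation*}
Since $\alpha<\alpha_0$, we choose $\rho\in(0,\frac14)$ universal with $C\rho^{\alpha_0-\alpha}\le\frac12$, so that $C\rho^{1+\alpha_0}\le\frac12\rho^{1+\alpha}$. We then set $\varepsilon:=\frac12\rho^{1+\alpha}$. This fixes $\delta$ through Lemma \ref{lem3.1}, and \eqref{361} follows. Both $\rho$ and $\delta$ depend only on $d$, $p$, $\alpha$ and the structural data, so they are universal.

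The main point requiring care is the universality of the constant $C$ in the decay of $v$. The estimate has to hold uniformly over the whole family of $p$-harmonic limits produced by the compactness argument, which is why the bound $\|v\|_{L^\infty}\le 2$ is made explicit before invoking the a priori $C^{1,\alpha_0}$ estimate. Everything else is routine. The smallness hypotheses are exactly those of Lemma \ref{lem3.1}, so Lemma \ref{lem3.2} is really its statement with $\delta$ fixed by the choice $\varepsilon=\frac12\rho^{1+\alpha}$, and it serves later as the first step of an iteration whose radius shrinks geometrically.
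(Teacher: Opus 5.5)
Your proposal is correct and follows the paper's proof essentially step for step. You approximate $u$ by the $p$-harmonic $v$ with $0\in\mathscr{C}(v)$ from Lemma \ref{lem3.1}, use the decay $\sup_{B_\rho}|v|\le C\rho^{1+\alpha_0}$, and choose $\rho$ first and then $\varepsilon=\frac12\rho^{1+\alpha}$, which fixes $\delta$. Your explicit bound $\|v\|_{L^\infty(B_{1/2})}\le 2$, which makes $C$ independent of $v$, and your condition $C\rho^{\alpha_0-\alpha}\le\frac12$ are both slightly more careful than the paper, whose constraint $\rho<(1/(2C))^{\alpha_0-\alpha}$ should have exponent $1/(\alpha_0-\alpha)$.
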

\begin{proof}
	Let $\varepsilon>0$ be a number to be chosen later. According to Lemma \ref{lem3.1}, there exist $\delta>0$ and
	a $p$-harmonic function $v$ such that $0\in \mathscr{C}(v)$ and
	\begin{equation}\label{33bijintiaohe}
		\sup\limits_{x\in B_{1/2}}|u(x)-v(x)|\leq \varepsilon.
	\end{equation}
	Applying the $C_{\rm loc}^{1,\alpha_{0}}$ regularity for $v$ (see Remark \ref{ptiaohezhengzexing}), together with $v(0)=|D v(0)|=0$, we have
	\begin{equation*}
		\sup\limits_{x\in B_{\rho}}|v(x)|\leq C\rho^{1+\alpha_{0}}\quad {\rm for\;any}\;\rho\in\left(0,\frac{1}{2}\right)
	\end{equation*}
	with a universal constant $C>0$.
	This along with \eqref{33bijintiaohe} yields that
	\begin{equation}\label{33zhengzexingdiyibu}
		\sup\limits_{x\in B_{\rho}}\Abs{u(x)}\leq \sup\limits_{x\in B_{\rho}}\Abs{u(x)-v(x)}+\sup\limits_{x\in B_{\rho}}\Abs{v(x)}\leq \varepsilon+C\rho^{1+\alpha_{0}}.
	\end{equation}	
	Now, fixed an exponent $\alpha<\alpha_{0}$, we make the following universal choices:
	\begin{equation}\label{choice}
		\rho\in \left(0,\min\left\{\frac{1}{2},\left(\frac{1}{2C}\right)^{{\alpha}_{0}-\alpha}\right\}\right)\quad {\rm and}\quad  \varepsilon:=\frac{1}{2}\rho^{1+\alpha}.
	\end{equation}
Once we fix the value of $\varepsilon$ here, the quantity $\delta$ in Lemma \ref{lem3.1} is determined accordingly.
	A combination of \eqref{33zhengzexingdiyibu} and \eqref{choice} immediately yields the desired estimate \eqref{361}. This completes the proof.	
\end{proof}
\begin{lemma}\label{lem3.3}
	Assume \eqref{12}-\eqref{14} and \eqref{21}-\eqref{23} hold with $p_{\rm min}\geq 0$.
	Let  $u\in C(B_{1})$ be a normalized viscosity solution of \eqref{55mainmodel} with $u(0)=0$ and $\alpha$ be as in \eqref{3aerfadefanwei}.
	There exist constants $\delta>0$ and $0<\rho<\frac{1}{2}$ both of which are the same as
	those in Lemma \ref{lem3.2}, such that if
	\begin{equation*}
		|D u(0)|\leq \delta t^{\alpha}\quad {\rm for\;all}\;t\in (0,\rho],
	\end{equation*}
	then it holds that
	\begin{equation*}
		\sup\limits_{x\in B_{t}}\Abs{u(x)}\leq Ct^{1+\alpha}
	\end{equation*}
	for a universal constant $C>0$.
\end{lemma}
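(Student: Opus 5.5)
We will prove Lemma \ref{lem3.3} by iterating Lemma \ref{lem3.2} along the geometric radii $\rho^{k}$, rescaling at each step by $\rho^{k(1+\alpha)}$. The smallness assumption on $Du(0)$ in Lemma \ref{lem3.1} is replaced at scale $\rho^k$ by the hypothesis $|Du(0)|\leq \delta t^{\alpha}$ with $t=\rho^k$.

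The claim to prove by induction is that for every $k\in\mathbb{N}$,
\[
\sup_{B_{\rho^{k}}}|u|\leq \rho^{k(1+\alpha)}.
\]
We first reduce to $\max\{\|f\|_{L^\infty(B_1)},\|h\|_{L^\infty(B_1)}\}\leq\delta$ by Remark \ref{scaling}. Choosing $\tau$ small makes $\tilde f,\tilde h$ small while keeping \eqref{23} and $\|\tilde u\|_\infty\leq 1$. The gradient condition is preserved up to a factor $\tau\le1$, and this factor only helps. The case $k=1$ is then Lemma \ref{lem3.2}, since $|Du(0)|\leq\delta\rho^{\alpha}\leq\delta$.

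For the inductive step, assume the claim for $k$ and set
\[
u_k(x):=\frac{u(\rho^{k}x)}{\rho^{k(1+\alpha)}}.
\]
Then $\|u_k\|_{L^\infty(B_1)}\leq1$, $u_k(0)=0$, and $|Du_k(0)|=\rho^{-k\alpha}|Du(0)|\leq\delta$ by the hypothesis at $t=\rho^k$. The function $u_k$ solves an equation of the same type with
\[
\Phi_k(\xi,x)=\rho^{-k\alpha p_k(x)}\Phi(\rho^{k\alpha}\xi,\rho^kx),\quad h_k(x)=\rho^{k(1-\alpha(1+p_k(x)-m))}h(\rho^kx),\quad f_k(x)=\rho^{k(1-\alpha(1+p_k(x)))}f(\rho^kx),
\]
exactly as in Step 2 of Proposition \ref{prop3.4}.

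\textbf{Main obstacle.} We must check that $h_k,f_k$ stay $\le\delta$ in sup norm and still satisfy \eqref{23}, uniformly in $k$. Here $\alpha$ may exceed $\frac{1}{1+q_{\rm max}}$, so the exponent $1-\alpha(1+q_{\rm max})$ can be negative. This is where the vanishing rates must be used. For $|x|<1$,
\[
|f_k(x)|\leq K_3\rho^{k(1+\theta_1-\alpha(1+p_k(x)))}|x|^{\theta_1},\qquad |h_k(x)|\leq K_4\rho^{k(1+\theta_2-\alpha(1+p_k(x)-m))}|x|^{\theta_2}.
\]
Because $\alpha\le\alpha_1$ and $p_k\le q_{\rm max}$, both exponents are nonnegative, since $\alpha(1+q_{\rm max})\le1+\theta_1$ and $\alpha(1+q_{\rm max}-m)\le1+\theta_2$. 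The powers of $\rho$ are therefore at most $1$. So $f_k,h_k$ obey \eqref{23} with the same constants and are bounded by $\max\{K_3,K_4\}$, which is $\le\delta$ after the initial normalization. A little care is needed because $p_k$ depends on $x$, but only the worst case $p_k=q_{\rm max}$ matters.

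With these bounds Lemma \ref{lem3.2} applies to $u_k$ with the same $\rho,\delta$. It gives $\sup_{B_\rho}|u_k|\le\rho^{1+\alpha}$, and scaling back yields the claim for $k+1$.

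Finally, for $t\in(0,\rho]$ pick $k$ with $\rho^{k+1}<t\le\rho^k$. Then
\[
\sup_{B_t}|u|\le\rho^{k(1+\alpha)}\le\rho^{-(1+\alpha)}t^{1+\alpha},
\]
so the lemma holds with $C=\rho^{-(1+\alpha)}$.
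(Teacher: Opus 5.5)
Your proposal is correct and follows the paper's proof essentially step for step. You normalize via Remark \ref{scaling}, iterate Lemma \ref{lem3.2} on $u_k(x)=u(\rho^k x)/\rho^{k(1+\alpha)}$ using the vanishing rates \eqref{23} to control $f_k,h_k$ uniformly in $k$, and then interpolate between the radii $\rho^{k+1}<t\le\rho^k$, which gives $C=\rho^{-(1+\alpha)}$.

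Your handling of the exponent issue is in fact more careful than the paper's appeal to \eqref{smalljiashe}. When $\alpha(1+q_{\rm max})>1$, only the smallness of the constants in \eqref{23}, and not that of $\|f\|_{L^\infty},\|h\|_{L^\infty}$, controls $\|f_k\|_{L^\infty}$ and $\|h_k\|_{L^\infty}$. Your normalization step should therefore say this explicitly rather than only shrinking sup norms: choose $\tau$ with $K_3\tau^{2+p_{\rm min}+\theta_1}\le\delta$ and $K_4\tau^{2+p_{\rm min}-m+\theta_2}\le\delta$, which the computation in Remark \ref{scaling} permits.
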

\begin{proof}
	Initially we note that, by making use of the scaling feature presented in Remark \ref{scaling}, we may suppose that
	\begin{equation}\label{smalljiashe}
		\max\left\{\|f\|_{L^{\infty}(B_{1})},\|h\|_{L^{\infty}(B_{1})}\right\}\leq \delta
	\end{equation}
	for constant $\delta>0$ coming from Lemma \ref{lem3.2}. In fact, we define rescaled function $\tilde{u}:B_{1}\rightarrow \mathbb{R}$ by $\tilde{u}(x)=\frac{u(\tau x)}{K}$
	with
	\begin{equation*}
		K:=1+\|u\|_{L^{\infty}\left(B_{1}\right)}
	\end{equation*}
	and
	\begin{equation*}
		\tau:=
		\min\left\{1,\left(\frac{\delta}{\|f\|_{L^{\infty}(B_{1})}}\right)^{\frac{1}{2+p_{\rm min}}},\left(\frac{\delta}{\|h\|_{L^{\infty}(B_{1})}}\right)^{\frac{1}{2+p_{\rm min}-m}}\right\}.
	\end{equation*}
	Then $\tilde{u}$ satisfies \eqref{3shensuofangcheng} with $\tilde{u}(0)=0$ and $\|\tilde{u}\|_{L^{\infty}\left(B_{1}\right)}\leq 1$.
	In addition, a direct calculation yields that	
	\begin{equation*}
		\|\tilde{f}\|_{L^{\infty}\left(B_{1}\right)}\leq \frac{\tau^{2+p_{\rm min}}}{K^{1+p_{\rm min}}}\|{f}\|_{L^{\infty}\left(B_{1}\right)},
		\quad
		\|\tilde{h}\|_{L^{\infty}\left(B_{1}\right)}\leq \frac{\tau^{2-m+p_{\rm min}}}{K^{1-m+p_{\rm min}}}\|{h}\|_{L^{\infty}\left(B_{1}\right)}.
	\end{equation*}
	Then it follows from the choice of $K$ and $\tau$ above that
	\begin{equation*}
		\max\left\{\|\tilde{f}\|_{L^{\infty}\left(B_{1}\right)},\|\tilde{h}\|_{L^{\infty}\left(B_{1}\right)}\right\}
		\leq \delta.
	\end{equation*}		
	
	The strategy of the proof is to iterate Lemma \ref{lem3.2}.
	That is, we need to justify that for
	all $k\in\mathbb{N}$, it holds that	if
	\begin{equation*}
		|D u(0)| \leq \delta \rho^{k\alpha},
	\end{equation*}
	then
	\begin{equation}\label{3111aaa}
		\sup\limits_{x\in B_{\rho^{k}}}\Abs{u(x)}\leq \rho^{k(1+\alpha)}.
	\end{equation}
	We argue by finite induction. For $k=1$, \eqref{3111aaa} follows immediately from Lemma \ref{lem3.2}. Suppose that \eqref{3111aaa} holds true for $j=1,2,...,k$. Now we are going to prove \eqref{3111aaa} for $j=k+1$. To this end, we define  $u_{k}:B_{1}\rightarrow \mathbb{R}$ by
	\begin{equation*}
		u_{k}(x):=\frac{u\left(\rho^{k}x\right)}{\rho^{k(1+\alpha)}}.
	\end{equation*}
	Then $u_{k}$ is a viscosity solution of
	\begin{equation*}
		-\Phi_{k}(D{u_{k}}, x)\Delta_{p}^{N}{u_{k}}+{h_{k}}(x)\Abs{D{u_{k}}}^{m}= {f_{k}}(x) \quad \text{in} \quad  B_{1},
	\end{equation*}
	where
	\begin{align*}
		{\Phi_{k}}(\xi,x):=&\rho^{-k\alpha p_{k}(x)}\Phi\left(\rho^{k\alpha}\xi,\rho^{k}x\right)\;{\rm satisfies}\;\eqref{12}\;{\rm and} \; \eqref{13}\;{\rm with}\\
		p_{k}(x)&:=p(\rho^{k}x),\quad q_{k}(x):=q(\rho^{k}x), \quad
		a_{k}(x):=\rho^{k\alpha(q_{k}(x)-p_{k}(x))}a(\rho^{k}x),\\
		{h_{k}}(x):=&\rho^{k(1-\alpha(1+p_{k}(x)-m))}h(\rho^{k}x)
		,\quad
		{f_{k}}(x):=\rho^{k(1-\alpha(1+p_{k}(x)))}f(\rho^{k}x).
	\end{align*}
	Moreover, $0\leq p_{\rm min}\leq p_{k}(x)\leq q_{k}(x)\leq q_{\rm max}<\infty.$
	Applying \eqref{23} and $\alpha\leq  \min\left\{\frac{1+\theta_{1}}{1+q_{\rm max}},\frac{1+\theta_{2}}{1+q_{\rm max}-m}\right\}$ to obtain
	\begin{equation}\label{331}
		|{f_{k}}(x)|\leq K_{3} \rho^{k(1-\alpha(1+p_{k}(x))+\theta_{1})}|x|^{\theta_{1}}
		\leq K_{3} \rho^{k(1-\alpha(1+q_{\rm max})+\theta_{1})}|x|^{\theta_{1}}\leq K_{3}|x|^{\theta_{1}},
	\end{equation}
	\begin{equation}\label{332}
		|{h_{k}}(x)|\leq  K_{4} \rho^{k(1-\alpha(1+p_{k}(x)-m)+\theta_{2})}|x|^{\theta_{2}}
		\leq K_{4} \rho^{k(1-\alpha(1+q_{\rm max}-m)+\theta_{2})}|x|^{\theta_{2}}\leq K_{4}|x|^{\theta_{2}}.
	\end{equation}
	Via the hypotheses of induction, we have $|D u(0)| \leq \delta \rho^{k\alpha}$, and then it follows
$$|D u_{k}(0)|=\rho^{-k\alpha}|Du(0)| \leq \delta.$$ 
At this point, in view of the assumption \eqref{smalljiashe}, $u_{k}$ falls into the framework of Lemma \ref{lem3.2}, and hence it yields
	\begin{equation*}
		\sup\limits_{x\in B_{\rho}}\Abs{u_{k}(x)}\leq \rho^{1+\alpha}.
	\end{equation*}
	Scaling back, we reach that
	\begin{equation*}
		\sup\limits_{x\in B_{\rho^{k+1}}}\Abs{u(x)}\leq \rho^{(k+1)(1+\alpha)}.
	\end{equation*}
	By now, the proof of \eqref{3111aaa} is finished.
	
	Finally, for $t\in(0,\rho]$, there exists an integer $j\in \mathbb{N}$ such that $\rho^{j+1}<t\leq \rho^{j}$,
	applying  \eqref{3111aaa} to obtain
	\begin{equation*}
		\sup\limits_{x\in B_{t}}\Abs{u(x)}\leq
		\sup\limits_{x\in B_{\rho^{j}}}\Abs{u(x)}\leq \rho^{j(1+\alpha)}\leq \rho^{-(1+\alpha)}t^{1+\alpha},
	\end{equation*}
	provided $|Du(0)|\leq \delta t^{\alpha}$.
	This completes the proof of the desired result.
\end{proof}
\subsection{Proof of Theorem \ref{main}}
In the end, with the help of Lemma \ref{lem3.3}, we are in a position to complete the proof of Theorem \ref{main}.
\begin{proof}[Proof of Theorem \ref{main}] As previously explained, we assume that $u(0)=0$, and $\|{u}\|_{L^{\infty}\left(B_{1}\right)}\leq 1$
	by translation and normalization. We are going to
	prove this conclusion by implementing a dichotomy argument, which is divided into two cases. Let $\delta,\rho>0$ be two universal constants coming from Lemma \ref{lem3.3}.
	
	{\bf Case 1.} $|Du(0)|\leq \delta \rho^{\alpha}$. Set
	\begin{equation}\label{31112changshujifa}
		\kappa:=\left(\frac{|Du(0)|}{\delta}\right)^{1/\alpha}.
	\end{equation}
	Given $0<t\leq \rho$. We consider the following two subcases.\\
	{\bf Case 1.1} $\kappa\leq t\leq \rho$. It follows from \eqref{31112changshujifa} that
	\begin{equation*}
		|Du(0)|=\delta \kappa^{\alpha} \leq \delta t^{\alpha}.
	\end{equation*}
	Consequently, exploiting Lemma \ref{lem3.3} to obtain
	\begin{equation*}
		\sup\limits_{x\in B_{t}}\Abs{u(x)}\leq Ct^{1+\alpha}
	\end{equation*}
	for a universal constant $C>0$. Combining the last two displays with the triangle inequality, we derive
	\begin{equation*}
		\sup\limits_{x\in B_{t}}\Abs{u(x)-Du(0)\cdot x}\leq
		\sup\limits_{x\in B_{t}}\Abs{u(x)}+|Du(0)|t\leq (C+\delta)t^{1+\alpha},
	\end{equation*}
	which means that $u$ is of class $C^{1,\alpha}$ at 0.\\
	{\bf Case 1.2} $0<t<\kappa\leq \rho$. In this case, we consider the scaled function
	\begin{equation*}
		u_{\kappa}(x):=\frac{u(\kappa x)}{\kappa^{1+\alpha}}, \quad x\in B_{1}.
	\end{equation*}
	It follows from \eqref{31112changshujifa} that
	$|Du(0)|=\delta \kappa^{\alpha}$. Again, we employ Lemma \ref{lem3.3} to derive
	\begin{equation*}
		\sup\limits_{x\in B_{\kappa}}\Abs{u(x)}\leq C\kappa^{1+\alpha}.
	\end{equation*}
	Then it follows that
	\begin{equation*}
		\sup\limits_{x\in B_{1}}\Abs{u_{\kappa}(x)}=\sup\limits_{x\in B_{1}}\Abs{\frac{u(\kappa x)}{\kappa^{1+\alpha}}}=\frac{1}{\kappa^{1+\alpha}}\sup\limits_{x\in B_{\kappa}}\Abs{u(x)}
		\leq C.
	\end{equation*}
	Therefore, we can readily examine that $u_{\kappa}\in C(B_{1})$ is a bounded viscosity solution to
	\begin{equation} \label{switchequation}
		-{\Phi}_{\kappa}(D{u}_{\kappa}, x)	\Delta_{p}^{N}{u}_{\kappa}+	{h}_{\kappa}(x)\Abs{D{u}_{\kappa}}^{m}= 	{f}_{\kappa}(x) \quad \text{in} \quad  B_{1},
	\end{equation}
	where 
	\begin{align*}
		{{\Phi}_{\kappa}}(\xi,x):=&\kappa^{-\alpha {p}_{\kappa}(x)}\Phi\left(\kappa^{\alpha}\xi,\kappa x\right)\;{\rm satisfies}\;\eqref{12}\;{\rm and}\; \eqref{13}\;{\rm with}\\
		{p}_{\kappa}(x)&:=p(\kappa x),\quad {q}_{\kappa}(x):=q(\kappa x), \quad
		{a}_{\kappa}(x):=\kappa^{\alpha({q}_{\kappa}(x)-p_{\kappa}(x))}a(\kappa x),\\
		{h}_{\kappa}(x):=&\kappa^{1-\alpha(1+p_{\kappa}(x)-m)}h(\kappa x),\quad
		{f}_{\kappa}(x):=\kappa^{1-\alpha(1+p_{\kappa}(x))}f(\kappa x).
	\end{align*}	
	By the same arguments as in \eqref{331} and \eqref{332}, we can deduce that
	\begin{equation*}
		|{f}_{\kappa}(x)|\leq K_{3}|x|^{\theta_{1}},\quad |{h}_{\kappa}(x)|\leq K_{4}|x|^{\theta_{2}}.
	\end{equation*}
	Since the source term $f_{\kappa}$ and function $h_{\kappa}$ have a universal bound, the interior
	$C^{1,\alpha^{\prime}}$-regularity for $u_{\kappa}$ follows from Theorem \ref{thm1}. In addition, let us recall the fact that
	$|Du_{\kappa}(0)|=\Abs{\frac{Du(0)}{\kappa^{\alpha}}}=\delta>0.$ Since $u_{\kappa}\in C_{\rm loc}^{1,\alpha^{\prime}}(B_{1})$, there exists a universal constant $C> 0$ such that for all $x\in B_{1/2}$,
	$$\Abs{Du_{\kappa}(x)-Du_{\kappa}(0)}\leq C|x|^{\alpha^{\prime}},$$
	and further
	$$ \delta-C|x|^{\alpha^{\prime}}\leq \Abs{Du_{\kappa}(x)}\leq \delta+C|x|^{\alpha^{\prime}}.$$
	Then we may take a small universal radius $r>0$, independent of $\kappa$, such that
	$$\frac{\delta}{2}\leq |Du_{\kappa}(x)| \leq 2\delta \quad {\rm in}\;\, B_{r}.$$
	Hence, $u_{\kappa}$ satisfies the following equation with a universally bounded source term on the right-hand side
	\begin{equation*}
		-\Delta_{p}^{N}u_{\kappa}=\Phi_{\kappa}({Du_{\kappa}}, x)^{-1}\bigg[f_{\kappa}(x)- h_{\kappa}(x)\Abs{Du_{\kappa}}^{m} \bigg] \quad \text{in} \quad  B_{r}.
	\end{equation*}
	As a consequence, by virtue of the regularity results in Lemma \ref{111tiaohezhengzexing}, we know $u_{\kappa}\in C_{\rm loc}^{1,\alpha_{0}^{-}}(B_{r})$. Then it follows that
	\begin{equation*}
		\sup\limits_{x\in B_{\iota}}\Abs{u_{\kappa}(x)-Du_{\kappa}(0)\cdot x}\leq C\iota^{1+\alpha}
	\end{equation*}
	for every $0<\iota\leq \frac{r}{2}$ and $0<\alpha<\alpha_{0}$. Scaling back, it yields that
	\begin{equation}\label{3zuoquyujielunchengli}
		\sup\limits_{x\in B_{t}}\Abs{u(x)-Du(0)\cdot x}\leq Ct^{1+\alpha}
	\end{equation}
	for every $0<t\leq \frac{\kappa r}{2}$. It remains to show that the claim \eqref{3zuoquyujielunchengli} also holds on interval $\left(\frac{\kappa r}{2},\kappa\right)$. When $t\in\left(\frac{\kappa r}{2},\kappa\right)$, we apply Case 1.1 with $t=\kappa$ to arrive at
	\begin{equation*}
		\begin{split}
			\sup\limits_{x\in B_{t}}\Abs{u(x)-Du(0)\cdot x}&\leq \sup\limits_{x\in B_{\kappa}}\Abs{u(x)-Du(0)\cdot x}\\
			& \leq C\kappa^{1+\alpha}\\
			&=C \left(\frac{2}{r}\right)^{1+\alpha}\left(\frac{\kappa r}{2}\right)^{1+\alpha}\\
			&\leq C \left(\frac{2}{r}\right)^{1+\alpha}t^{1+\alpha}.
		\end{split}
	\end{equation*}
	At this point, we complete the proof of Case 1.
	
	{\bf Case 2.} $|Du(0)|>\delta \rho^{\alpha}$. In this scenario, we consider an auxiliary function
	$$\hat{u}(x):=\frac{\delta \rho^{\alpha}}{|Du(0)|}u(x)$$
	and then $|D\hat{u}(0)|=\delta \rho^{\alpha}$, which is back to Case 1. The proof is complete now.
\end{proof}
\section{Schauder-type regularity}\label{section666}
In order to give the proof of Theorem \ref{3aamain2}, we start off with a new flatness improvement result at the local extrema.
\begin{lemma}\label{3aalem4.1}
 Let  $u\in C(B_{1})$ be a normalized viscosity solution of \eqref{55mainmodel} under assumptions \eqref{12}-\eqref{14} and \eqref{21}-\eqref{23} with $p_{\rm min}\geq 0$. Assume that $x_{0}\in B_{1/2}$ is a local extremum, i.e., $u(x_{0})\leq u(x)$ or $u(x_{0})\geq u(x)$ in $B_{\nu}(x_{0})$ for some $\nu\in\left(0,\frac{1}{4}\right)$.  Given $\varepsilon>0$, there exists $\delta>0$ such that if
	\begin{equation*}
		\max\left\{\|f\|_{L^{\infty}(B_{1})},\|h\|_{L^{\infty}(B_{1})}\right\}\leq\delta,
	\end{equation*}
	then it holds
	\begin{equation*}
		\sup\limits_{x\in B_{\nu}(x_{0})}\Abs{u(x)-u(x_0)}\leq \varepsilon.
	\end{equation*}
\end{lemma}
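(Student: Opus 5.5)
We argue by contradiction and compactness, in the spirit of Lemmas \ref{lem5.1} and \ref{lem3.1}. Suppose the statement fails. Then there are $\varepsilon_{0}>0$ and sequences $\Phi_{j}$, $h_{j}$, $f_{j}$, $u_{j}$ with the following properties. Each $u_{j}$ is a normalized viscosity solution of
\[
-\Phi_{j}(Du_{j},x)\Delta_{p}^{N}u_{j}+h_{j}(x)\abs{Du_{j}}^{m}=f_{j}(x)\quad \text{in } B_{1},
\]
with $\Phi_{j}$ satisfying \eqref{12}--\eqref{14} uniformly in $j$ (same $K_{1},K_{2},p_{\rm min},q_{\rm max}$) and $\max\{\|f_{j}\|_{L^\infty},\|h_{j}\|_{L^\infty}\}\le 1/j$. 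Each $x_{j}\in B_{1/2}$ is a local extremum of $u_{j}$ on $B_{\nu}(x_{j})$. Passing to a subsequence, we may assume they are all minima, say; the maximum case is symmetric, replacing $u$ by $-u$, which is allowed since $\Delta^N_p$ is odd and $h$, $f$ change sign accordingly. Finally, $\sup_{B_{\nu}(x_{j})}\abs{u_{j}-u_{j}(x_{j})}>\varepsilon_{0}$.

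By Theorem \ref{thm1}, since $\mathcal{K}=\|f_j\|$ and $\mathcal{M}=\|h_j\|$ are uniformly bounded, the $u_{j}$ are bounded in $C^{1,\alpha'}(\overline{B}_{3/4})$. Arzel\`a--Ascoli gives $u_{j}\to u_{\infty}$ in $C^{1}(\overline{B}_{3/4})$ and $x_{j}\to x_{\infty}\in\overline{B}_{1/2}$. Exactly as in the proof of Lemma \ref{bijin} (bounded case with $\xi_j=0$), including the treatment of test functions with vanishing gradient, $u_{\infty}$ is a viscosity solution of $-\Delta_{p}^{N}u_{\infty}=0$ in $B_{3/4}$. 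By Lemma \ref{dengjia}, $u_{\infty}$ is therefore $p$-harmonic there.

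Uniform convergence lets us pass the extremum property to the limit. For $y\in B_{\nu}(x_{\infty})$ we have $y\in B_{\nu}(x_{j})$ for large $j$, so $u_{j}(y)\ge u_{j}(x_{j})$, and letting $j\to\infty$ gives $u_{\infty}(y)\ge u_{\infty}(x_{\infty})$. Thus $u_{\infty}$ attains an interior local minimum at $x_{\infty}$ on $B_{\nu}(x_{\infty})\subset B_{3/4}$. Since $\nu<1/4$ and $\abs{x_\infty}\le 1/2$, the ball $B_{\nu}(x_\infty)$ sits compactly in $B_{3/4}$, and even its closure lies inside $B_{3/4}$ if we shrink slightly. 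The strong minimum principle for $p$-harmonic functions (weak solutions) then forces $u_{\infty}$ to be constant on $B_{\nu}(x_{\infty})$, and hence on the connected ball $B_{3/4}$ by analyticity-free arguments: only the ball itself is needed.

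It remains to reach the contradiction. For large $j$, $B_{\nu}(x_{j})\subset B_{\nu+\abs{x_j-x_\infty}}(x_{\infty})$. Applying the strong minimum principle on a slightly larger ball $B_{\nu'}(x_{\infty})$ with $\nu<\nu'<1/4$ requires the extremum on that ball. To avoid this, we instead use that $u_{\infty}$ is constant on all of the connected component of $B_{3/4}$ containing $x_{\infty}$. This follows because the set where $u_\infty=u_\infty(x_\infty)$ attains its minimum is open and closed relative to the region, provided the minimum is global there. The cleanest route is to apply the strong minimum principle on $B_{\nu}(x_{\infty})$ and then use unique continuation for constants: a $p$-harmonic function that is constant on an open subset need not be constant elsewhere in general, so this is the \textbf{main obstacle}. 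I would handle it by working only on $B_{\nu}(x_{\infty})$ and using uniform convergence: $\sup_{B_{\nu}(x_{j})}\abs{u_{j}-u_{j}(x_{j})}\le \sup_{B_{\nu}(x_{\infty})}\abs{u_{\infty}-u_\infty(x_\infty)}+o(1)+\omega(\abs{x_{j}-x_{\infty}})$, where $\omega$ is the uniform Lipschitz modulus from Theorem \ref{thm1}, controlling the thin shell $B_{\nu}(x_j)\setminus B_\nu(x_\infty)$. The first term vanishes, and the shell contribution is $O(\abs{x_j-x_\infty})$ by the uniform gradient bound together with the continuity of $u_\infty$ up to $\partial B_\nu(x_\infty)$, which carries the constant value there. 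Hence the left side tends to $0$, contradicting $\varepsilon_{0}>0$.
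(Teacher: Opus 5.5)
Your argument is correct and is essentially the paper's proof: contradiction, $C^1$ compactness from Theorem \ref{thm1}, identification of the limit as $p$-harmonic via the argument of Lemma \ref{bijin} and Lemma \ref{dengjia}, passage of the minimum to $x_\infty$, and V\'azquez's strong minimum principle. Your last step is more careful than the paper's bare assertion that $u_\infty$ is constant: you conclude constancy only on $B_\nu(x_\infty)$ and control the shell $B_\nu(x_j)\setminus B_\nu(x_\infty)$ by uniform convergence plus equicontinuity, which is exactly what is needed, so you should delete the intermediate claims that $u_\infty$ is constant on all of $B_{3/4}$, since those would require a unique continuation property that is not available for $p$-harmonic functions.
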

\begin{proof}
	With no loss of generality, let us assume $u(x_{0})$ is a local minimum. The proof is based on a contradiction argument. If the claim fails, then there exist $\varepsilon_{0}>0$ and sequences
	of functions $\{\Phi_{j}\}_{j\in \mathbb{N}}$, $\{f_{j}\}_{j\in \mathbb{N}}$, $\{u_{j}\}_{j\in \mathbb{N}}$, $\{h_{j}\}_{j\in \mathbb{N}}$, and a sequence $\{x_{j}\}_{j\in\mathbb{N}}\subset B_{1/2}$ of local minimum point of $u_{j}$ such that
	\begin{equation*}
		-\Phi_{j}({Du_{j}}, x)\Delta_{p}^{N}u_{j}+h_{j}(x)\abs{Du_{j}}^{m} =f_{j}(x) \quad  \text{in} \quad B_{1}
	\end{equation*}	
	with $ \|u_{j}\|_{L^{\infty}(B_{1})}\leq 1$ and
	\begin{equation*}
		\max\left\{\|f_{j}\|_{L^{\infty}(B_{1})},\|h_{j}\|_{L^{\infty}(B_{1})}\right\}\leq \frac{1}{j},
	\end{equation*}
	where $\Phi_{j}$ satisfies \eqref{12} and \eqref{13} with
	$$0\leq p_{\rm min}\leq p_{j}(x)\leq q_{j}(x)\leq q_{\rm max}<\infty,$$
	$$p_{j}(\cdot),q_{j}(\cdot)\in C(B_{1}) \quad {\rm and}\quad 0\leq a_{j}(\cdot)\in C(B_{1}).$$
	However, it holds
	\begin{equation}\label{42maodun}
		\sup\limits_{x\in B_{\nu}(x_{j})}\Abs{u_{j}(x)-u_{j}(x_j)}>\varepsilon_{0}\quad {\rm for\; any \;} j\in \mathbb{N}.
	\end{equation}
	
	By the same arguments as in Lemma \ref{lem3.1}, we can conclude that
	$u_{\infty}$ is a weak solution of
	\begin{equation*}
		-\Delta_{p}u_{\infty}=0 \quad {\rm in}\quad  B_{3/4}.
	\end{equation*}
	Additionally, $x_{j}\rightarrow x_{\infty}$. Since $u_{j}$ converges locally uniformly in $B_{1}$ to $u_{\infty}$,  we know that $x_{\infty}$ is a
	local minimum of $u_{\infty}$. Thus, by applying the strong maximum principle from \cite{Vazquez1984AMO}, we deduce that $u_{\infty}\equiv const$. This clearly leads to a contradiction with \eqref{42maodun} for $j$ sufficiently large. 
\end{proof}
With the aid of Lemma \ref{3aalem4.1}, we now show Hessian continuity of solution to \eqref{55mainmodel} at local extrema points.
\begin{proof}[Proof of Theorem \ref{3aamain2}]
	As argued before, we can assume that $u(0)=0$, and $\|{u}\|_{L^{\infty}\left(B_{1}\right)}\leq 1$
	by translation and normalization. Let us also assume 0 is a local minimum of $u$. For constant $0<\varrho<1$ to be chosen later, consider the scaled function
	$$w_{1}(x):=u(\varrho x),\quad x\in B_{1}.$$
	Then we immediately verify that $w_{1}$ solves
	\begin{equation*}
		-\Phi_{\varrho}({D{w_{1}}}, x)\Delta_{p}^{N}{w_{1}}+{h_{\varrho}}(x)\Abs{D{w_{1}}}^{m}= {f_{\varrho}}(x) \quad \text{in} \quad  B_{1}
	\end{equation*}
	in the viscosity sense, where
	\begin{align*}
		{\Phi_{\varrho}}(\xi,x):=&\varrho^{ p_{\varrho}( x)}\Phi\left(\varrho^{-1}\xi,\varrho x\right)\;{\rm satisfies}\;\eqref{12}\;{\rm and}\;\eqref{13}\;{\rm with}\\
		p_{\varrho}(x):&=p(\varrho x),\quad q_{\varrho}(x):=q(\varrho x),\quad
		a_{\varrho}(x):=\varrho^{p_{\varrho}(x)-q_{\varrho}(x)}a(\varrho x),\\
		{h_{\varrho}}(x):=&\varrho^{2+p_{\varrho}( x)-m}h(\varrho x),\quad
		{f_{\varrho}}(x):=\varrho^{2+p_{\varrho}( x)}f(\varrho x).
	\end{align*}		
	Moreover, $0\leq p_{\rm min}\leq p_{\varrho}(x)\leq q_{\varrho}(x)\leq q_{\rm max}<\infty.$ It follows from \eqref{23} that
	\begin{align*}
		|{f_{\varrho}}(x)|&\leq K_{3} \varrho^{2+p_{\rm min}+\theta_{1}}|x|^{\theta_{1}}\leq K_{3} \varrho^{2+p_{\rm min}+\theta_{1}}, \\
		|{h_{\varrho}}(x)|&\leq K_{4} \varrho^{2+p_{\rm min}-m+\theta_{2}}|x|^{\theta_{2}}\leq K_{4} \varrho^{2+p_{\rm min}-m+\theta_{2}}.
	\end{align*}
	Now, we fix $\varepsilon:=\nu^{2+\mu}$ and let $\delta>0$ be the corresponding smallness condition from Lemma \ref{3aalem4.1}. We proceed by selecting $\varrho$
	\begin{equation}\label{canshuxuanqu}
		0<\varrho\leq \min\left\{\left(\frac{\delta}{K_{4}}\right)^{1/(2+p_{\rm min}-m+\theta_{2})},\left(\frac{\delta}{K_{3}}\right)^{1/(2+p_{\rm min}+\theta_{1})}\right\}.
	\end{equation}
	With such choice, $w_{1}$ is under the hypotheses of Lemma \ref{3aalem4.1}, and hence it yields
	\begin{equation}\label{41}
		\sup\limits_{x\in B_{\nu}}\Abs{w_{1}(x)}\leq \nu^{2+\mu}.
	\end{equation}
	
	As usual, we argue by induction to show that
	\begin{equation}\label{disanhanshu}
		\sup\limits_{x\in B_{\nu^{k}}}\Abs{w_{1}(x)}\leq \nu^{k(2+\mu)}
	\end{equation}
for all $k\in \mathbb{N}$. The initial induction hypothesis, $k=1$, is precisely  \eqref{41}. Assume \eqref{disanhanshu} has been verified for $j=k$. Now we prove that \eqref{disanhanshu} holds for $j=k+1$. Let us define $$w_{k}(x):=\nu^{-k(2+\mu)}w_{1}(\nu^{k}x),\quad x\in B_{1}.$$
	It is clear that 0 is still a local minimum for $w_{k}$. By induction assumption, we have $$\|w_{k}\|_{L^{\infty}(B_{1})}\leq 1.$$ Moreover, it is easy to verify that $w_{k}$ solves
	\begin{equation*}
		-\hat{\Phi}_{{\varrho}}({D w_{k}}, x)\Delta_{p}^{N}w_{k}+{\hat{h}_{\varrho}}(x)\Abs{Dw_{k}}^{m}= {\hat{f}_{\varrho}}(x) \quad \text{in} \quad  B_{1}
	\end{equation*}
	in the viscosity sense, where
	\begin{align*}
		{\hat{\Phi}}_{\varrho}(\xi,x):=&\nu^{-k(1+\mu)\hat{p}_{\varrho}(x)}\Phi_{\varrho}\left(\nu^{k(1+\mu)}\xi,\nu^{k}x\right) \;{\rm satisfies}\;\eqref{12}\;{\rm and}\;\eqref{13}\;{\rm with}\\
		\hat{p}_{\varrho}(x)&:={p}_{\varrho}(\nu^{k}x),\quad \hat{q}_{\varrho}(x):={q}_{\varrho}(\nu^{k}x), \quad
		\hat{a}_{\varrho}(x):=\nu^{k(1+\mu)(\hat{q}_{\varrho}(x)-\hat{p}_{\varrho}(x))}{a}_{\varrho}(\nu^{k}x),\\
		\hat{h}_{\varrho}(x):=&\nu^{-k\mu(1+\hat{p}_{\varrho}(x)-m)-k(\hat{p}_{\varrho}(x)-m)}h_{\varrho}(\nu^{k}x),\quad
		\hat{f}_{\varrho}(x):=\nu^{-k\mu(1+\hat{p}_{\varrho}(x))-k\hat{p}_{\varrho}(x)}f_{\varrho}(\nu^{k}x).
	\end{align*}		
	Next, we estimate the bound of $\hat{h}_{{\varrho}}$ and $\hat{f}_{{\varrho}}$. Applying \eqref{23} and $\mu=\min\left\{\frac{\theta_{1}-q_{\rm max}}{1+q_{\rm max}},\frac{m+\theta_{2}-q_{\rm max}}{1+q_{\rm max}-m}\right\}$ to deduce that
	\begin{align*}
		|\hat{f}_{\varrho}(x)&\leq K_{3}\nu^{-k\mu(1+q_{\rm max})-k(q_{\rm max}-\theta_{1})}\varrho^{2+p_{\rm min}+\theta_{1}}\leq K_{3}\varrho^{2+p_{\rm min}+\theta_{1}},\\
		|\hat{h}_{{\varrho}}(x)|&\leq 
		K_{4}\nu^{-k\mu(1+q_{\rm max}-m)-k(q_{\rm max}-m-\theta_{2})}\varrho^{2+p_{\rm min}-m+\theta_{2}}
		\leq K_{4}\varrho^{2+p_{\rm min}-m+\theta_{2}}.
	\end{align*}
	At this point, in view of \eqref{canshuxuanqu}, $w_{k}$ falls into the framework of Lemma \ref{3aalem4.1}, and hence it yields
	\begin{equation}\label{dierhanshu}
		\sup\limits_{x\in B_{\nu}}\Abs{w_{k}(x)}\leq \nu^{2+\mu}.	
	\end{equation}
	Rescaling \eqref{dierhanshu} back to $w_{1}$ yields
	\begin{equation*}
		\sup\limits_{x\in B_{\nu^{(k+1)}}}\Abs{w_{1}(x)}\leq \nu^{(k+1)(2+\mu)}.
	\end{equation*}
	The induction argument is complete.
	
	Finally, given any $0<r\leq \nu\varrho$, there exists $k\in\mathbb{N}$, such that $\nu^{(k+1)}<\frac{r}{\varrho}\leq \nu^{k}$. Applying \eqref{disanhanshu} to obtain
	\begin{align*}
		\sup\limits_{x\in B_{r}}\Abs{u(x)}&= 	\sup\limits_{x\in B_{r/\varrho}}\Abs{w_{1}(x)}\leq \sup\limits_{x\in B_{\nu^{k}}}\Abs{w_{1}(x)}\\
		&\leq \nu^{k(2+\mu)}=\nu^{(k+1)(2+\mu)-2-\mu}\\
		&\leq \left(\frac{r}{\varrho}\right)^{2+\mu}\nu^{-2-\mu}\\
		&=\left(\nu\varrho\right)^{2+\mu}r^{2+\mu}:=Cr^{2+\mu},
	\end{align*}
	where $C=C(d,p,K_{3},K_{4},\theta_{1},\theta_{2},p_{\rm min},q_{\rm max},m,\nu)$. Therefore, $u$ is $C^{2,\mu}$ differentiable at 0, with $|Du(0)|=|D^{2}u(0)|=0$. The proof is complete now.
\end{proof}
\section*{Data availability} Data sharing is not applicable to this article as obviously no datasets were generated or analyzed during the current study.
\section*{Conflict of interest} Author states no conflict of interest.

\end{document}